\numberwithin{equation}{section}       
\numberwithin{equation}{section}
\theoremstyle{plain} 
\numberwithin{equation}{section}
\newtheorem{theorem}[equation]{Theorem}
\newtheorem{definition}[equation]{Definition}
\newtheorem{lemma}[equation]{Lemma}
\newtheorem{corollary}[equation]{Corollary}
\newtheorem{proposition}[equation]{Proposition}
\newtheorem{remark}[equation]{Remark}
\newcommand{\Z}{\mathbb{Z}}
\newcommand{\Q}{\mathbb{Q}}
\newcommand{\K}{\mathbf{k}}
\newcommand{\F}{\mathbb{F}}
\newcommand{\GL}{\mathbf{GL}}
\newcommand{\G}{\mathbb{G}}
\newcommand{\mcI}{\mathcal{I}}
\newcommand{\Filt}{\mathrm{Filt}}
\newcommand{\Vect}{\mathrm{Vect}}
\newcommand{\IC}{\mathrm{IC}}
\newcommand{\m}{\mathfrak{m}}
\newcommand{\locring}{\mathcal{O}}
\newcommand{\qelbar}{\overline{\Q}_{\ell}}
\newcommand{\weyl}[1]{\ensuremath{\widetilde{W}_{#1}}}
\newcommand{\iwahorihecke}[1]{\ensuremath{\mathcal{H}_{I_{#1}}}}
\newcommand{\flagvar}[1]{\ensuremath{\mathcal{F}l_{#1}}}
\newcommand{\heckefunc}[1]{\ensuremath{\overset{\leftarrow}{H}_{#1}}}
\newcommand{\heckefuncr}[1]{\ensuremath{\overset{\rightarrow}{H}_{#1}}}
\newcommand{\I}[1]{\ensuremath{I_{#1}}}
\newcommand{\Gstrat}{{}_{{}_{s_{1},s_{2}}}G(F)}
\newcommand{\Flagstrat}{{}_{{}_{s_{1},s_{2}}}\mathcal{F}l_{G}}
\newcommand{\newtimes}{\tilde{\times}}
\newcommand{\bt}{\tilde{\boxtimes}}
\newcommand{\iso}{\widetilde{\longrightarrow}}
\newcommand{\Pstrat}{{}_{{}_{s_{1},s_{2}}}P(F)}
\newcommand{\FlagstratP}{{}_{{}_{s_{1},s_{2}}}\mathcal{F}l_{P}}
\newcommand{\Lstrat}{{}_{{}_{s_{1},s_{2}}}L(F)}
\newcommand{\FlagstratL}{{}_{{}_{s_{1},s_{2}}}\mathcal{F}l_{L}}
\newcommand{\isom}{{\widetilde\to}}
\newcommand{\on}{\operatorname}
\newcommand{\Hom}{\on{Hom}}
\newcommand{\dimrel}{\on{dim.rel}}
\newcommand{\Fl}{{\mathcal{F}l}}
\newcommand{\wt}{\widetilde}
\newcommand{\Qlb}{\mathbb{\bar Q}_\ell}
\newcommand{\Gr}{\on{Gr}}
\newcommand{\hook}[1]{\stackrel{#1}{\hookrightarrow}}
\newcommand{\SL}{\on{SL}}
\newcommand{\cI}{\mathcal{I}}
\newcommand{\cO}{\mathcal{O}}
\newcommand{\cH}{\mathcal{H}}
\newcommand{\cS}{\mathcal{S}}
\newcommand{\ZZ}{\mathbb{Z}}
\begin{document}
\selectlanguage{english}
\title[ Geometric local theta correspondence for $(\GL_n,\GL_m)$]{Geometric local theta correspondence for dual reductive pairs of type II at the Iwahori level}
\author{Banafsheh Farang-Hariri}

\address{Humboldt-Universit\"et zu Berlin, Mathematisch-Naturwissenschaftliche Fakult\"at II \, \, \, \, \, \, \, \, \, \, \, \, \, \,   Institut f\"ur Mathematik, Sitz: Rudower Chaussee 25 (Adlershof), D-10099 Berlin}
\email{bfhariri@gmail.com}
\parindent=0cm
\begin{abstract}
In this paper we are interested in the geometric local theta correspondence at the Iwahori level for dual reductive pairs $(G,H)$ of type II over a non-Archimedean field of characteristic $p\neq 2$ in the framework of the geometric Langlands program.  We consider the geometric version of the $I_{H}\times I_{G}$-invariants of the Weil representation $\mathcal{S}^{I_{H}\times I_{G}}$ as a bimodule under the of action Iwahori-Hecke algebras $\iwahorihecke{G}$ and $\iwahorihecke{H}$ and we give some partial geometric description of the corresponding category under the action of Hecke functors. We also define geometric Jacquet functors for any connected reductive group $G$ at the Iwahori level and we show that they commute with the Hecke action of the $\iwahorihecke{L}$-subelgebra of $\iwahorihecke{G}$ for a Levi subgroup $L$.

Keywords---geometric representations theory, local theta correspondence, Iwahori-Hecke algebra, perverse sheaves, affine flag varieties.
\par\medskip
Mathematics Subject Classification (2010)--- Primary  14D24; 11F27; Secondary; 22E57; 20C08
\end{abstract}

\maketitle
{\small\tableofcontents}
\section{Introduction} 
Let $\K$ be a finite field $\F_{q}$ of characteristic different from $2$, let $F=\K((t))$ and $\cO=k[[t]]$. All representations will be assumed to be smooth and will be defined over $\qelbar$, where $\ell$ is a prime number different from the characteristic of $F$. The basic notions of the Howe correspondence from the classical point of view have been presented in \cite{MVW}, see also \cite{Kudla}. Let $(G,H)$ be a split dual reductive pair in some symplectic group $Sp(W)$ over $\K$ and let  $\widetilde{Sp}(W)$ be the metaplectic group which is the twofold topological covering of the symplectic group $Sp(W)$. Let $\cS$ be the Weil representation of $\widetilde{Sp}(W)$. Assume that the metaplectic cover $\widetilde{Sp}(W)\to Sp(W)$ admits a section over $G(F)$ and $H(F)$. Then the local theta correspondence (known also as Howe correspondence) is a correspondence between some class of  representations of $G(F)$ and some class of representations of $H(F)$. It is well-known that Howe correspondence  realizes Arthur-Langlands functoriality in some special cases. From the classical point of view, we refer to \cite{Adams}, \cite{Arthur}, \cite{Kudla} \cite{Rallis} and from the geometric point of view, we refer to  \cite{Lysenko1}, \cite{BFH2}.  It is of great interest  to understand the geometry underlying the Howe correspondence and establish its analogue in the framework of the geometric Langlands program (see \cite{Frenkel-Gaitsgory}, \cite{Bezrukavnikov}, \cite{Lysenko-Lafforgue}, \cite{Lysenko1}, \cite{BFH2}). The unramifed geometric Howe correspondence for dual pairs $(\mathbf{Sp}_{2n}, \mathbf{S}\mathbf{O}_{2m})$ and $(\GL_n,\GL_m)$ has been  studied in \cite{Lysenko1}. One of our motivations is to extend the results of \cite{Lysenko1} to the Iwahori (tamely ramfied) case in the geometric setting and complete the description of the Howe correspondence for dual reductive pairs of type II already initiated in \cite{BFH2}. This will be a new step towards proving the relation between Howe correspondence and Arthur-Langlands functoriality conjecture for dual reductive pairs of type II announced in \cite[Conjecture 1.2]{BFH} already proved for dual pairs $(\GL_1, \GL_m)$ for all $m\geq 1$. 
\par\medskip
In the sequel we will restrict ourselves to the dual reductive pairs of type II. More Precisely, let $L_{0}$ (resp. $U_{0}$) be a $n$-dimensional (resp. $m$-dimensional) $\K$-vector  space with $n\leq m$, and let $G=\GL(L_{0})$ and $H=\GL(U_{0})$. Denote by $\Pi(F)$ the space $(U_{0}\otimes L_{0})(F)$ and $\mathcal{S}(\Pi(F))$ the Schwartz space of locally constant functions with compact support on $\Pi(F)$. This space realizes the restriction of the Weil representation to $G(F)\times H(F)$. According to Howe and \cite{Minguez1}, we know that the Howe correspondence associates to any smooth irreducible representation $\pi$ of $G(F)$ appearing as a quotient of the restriction of the Weil representation a unique smooth irreducible non-zero representation of $H(F),$ denoted by $\theta_{n,m}(\pi),$ such that $\pi\otimes \theta_{n,m}(\pi)$ is a quotient of the restriction of the Weil representation to $G(F)\times H(F)$.
\par\medskip
One of the most interesting classes of representations to be considered for the study of the Howe correspondence is the class of tamely ramified representations. A representation of $G(F)$ is said to be tamely ramified if it admits a non zero vector fixed  under an Iwahori subgroup $I_{G}$ of $G(F)$. Let us consider the functor sending any tamely ramified representation $V$ of $G(F)$ to its space of invariants $V^{I_{G}}$ under $I_{G}$. Then, the latter  is naturally a module over the Iwahori-Hecke algebra $\iwahorihecke{G}$. According to \cite[Theorem 4.10]{Borel1} this functor is an equivalence of categories between the category of tamely ramified admissible representations of $G$ and the category of finite-dimensional modules over $\iwahorihecke{G}$. Moreover, this functor is exact over the category of smooth representations of $G(F)$. In the tamely ramified case, we can interpret the Howe correspondence in the language of modules over Iwahori-Hecke algebras. The space $\cS(\Pi(F))^{I_H\times I_G}$ of $I_{H}\times I_{G}$-invariants  in the Weil representation $\mathcal{S}(\Pi(F))$ is naturally a bimodule under the action of Iwahori-Hecke algebras $\iwahorihecke{G}$ and $\iwahorihecke{H}.$ We would like to understand this module structure by geometric means. The geometric analogue of the Schwartz space $\cS(\Pi(F))^{I_H\times I_G}$ and the action of Iwahori-Hecke algebras of $G$ and $H$ on it have been already constructed in \cite[\S 3]{BFH2}. Namely, in the geometric setting the space $\cS(\Pi(F))^{I_H\times I_G}$ is the category $P_{I_H\times I_G}(\Pi(F))$ of $I_H\times I_G$-equivariant perverse sheaves on $\Pi(F)$, its precise definition involves some limit procedure. Denote by $D_{I_{H}\times I_{G}}(\Pi(F))$ the derived category of $\ell$-adic $I_{H}\times I_{G}$-equivariant sheaves on $\Pi(F)$ constructed in \cite{BFH2}. The action of Iwahori-Hecke algebras is geometrized to an action of Hecke functors $\heckefunc{G}$ and $\heckefunc{H}$. Denote by $\flagvar{G}$ the affine flag variety of $G$ and by $P_{I_{G}}(\flagvar{G})$ the category of $I_{G}$-equivariant perverse sheaves on $\flagvar{G}$. These Hecke functors define an action of $P_{I_{G}}(\flagvar{G})$ and $P_{I_{H}}(\flagvar{H})$ on  $D_{I_H\times I_G}(\Pi(F))$. This geometrization has actually been  done in a more general setting of any dual reductive pair and at the level of derived categories in \cite{BFH2}. While in \cite[\S 7]{BFH2} we gave an explicit description of the bimodule $P_{I_H\times I_G}(\Pi(F))$ in the case of $n=1$ and $m\geq 1$, in this article, we obtain some partial results towards the description of the category $P_{I_H\times I_G}(\Pi(F))$ as a module over the Hecke functors for any $n\leq m$. 
\par\medskip
One of the key steps in \cite{BFH2} is the description of the simple objects of the category  $P_{I_{H}\times I_{G}}(\Pi(F))$ that we will use in this paper. Let us recall this result. Let $S_{n,m}$ denote the set of pairs: a subset $I_{s}\subset\{1,\ldots, m\}$ of $n$ elements and a bijection $s: I_{s}\to\{1,\ldots, n\}$. For $N,r$ two integers such that $N+r>0$, let $\Pi_{N,r}=t^{-N}\Pi/t^r\Pi$. Fix a maximal torus $T$ in $ G$ and a Borel subgroup $B$ containing $T$. Denote $X_{G}$ the lattice of cocharacters of $T$. For each pair $(\lambda,s)$  in $X_{G}\times S_{n,m},$ we have introduced some subvarieties $\Pi_{N,r}^w$ in $\Pi_{N,r}$ for $N,r$ large enough and we obtained the following result \cite[Theorem 6.6]{BFH2}: 
the  simple objects of  the category $P_{I_{H}\times I_{G}}(\Pi(F))$ are parametrized by $X_{G}\times S_{n,m}.$ For any element $w=(\lambda,s)$ in $X_{G}\times S_{n,m},$ the  irreducible object of $P_{I_{H}\times I_{G}}(\Pi(F))$ indexed by $w$ is the intersection cohomology sheaf $\cI^w$ of $\Pi_{N,r}^{w}$ for $N,r$ large enough. We also introduced the objects $\cI^{w !}$, which are extensions by zero of the constant perverse sheaf under $\Pi_{N,r}^{w}\hook{} \Pi_{N,r}$. Our aim is to understand as possible as we can the action of Hecke functors on these simple objects.
\par\medskip
\subsection*{Summary of results} $ $
We construct a filtration on the category  $P_{I_H\times I_G}(\Pi(F))$ compatible with the Hecke functors which enables us to control this action. We study some submodules of $P_{I_H\times I_G}(\Pi(F))$ and give a precise description of those under the action of Hecke functors.  Particularly we construct the geometric version of the first term of Kudla's filtration and we show that it can be identified  with the induced representation from a parabolic subalgebra of $\iwahorihecke{H}$ by geometric means. Kudla's filtration is a key ingredient in the study of Howe correspondence for dual pairs of type II in the classical setting \cite{Minguez1}. It is also used in the study of Howe correspondence for dual reductive pairs of type I by Kudla \cite{Kudla}.
\par\medskip
We also construct a geometric version of Jacquet functors of the Weil representation at the Iwahori level and show that they commute with the Hecke action of the subalgebra $\iwahorihecke{L}$ of $\iwahorihecke{G}$ for some Levi subgroup $L$. The Jacquet functors of the Weil representation have been studied in the classical representation theory by Kudla \cite{Kudla} and Rallis \cite{Rallis} at the unramified level. A part of these classical results at unramified level have been already geometrized in \cite{Lysenko1}. Our construction is an extension of geometric Jacquet functors at the unramified level obtained in \cite[Corollary 3]{Lysenko1} to the Iwahori level.  In \cite{Lysenko1}, one of the key results used to prove the commutativity of the Hecke actions and Jacquet functors is the hyperbolic localization functor of Braden introduced in \cite{Braden}. There is another construction of geometric Jacquet functors in the case of real reductive groups due to Emerton-Nadler-Vilonen using D-modules and nearby cycles on the flag variety \cite{ENV}. Although the construction in \cite{ENV} is done in an algebraic way, the underlying geometric interpretation is explained in \cite[\S 5]{ENV} and relies on the hyperbolic localization used in \cite{Lysenko1} . THis is also the case in our construction and can  be seen in Corollary $\ref{hyploc}$.	
\par\medskip
Let us briefly discuss how the paper is organized.
In section $\ref{filtration}$,  assuming $n\leq m$, we introduce a filtration on $P_{I_H\times I_G}(\Pi(F))$ indexed by $\ZZ$ and show that it is compatible with the natural grading of $P_{I_G}(\Fl_G)$ given by the connected components of $\Fl_G$ and by the action of Hecke functors. This filtration on $P_{I_H\times I_G}(\Pi(F))$  is expected to be compatible with the filtration already studied on the conjectural bimodule $K(\mathcal{X})$ in \cite[\S $8$]{BFH2} describing the geometric local Arthur-Langlands functoriality at the Iwahori level for some map between corresponding dual Langlands groups.    
\par\medskip
Consider the element $w=(0, (I, w_0))\in X_G\times S_{n,m}$, where $I=\{1,\ldots, n\}$ and $w_0: I\to \{1,\ldots, 0\}$ is the longest element of the finite Weyl group of $G$.  We obtain some results on the submodule in $P_{I_{H}\times I_{G}}(\Pi(F))$ generated by $\cI^{w_{0}}$ (resp., $\cI^{w_{0}!}$). In subsection $\ref{nm}$, we consider the case $n=m$. The submodule in $D_{I_H\times I_G}(\Pi(F))$ generated by $\cI^{w_{0}!}$ over $P_{I_G}(\Fl_G)$ is free of rank one (and is also preserved by $P_{I_H}(\Fl_H)$). We also precise an equivalence of categories $\tilde\sigma: P_{I_H}(\Fl_H)\,\isom\, P_{I_G}(\Fl_G)$ which defines at the level of functions  an anti-involution of the Iwahori-Hecke algebras $\iwahorihecke{G}$ and $\iwahorihecke{H}$. By means of this equivalence we relate the actions of Hecke functors for $H$ and $G$ on the submodule generated by $\cI^{w_{0}}$ (resp. $\cI^{w_{0}!}$). In subsection $\ref{theta}$, we assume $n\leq m$ and we consider the submodule $\Theta$ in the Grothendieck group $K(D_{I_H\times I_G}(\Pi(F)))$ of $D_{I_H\times I_G}(\Pi(F)))$ under the action of  $K(P_{I_G}(\Fl_G))$  generated by the elements $\mathcal{I}^{\mu!},$ where $I_{\mu}$ runs through all possible subsets of $n$ elements in $\{1,\dots ,m\}$.  We show that the submodule $\Theta$ is free of rank $C_{m}^{n}$ over $K(P_{I_G}(\Fl_G))$. The elements $\mathcal{I}^{\mu!}$ form a basis of this module over $K(P_{I_G}(\Fl_G))$. The submodule $\Theta$ is a key object in the proof of the classical Howe correspondence. It is indeed the first term of the Kudla's filtration defined over the Weil representation in \cite{MVW}. The considerations in this subsection are essentially on the level of Grothendieck groups, we formulate them on the level of derived categories however when this is possible. 
Let $\cS_{0}$ be the $\Qlb$-subspace  of  $K(D_{I_H\times I_G}(\Pi(F))\otimes\Qlb$ generated by elements of the form $\cI^{(w\centerdot w_0)!}$, where  $w$ runs through $\widetilde{W}_G$ the affine extended Weyl group of $G$. The space  $\cS_0$  is a free module  of rank one over $\cH_{I_G}$.   We consider the standard Levi subgroup $M$ of $H$ corresponding to the partition $(n,m-n)$ of $m$ and we recall briefly the construction of the subalgebra $\iwahorihecke{M}$  of the Iwahori-Hecke algebra $\iwahorihecke{H}$ and some properties according to \cite{Prasad}. Then we endow $\mathcal{S}_{0}$ with a right action of $\iwahorihecke{M}$ and by parabolic induction we construct an induced module.  We obtain the two following results:  The space $\mathcal{S}_{0}$ is a submodule of $K(D_{I_{G}\times I_{H}}(\Pi(F)))\otimes \qelbar$ for the right action of $\iwahorihecke{M}.$  The adjunction map $\alpha:\mathcal{S}_{0}\otimes_{\iwahorihecke{M}} \iwahorihecke{H} \to \mathcal{S}^{I_{H}\times I_{G}}(\Pi(F))$ is injective and its image equals $\Theta\otimes \qelbar.$ In the rest of this subsection we show that the action of the Iwahori-Hecke algebra of the factor  $\GL_{n}$ of $M$ identifies  with the action of $\iwahorihecke{G}$ via the anti-involution $\tilde{\sigma}$ defined in Theorem $\ref{sig}$. The action of the Iwahori-Hecke algebra  of $\GL_{m-n}$ is  by shifting by $[-\ell(w)]$, where $\ell$ denotes the length function on $\weyl{G}$.  
\par\medskip
At last in section $\ref{jacquet}$, we construct a geometric analogue of  Jacquet functors of the Weil representation in the Iwahori case. These functors are a key step in the geometric proof of the Howe correspondence in the unramified case in \cite{Lysenko1}. Moreover, we prove that they are compatible with the Hecke action of the $\iwahorihecke{L}$-subelgebra of $\iwahorihecke{G}$. We also show that they preserve pure perverse sheaves of weight zero. 
\par\medskip
In Appendix $\ref{Appendix}$ we recall the construction of Hecke functors from \cite{BFH2} and in Appendix $\ref{Appendix2}$ we present a complete calculation of the Hecke functor corresponding to $H$ in the special case of objects $\mathcal{I}^{\mu}$.
\par\medskip
\textbf{Acknowledgement}: The author would like to thank her advisor Sergey Lysenko for initiating her to this subject and sharing his insights and ideas and also acknowledge support from DFG International Research Training Group GRK1800 \textit{Moduli and automorphic forms}.
\section{Notation and setup}
\label{notation}
Let $\K$ be an algebraically closed field of characteristic $p> 2$ and let $F=\K((t))$ be the field of Laurent series with coefficients in $\K$ and $\locring=\K[[t]]$ be its  ring of integers. Denote by $\ell$ a prime number different from $p$. Let $L_{0}$ (resp. $U_{0}$) be a $n$-dimensional (resp. $m$-dimensional) $\K$-vector  space with $n\leq m$, and let $G=\GL(L_{0})$ and $H=\GL(U_{0})$.  
Denote by $\{e_{1},\dots,e_{n}\}$ the standard basis of $L_{0}$ and $\{u_{1},\dots,u_{m}\}$ the standard basis of $U_{0}$ and $\{u^{*}_{1},\dots,u^{*}_{m}\}$ its dual basis. Denote by $\Pi(F)$ the space $(U_{0}\otimes L_{0})(F)$ and $\mathcal{S}(\Pi(F))$ the Schwartz space of
$\qelbar$-valued locally constant functions with compact support on $\Pi(F)$. Let $T_{G}$ (resp. $T_{H}$) be the standard maximal torus of diagonal matrices in $G$ (resp. $H$) and $B_{G}$ (resp. $B_{H}$) be the Borel subgroup of upper triangular matrices containing $T_{G}$ (resp. $T_{H}$).  Denote by $I_{G}$ (resp. $I_{H}$) the Iwahori subgroup of $G(F)$ (resp. $H(F)$) corresponding to the standard Borel subgroup $B_{G}$ (resp. $B_{H}$).  Denote by $(\check{X}_{G},\check{R},X_{G},R, \Delta)$ the root datum associated with $(G,T_{G}, B_{G})$. Throughout this article we denote by $\check{X}_{G}$ the characters of $T_{G}$ and $X_{G}$ denotes the cocharacter lattice of $T_{G}$. The set $\check{R}$ is the set of roots and $R$ is the set of coroots and $\Delta_{G}$ is the basis formed by simple roots. If there is no ambiguity we will omit the subscript ${}_G$.  We denote by $W_{G}$ the finite Weyl group associated with the root datum $(\check{X}_{G},\check{R},X_{G},R)$. Let $\weyl{G}$ be  the affine extended Weyl group which is the semi-direct product $W_{G}\ltimes X_{G}.$  Denote by $\ell$ the length function on $\weyl{G}$. Let $X_{G}^{+}$ be the set of dominant elements in $X_{G}$. 
\par\medskip
For any scheme or stack locally of finite type over $\K$, we denote by $D(S)$  the bounded derived category of constructible $\qelbar$-sheaves over $S$. Write $\mathbb{D}$ for the Verdier duality functor and we denote by $P(S)$ the full subcategory of 
perverse sheaves in $D(S)$.  Denote by $K(P(S))$ the Grothendieck group of the category $P(S)$. Let $X$ be a scheme of finite type over $\K$. For $Z$ a smooth $d$-dimensional irreducible locally closed subscheme of $X$ and $i : Z\to X$ the corresponding immersion, we define the intersection cohomology
sheaf (IC-sheaf for short), $\IC(Z)$ as the perverse sheaf $i_{Z!*}(\qelbar)[d]$.
\par\medskip
 Assume temporary that the ground field $\K$ is the finite field $\F_{q}.$
Denote by $\iwahorihecke{G}$ the Iwahori-Hecke algebra of $G$ which is the space of locally constant, $I_{G}$-bi-invariant compactly supported $\qelbar$-valued functions on $G(F)$ endowed with the convolution product. There are two well-known  presentations of this algebra by generators and relations. The first is due to Iwahori-Matsumoto \cite{Iwahori-mat} and the second is by Bernstein in \cite{Lu1} and \cite{Lu2}. We will use the second one. 
\par\medskip
Denote by $Gr_{G}$ the affine Grassmanian associated with $G$. The $G(\mathcal{O})$-orbits on $Gr_{G}$ are parametrized by $W_{G}$-orbits in $X_{G}$ and for a given $\lambda$ in $X_{G}$, the $G(\locring)$-orbit associated to $W_{G}.\lambda$ is $G(\locring).t^{\lambda}$ denoted by $\Gr_{G}^{\lambda}$, where $t^{\lambda}$ is the image of $t$ under the map $\lambda:F^{*}\to G(F)$.  The $I_G$-orbits on $Gr_{G}$ are parametrized by cocharacters $\lambda$ in $X_{G}$. For any $\lambda$ in $X_{G}$, denote by $O^{\lambda}$ the $I_{G}$-orbit through $t^{\lambda}G(\locring)$ in $Gr_{G}$ and by $\overline{O^{\lambda}}$ its closure. Each orbit is an affine space. The category $P_{G(\locring)}(Gr_{G})$ of $G(\mathcal{O})$-equivariant perverse sheaves on $Gr_{G}$ is endowed with a geometric convolution product making it a symmetric monoidal category \cite{Mirkovic}. 
Denote by $\flagvar{G}$ the affine flag variety associated with $G$ and denote by  $P_{I_{G}}(\flagvar{G})$ the category of $I_{G}$-equivariant  perverse sheaves on $\flagvar{G}$. The category  $D_{I_{G}}(\flagvar{G})$ is endowed with the geometric convolution denoted by $\star$, (see \cite{Gaitsgory}), and we have $K(P_{I_{G}}(\flagvar{G}))\otimes \qelbar\iso \iwahorihecke{G}$. 
\par\medskip
 Let $R$ be a $\K$-algebra. A complete periodic flag of lattices inside $R((t))^{n}$ is a flag
$$L_{-1} \subset L_{0} \subset L_{1} \subset \dots$$
 such that each $L_{i}$ is a lattice in $R((t))^{n},$   each quotient $L_{i+1}/L_{i}$ is a locally free $R$-module of rank one and $L_{n+k}=t^{-1}L_{k}$ for any $k$ in $\Z.$ For $1\leq i\leq n$, set 
 $$\Lambda_{i,R}=(\oplus_{j=1}^{i} t^{-1}R[[t]]e_{j})\oplus(\oplus_{j=i+1}^{n}R[[t]]e_{j}).$$ 
For all $i$ in $\Z$, we set $\Lambda_{i+n,R}=t^{-1}\Lambda_{i,R}$. 
This defines the standard complete lattice flag 
$$\Lambda_{-1,R}\subset\Lambda_{0,R}\subset\Lambda_{1,R}\subset \dots $$
denoted by $\Lambda_{\bullet,R}$ in $R((t))^{n}.$  
For any $\K$-algebra $R$, the set $\flagvar{G}(R)$ is naturally in bijection with the set of complete periodic lattice flags in $R((t))^{n}$ and is an ind-scheme.  
\par\medskip
Assume that $\K$ is finite.
For any $w\in{\weyl{G}}$  we will denote the Schubert cell $I_{G}wI_{G}/I_{G}$ by $\flagvar{G}^{w}$. It is  isomorphic to $\mathbb{A}^{\ell(w)}.$  For $w\in \weyl{G},$ denote by $j_{w}$ the inclusion of $\flagvar{G}^{w}$ in $\flagvar{G},$ and let  $L_{w}=j_{w!*}\qelbar[\ell(w)](\ell(w)/2)$ be the IC-sheaf of $\flagvar{G}^{w}.$  We write
 $L_{w!}=j_{w!}\qelbar[\ell(w)](\ell(w)/2)$ and $L_{w*}=j_{w*}\qelbar[\ell(w)](\ell(w)/2)$ for the standard and costandard objects.  They satisfy $\mathbb{D}(L_{w*})=L_{w!}$. Remark that in the notation of $L_{w!}$ and $L_{w*}$  we wrote the Tate twists as we assumed that we are working on a finite field. To any element $\mathcal{G}$ in $P_{I_G}(\flagvar{G})$ we attach a function  $[\mathcal{G}]:G(F)/I_{G}\longrightarrow \qelbar$  given by $[\mathcal{G}](x)=Tr(Fr_{x},\mathcal{G}_x),$ for $x$ a point in $G(F)/I_{G}$ and $Fr_x$ is the geometric Frobenius at $x$.  The function $[\mathcal{G}]$ is an element of the $\iwahorihecke{G}.$ In particular $[L_{w!}]=(-1)^{\ell(w)}q_{w}^{-1/2}T_w$ and $[L_{w*}]=(-1)^{\ell(w)}q_{w}^{1/2}T_{w^{-1}}^{-1},$ where $q_{w}=q^{\ell(w)}.$ Here $T_{w}$ denotes the characteristic function of the double coset $I_{G}wI_{G}.$ Remark that in this paper as we will work over an algebraically closed field, we will forget the Tate twists. 
The map  sending  $\lambda$  to $L_{t^{\lambda}*},$  for any $\lambda$ in $X^{+}_{G}$ extends naturally to a monoidal functor $$\mathrm{R}(T)\longrightarrow{D_{I_{G}}(\flagvar{G})}.$$
The image of $\lambda$ under the above functor  is usually called a Wakimoto sheaf.  There are two conventions for defining the Wakimoto sheaves. The first convention is due to Bezrukavnikov in \cite{Arkhipov}. We will use the convention due to Prasad in \cite{Prasad} by letting $\Theta_{\lambda}=L_{t^{\lambda !}}$ for $\lambda$ dominant and $\Theta_{\lambda}=L_{t^{\lambda *}}$ for $\lambda$ anti-dominant.  In any case Wakimoto sheaves verify the following: $\lambda\in{X}$, if $\lambda=\lambda_{1}-\lambda_{2}$ where $\lambda_{i}$ are dominant for $i=1,2,$ then $\Theta_{\lambda}\simeq\Theta_{\lambda_{1}}\star\Theta_{-\lambda_{2}}.$
 According to \cite[Theorem 5]{Arkhipov}, these are actually objects of the category $P_{I_{G}}(\flagvar{G})$ (a priori they are defined as objects of the triangulated category $D_{I_{G}}(\flagvar{G}))$).  
\par\medskip
As mentioned above the space  $\mathcal{S}(\Pi(F))^{I_{H}\times I_{G}}$ is naturally a module over Iwahori-Hecke algebras $\iwahorihecke{G}$  and $\iwahorihecke{H}$ of $G$ and $H$. The action is defined  by convolution.  The geometric analogue of the $(\iwahorihecke{G},\iwahorihecke{H})$-bimodule $\mathcal{S}(\Pi(F))^{I_{H}\times I_{G}}$ is constructed in \cite[\S 3]{BFH2} that is  the category $P_{I_{H}\times I_{G}}(\Pi(F))$ of $I_{H}\times I_{G}$-equivariant perverse sheaves on $\Pi(F)$ in the derived category $D_{I_{H}\times I_{G}}(\Pi(F))$ under the action of the two Hecke functors:
$$\heckefunc{G}:P_{I_{G}}(\flagvar{G})\times D_{I_{H}\times I_{G}}(\Pi(F))\longrightarrow D_{I_{H}\times I_{G}}(\Pi(F))$$
and 
$$\heckefunc{H}:P_{I_{H}}(\flagvar{H})\times D_{I_{H}\times I_{G}}(\Pi(F))\longrightarrow D_{I_{H}\times I_{G}}(\Pi(F)).$$
For the sake of the reader, we recall briefly the construction of these Hecke functors in Appendix $\ref{Appendix}$ following \cite{BFH2}.  The goal is to understand these two Hecke functors as much as possible.
\par\medskip
For any two integers $N,r$ such that $N+r>0$, let $\Pi_{N,r}=t^{-N}\Pi/t^r\Pi$. 
Let $U^*$ be the dual of $U$.   A point $v$ in  $\Pi(F)$ may be seen as a $\locring$-linear map $v:U^{*}\to L(F).$ For $v$ in $\Pi_{N,r},$ let $U_{v,r}=v(U^*)+t^{r}L.$ Then $U_{v,r}$  is a $\locring$-module in $L(F)$ and may be seen as a point of $Gr_{G}$.  Let $\check{\omega}_{1}=(1,0\dots,0)$ be the highest weight of the standard representation of $G$, let $w_0$ be the longest element of the finite Weyl group $W_{G}$. For $\lambda\in X_{G}$ such that for any $\nu$ in $W_{G}.\lambda$
\begin{equation}
\label{condition1}
\langle\nu,\check{\omega}_{1}\rangle\leq{r}\hspace{1cm} \textrm{and}\hspace{1cm}\langle -\nu,\check{\omega}_{1}\rangle\leq N,
\end{equation}
let $\Pi_{\lambda,r}\subset \Pi_{N,r}$ be the locally closed subscheme of $v\in \Pi_{N,r}$ such that $U_{v,r}$ lies in $I_G t^ {\lambda} G(\locring)$. According to \cite{BFH2}, the $H(\locring)\times{I_G}$-orbits on $\Pi_{N,r}$ parametrized by elements  $\lambda$ in  $X_{G}$ satisfying $\eqref{condition1}$ are exactly $\Pi_{\lambda,r}$.  Let $S_{n,m}$ be the set of pairs $(s, I_{s})$ such that $I_{s}$ is a subset of $\{1,\ldots, m\}$ of $n$ elements and a bijection $s: I_{s}\to\{1,\ldots, n\}$.  Let $w=(\lambda,s)\in X_{G}\times S_{n,m}$, where $\lambda=(a_1,\ldots, a_n)$ and assume $a_i<r$ for all $i$.
Denote by $\Pi_{N,r}^w$ the $I_H\times I_G$-orbit on $\Pi_{N,r}$ through the element $v$ given by
\begin{equation}
\label{def_of_IH_IG_orbit}
\left\{ \begin{array}{ll}
         v(u^{*}_i)=t^{a_{si}}e_{si} & \mbox{ for $i\in{I_s}$}\\
        v(u^{*}_i)=0 & \mbox{for $i\notin{I_s}$}.\end{array} \right. 
\end{equation}        

The closure of $\Pi_{N,r}^w$ in $\Pi_{N,r}$ will be denoted by 
$\overline{\Pi}_{N,r}^{w}$. The  simple objects of $P_{I_{H}\times I_{G}}
(\Pi(F))$ are parametrized by $X_{G}\times S_{n,m}$ \cite[Theorem 6.6]{BFH2}. For any 
element $w=(\lambda,s)$ in $X_{G}\times S_{n,m},$ the  simple object of $P_{I_{H}\times 
I_{G}}(\Pi(F))$ indexed by $w$ is the intersection cohomology sheaf $\cI^w$ of 
$\Pi_{N,r}^{w}$ for $N,r$ large enough. The object of $P_{I_H\times I_G}(\Pi(F))$ so obtained is independent of $N,r$, so our notation is unambiguous. 
We denote  $\cI^{w !}$ the objects which are extensions by zero of the constant perverse sheaf under the inclusion $\Pi_{N,r}^{w}\hook{} \Pi_{N,r}$. 
\section{Filtration and grading}
\label{filtration}
 In this section, we construct a filtration on $P_{I_H\times I_G}(\Pi(F))$ indexed by $\ZZ$. There is a natural grading of $P_{I_G}(\Fl_G)$ given by the connected components of $\Fl_G$. We will show that the filtration on $P_{I_H\times I_G}(\Pi(F))$ is compatible with the grading on $P_{I_G}(\Fl_G)$  and Hecke functor $\heckefunc{G}$. This enables us to control the action of the Hecke functors on the category $P_{I_H\times I_G}(\Pi(F))$.
 \par\medskip
%
%
Denote by $\pi_{1}(G)$ the algebraic fundamental group of $G$ which is formed by the elements of length zero in the affine extended Weyl group of $G$. The connected components of $Gr_G$ and $\flagvar{G}$  are indexed by $\pi_1(G)$. This yields a following natural grading 
$$\bigoplus_{\theta\in \pi_1(G)}\, P_{I_G}(\flagvar{G}^{\theta})\,\iso\, P_{I_{G}}(\flagvar{G}).$$ 
Denote by $\check{\omega}_{n}$ the character by which the group $G$ acts on  $\mathrm{det}(L_0),$ i.e. $\check\omega_{n}=(1,\dots,1).$ We may identify $\pi_1(G)$ with $\ZZ$ via the map $\theta\mapsto \langle\theta,\check{\omega}_n\rangle$. 
This grading is compatible with the convolution product on $D_{I_{G}}(\flagvar{G})$.  There is also a grading  $\iwahorihecke{G}^{k}$, $k\in\Z$ of the Iwahori-Hecke algebra $\iwahorihecke{G},$ see \cite{Lu1}. Besides the isomorphism between $K(P_{I_{G}}(\flagvar{G}))\otimes\qelbar$ and $\iwahorihecke{G}$ becomes a graded isomorphism. 
\par\medskip
For an integer $a$ in $\Z$, let  $\Filt^{a}$ be the full subcategory in $P_{I_{H}\times{I_{G}}}(\Pi(F))$ defined as the Serre subcategory generated  by the objects $\mathcal{I}^{w},$ where $w=t^{\lambda}\tau$ are elements of $X_{G}\times S_{n,m}$  satisfying $\langle\lambda, \check{\omega}_n\rangle\geq a.$  This defines a filtration on the category $P_{I_H\times{I_G}}(\Pi(F))$ indexed by $\mathbb{Z}.$ 
\par\medskip
Let $w=t^{\lambda}\tau$ and $u=t^{\mu}\nu$ be two elements in $X_{G}\times S_{n,m}.$ The condition that $\Pi_{N,r}^{u}$ lies in the closure of $\Pi_{N,r}^{w}$ implies that 
$\langle\mu,\check{\omega}_{n}\rangle\geq \langle\lambda, \check{\omega}_{n} \rangle.$
Indeed, for any point $v$ of $\Pi_{N,r},$ the dimension of $U_{v,r}/t^{r}L$ can only decrease under specialization. For the orbit  $\Pi_{N,r}^{u}$ lying in the closure of $\Pi_{N,r}^{w}$ the number $\langle \mu , \check{\omega}_{n} \rangle$ can be arbitrary large. This number depends on $r$ is not uniformly bounded.  
\begin{lemma}
\label{compatible}
Let $w_1=t^{\lambda_{1}}\tau_{1},$ and $w_2=t^{\lambda_{2}}\tau_{2},$ be two elements in $X_{G}\times S_{n,m}$. For $i=1,2$ choose two integers $N_i$ and $r_i$  such that the following conditions are satisfied : for any $\nu\in W_{G}\lambda_{i}$

$$\hspace{1mm}\langle\nu,\check{\omega}_{1}\rangle\leq{r_1},\hspace{1mm} \langle\nu,\check{\omega}_{1}\rangle<{r_2},\hspace{1mm}\mathrm{and}\hspace{1mm}\langle -\nu,\check{\omega}_{1}\rangle\leq{N_i}.$$
 Let $v$ be an element in $\Pi_{N_{2},r_{2}}^{w_2}\subset \Pi_{\lambda_{2},r_{2}}$ and $gI_{G}$ be an element in $\flagvar{G}^{w_1}.$ For $i=1,2$ let $\mu_{i}$ be a dominant cocharacter lying in $W_{G}\lambda_{i}.$ Then  there exists a cocharacter $\mu$ smaller than or equal to $\mu_{1}+\mu_2$  such that $gv$ belongs to $\Pi_{\mu,r_1+r_2}.$ 
\end{lemma}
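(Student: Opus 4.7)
The plan is to reduce the statement to the standard convolution bound for double cosets in the affine Grassmannian, after first identifying the lattice $U_{gv,r_1+r_2}$ explicitly as $gk\cdot t^{\lambda_2}L$ for some $k\in I_G$.

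I would begin with the parametrization \eqref{def_of_IH_IG_orbit} of the orbit $\Pi_{N_2,r_2}^{w_2}$. Since $I_H$ preserves the standard lattice $U^{*}=U_0^{*}\otimes\locring$, the image $v(U^*)\subset L(F)$ is unaffected by the $I_H$-action on $v$. Applied to the standard representative $v_0$, for which \eqref{def_of_IH_IG_orbit} yields $v_0(U^*)=t^{\lambda_2}L$, this shows that any $v\in \Pi_{N_2,r_2}^{w_2}$ satisfies $v(U^*)=k\cdot t^{\lambda_2}L$ for some $k\in I_G$. Hence
\[ U_{gv,r_1+r_2}=gv(U^*)+t^{r_1+r_2}L=gk\cdot t^{\lambda_2}L+t^{r_1+r_2}L. \]

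The next step is to absorb the second summand. Writing $gk=k_1 t^{\lambda_1}\tau_1 k_2$ with $k_1,k_2\in I_G\subset G(\locring)$ (so that $k_1,k_2$ preserve $L$) and noting that $\tau_1$ permutes the standard basis of $L_0$ so $\tau_1 L=L$, the hypotheses $r_1\geq \mu_{1,1}$ and $r_2\geq \mu_{2,1}$ give
\[ t^{r_1+r_2}L\subset t^{\mu_{1,1}+\mu_{2,1}}L\subset gk\cdot t^{\lambda_2}L, \]
so $U_{gv,r_1+r_2}=gk\cdot t^{\lambda_2}L$.

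Finally I would invoke the convolution bound in $\Gr_G$. Since $\tau_1\in G(\locring)$, the Cartan decomposition gives $gk\in I_G t^{\lambda_1}\tau_1 I_G\subset G(\locring)t^{\mu_1}G(\locring)$, while $t^{\lambda_2}\in G(\locring)t^{\mu_2}G(\locring)$. The standard inclusion
\[ G(\locring)t^{\mu_1}G(\locring)\cdot G(\locring)t^{\mu_2}G(\locring)\subset \bigcup_{\nu\leq \mu_1+\mu_2} G(\locring)t^{\nu}G(\locring) \]
then places $U_{gv,r_1+r_2}$ in some $\Gr_G^{\nu}$ with dominant $\nu\leq\mu_1+\mu_2$. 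As $\Gr_G^{\nu}$ decomposes into Iwahori orbits $I_G t^{\mu}G(\locring)$ for $\mu\in W\nu$, and each such $\mu$ satisfies $\mu\leq\nu$ in the dominance order, we conclude that $U_{gv,r_1+r_2}\in I_G t^{\mu}G(\locring)$ with $\mu\leq \mu_1+\mu_2$, i.e.\ $gv\in\Pi_{\mu,r_1+r_2}$. The principal technical point is the absorption step, where the conditions on $r_1,r_2$ and the invariance properties of $k_1,k_2,\tau_1$ under multiplication by $L$ must be combined carefully; the rest reduces to well-known features of the affine Grassmannian.
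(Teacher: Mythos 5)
Your proposal is correct and follows essentially the same route as the paper: identify $U_{gv,r_1+r_2}$ with a lattice of the form $gk\,t^{\lambda_2}L$ and then apply the standard convolution containment $G(\locring)t^{\mu_1}G(\locring)\cdot G(\locring)t^{\mu_2}G(\locring)\subset\bigcup_{\nu\leq\mu_1+\mu_2}G(\locring)t^{\nu}G(\locring)$, decomposed into Iwahori orbits $O^{\mu}$ with $\mu\leq\mu_1+\mu_2$. The only difference is one of detail: the paper quotes $U_{v,r_2}\in O^{\lambda_2}$ directly from $v\in\Pi_{\lambda_2,r_2}$ and compresses the truncation/absorption step into "this implies the assertion," which you spell out explicitly.
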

\begin{proof}
The lattice $U_{v,r_{2}}=v(U^{*})+t^{r_{2}}L$ lies in $O^{\lambda_{2}}\subset Gr_{G}^{\mu_{2}}.$ Thus $g(U_{v,r_{2}})$ lies in $gO^{\lambda_{2}}$. Since $gG(\locring)\in O^{\lambda_{1}}\subset Gr_{G}^{\mu_{1}}$, we have 
$$I_{G}t^{\lambda_{1}}G(\locring)t^{\lambda_{2}}G(\locring)/ G(\locring)\subset \overline {Gr}_{G}^{\mu_{1}+\mu_{2}}$$
and this implies the assertion. 
\end{proof}
\begin{proposition}
Let $w_1=t^{\lambda_{1}}\tau_{1},$ and $w_2=t^{\lambda_{2}}\tau_{2},$ be two elements in $X_{G}\times S_{n,m}$. For $i=1,2$ choose two integers $N_i$ and $r_i$  such that the following conditions are satisfied : for any $\nu\in W_{G}\lambda_{i}$

$$\hspace{1mm}\langle\nu,\check{\omega}_{1}\rangle\leq{r_1},\hspace{1mm} \langle\nu,\check{\omega}_{1}\rangle<{r_2},\hspace{1mm}\mathrm{and}\hspace{1mm}\langle -\nu,\check{\omega}_{1}\rangle\leq{N_i}.$$
 Then $\heckefunc{G}(L_{w_{1}},\mcI^{w_{2}})$ lies in $\Filt^{d}$ with $d=\langle \lambda_{1}+\lambda_{2},\check{\omega}_{n}\rangle.$
\end{proposition}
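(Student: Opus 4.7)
The plan is to control the support of $\heckefunc{G}(L_{w_1},\mcI^{w_2})$ via Lemma \ref{compatible} and then to exploit the fact that $\check\omega_n$ pairs trivially with every coroot of $G$. Recall from Appendix \ref{Appendix} that $\heckefunc{G}(L_{w_1},\mcI^{w_2})$ is obtained by forming the twisted external product of $L_{w_1}$ and $\mcI^{w_2}$ on the $I_G$-balanced product $\flagvar{G}\,\newtimes\,\Pi(F)$ and pushing it forward along the action map $p:\flagvar{G}\,\newtimes\,\Pi(F)\to\Pi(F)$. Consequently, its support is contained in the image
\[
p\bigl(\overline{\flagvar{G}^{w_1}}\,\newtimes\,\overline{\Pi_{N_2,r_2}^{w_2}}\bigr)\subset\Pi_{N_1+N_2,r_1+r_2}.
\]

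Lemma \ref{compatible} gives precisely the pointwise bound I need: for $(gI_G,v)$ in the open part $\flagvar{G}^{w_1}\times\Pi_{N_2,r_2}^{w_2}$, the element $gv$ belongs to some orbit $\Pi_{\mu,r_1+r_2}$ whose dominant Weyl conjugate $\mu'$ satisfies $\mu'\leq\mu_1+\mu_2$. Passing to closures, and using that $v\mapsto U_{v,r_1+r_2}$ is a morphism to $\Gr_G$ while $\overline{\Gr_G^{\mu_1+\mu_2}}$ is closed, I conclude that the support of $\heckefunc{G}(L_{w_1},\mcI^{w_2})$ is contained in the closed subvariety
\[
Y=\bigl\{v\in\Pi_{N_1+N_2,r_1+r_2}:U_{v,r_1+r_2}\in\overline{\Gr_G^{\mu_1+\mu_2}}\bigr\}.
\]

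The key observation now is that $\check\omega_n=(1,\ldots,1)$ annihilates every simple coroot of $G$ and is $W_G$-invariant, so the pairing $\langle-,\check\omega_n\rangle$ is constant on each connected component of $\Gr_G$. The subvariety $\overline{\Gr_G^{\mu_1+\mu_2}}$ sits entirely in the single component indexed by $d=\langle\mu_1+\mu_2,\check\omega_n\rangle=\langle\lambda_1+\lambda_2,\check\omega_n\rangle$, so every $H(\locring)\times I_G$-orbit $\Pi_{\mu,r_1+r_2}$ meeting $Y$ satisfies $\langle\mu,\check\omega_n\rangle=d$.

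To conclude, let $\mcI^u$ with $u=(\nu,s')$ be any simple subquotient of a perverse cohomology sheaf of $\heckefunc{G}(L_{w_1},\mcI^{w_2})$. Then $\overline{\Pi_{N,r}^u}\subset Y$; since $\Pi_{N,r}^u$ is contained in the $H(\locring)\times I_G$-orbit $\Pi_{\nu,r_1+r_2}$, this forces $\Pi_{\nu,r_1+r_2}\subset Y$, and therefore $\langle\nu,\check\omega_n\rangle=d$, so $\mcI^u$ lies in $\Filt^d$. The only point requiring some care is lifting the pointwise assertion of Lemma \ref{compatible} to closures, which is harmless because $\overline{\Gr_G^{\mu_1+\mu_2}}$ is closed in $\Gr_G$ and $v\mapsto U_{v,r_1+r_2}$ is a morphism of (ind-)schemes; in fact I even obtain the sharper statement that every composition factor satisfies the \emph{equality} $\langle\nu,\check\omega_n\rangle=d$, consistent with the fact that $\heckefunc{G}$ respects the grading of $P_{I_G}(\flagvar{G})$ by connected components.
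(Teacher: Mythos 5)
Your overall strategy (a support estimate for $\pi_{!}$ along the convolution diagram, fed by Lemma \ref{compatible}) is the same as the paper's, but the step where you ``pass to closures'' is wrong, and it breaks the proof. The assignment $v\mapsto U_{v,r}=v(U^{*})+t^{r}L$ is \emph{not} a morphism of (ind-)schemes from $\Pi_{N,r}$ to $\Gr_G$: it is only defined stratum by stratum, because the lattice $U_{v,r}$ jumps when the rank of $v$ drops under specialization (already for $n=m=1$, $v=c$ gives $U_{v,r}=\locring$ for $c\neq 0$ and $U_{v,r}=t^{r}\locring$ at $c=0$, which lie in different connected components of $\Gr_G$). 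Consequently your set $Y=\{v\,\vert\, U_{v,r_1+r_2}\in\overline{\Gr}_G^{\mu_1+\mu_2}\}$ does \emph{not} contain the support of $\heckefunc{G}(L_{w_1},\mcI^{w_2})$, and your ``sharper'' conclusion that every composition factor satisfies the equality $\langle\nu,\check{\omega}_n\rangle=d$ is false. The paper itself records the phenomenon (for an orbit $\Pi^{u}_{N,r}$ in the closure of $\Pi^{w}_{N,r}$ the number $\langle\mu,\check{\omega}_n\rangle$ can be arbitrarily large) and gives an explicit counterexample to your equality: for $n=m$, $w$ the transposition $(i,i+1)$ and $w_2=w_0$ one has $\heckefunc{G}(L_w,\mcI^{w_0})\iso\mcI^{ww_0}\oplus\mcI^{w'}$ with $w'=t^{\lambda}w_0$, $\lambda=(0,\dots,1,\dots,0)$, so a factor with $\langle\lambda,\check{\omega}_n\rangle=1>0=d$ occurs. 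This also shows that the category $P_{I_H\times I_G}(\Pi(F))$ carries only a filtration, not a grading, so the analogy you invoke with the grading of $P_{I_G}(\flagvar{G})$ by connected components does not apply on the $\Pi(F)$ side.

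The correct replacement for your closure step is semicontinuity rather than closedness of $\overline{\Gr}_G^{\mu_1+\mu_2}$: under specialization the dimension of $U_{v,r}/t^{r}L$ can only decrease, hence $\langle\mu,\check{\omega}_n\rangle$ (for $U_{v,r}\in O^{\mu}$) can only increase. Combining this with your pointwise bound from Lemma \ref{compatible} on the open stratum, one gets that for $v\in\Pi_{N_1+N_2,r_1+r_2}$ with $U_{v,r_1+r_2}\in O^{\mu}$ the locus $\{gI_G\in\overline{\flagvar{G}^{w_1}}\,\vert\, g^{-1}v\in\overline{\Pi}^{w_2}_{N_2,r_2}\}$, which is what carries $\pi_{!}(\mcI^{w_2}\bt L_{w_1})$ over $v$, is empty unless $\langle\mu,\check{\omega}_n\rangle\geq\langle\lambda_1+\lambda_2,\check{\omega}_n\rangle$. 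Since $\Filt^{d}$ is defined by the inequality $\geq d$ and not by equality, this weaker estimate is exactly what is needed, and it is how the paper concludes.
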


\begin{proof}
The sheaf $\mathcal{I}^{w_{2}}$ is the IC-sheaf of the orbit $\Pi_{N,r}^{w_{2}}$ which is a subscheme of $\Pi_{\lambda_{2},r_{2}}.$ In the notation of Appendix $\ref{Appendix}$,  we have $\overline{\flagvar{G}^{w_{1}}}\subset {}_{{}_{r_{1},N_{1}}}\mathcal{F}l_{G}.$  Choose $r\geq r_{1}+r_{2}$ and $s\geq N_{2}+r_{2}.$   The space 
$\Pi_{N_{2},r_{2}}\newtimes \overline{\flagvar{G}^{w_{1}}}$ is the scheme classifying pairs $(v,gI_{G})$, where  $gI_{G}$ is in $\overline{\flagvar{G}^{w_{1}}}$ and $v$ is in $t^{-N_{2}}g\Pi/t^{r}\Pi$. We have the following diagram 
$$\Pi_{N_{1}+N_{2},r}\overset{\pi}\longleftarrow \Pi_{N_{2},r_{2}}\newtimes \overline{\flagvar{G}^{w_{1}}}\overset{act_{q,s}}{\longrightarrow} K_{s}\backslash (\Pi_{N_{2},r_{2}}),$$ 
where $\pi$ is the projection sending $(v,gI_{G})$ to $v$. Let $\mcI^{w_{2}}\bt L_{w_{1}}$ be the twisted exterior product of $\mcI^{w_{2}}$ and $L_{w_{1}}$ over $\Pi_{N_{2},r_{2}}\newtimes \overline{\flagvar{G}^{w_{1}}}$ which is normalized to be perverse. Then by definition
$$\heckefunc{G}(L_{w_{1}},\mcI^{w_{2}}) \iso \pi_{!}(\mcI^{w_{2}}\bt L_{w_{1}}).$$
In our case $\mcI^{w_{2}}\bt L_{w_{1}}$ is  the IC-sheaf of $act_{q,s}^{-1}(\overline{\Pi}_{N_{2},r_{2}}^{w_{2}}).$
For a point $v$ in $\Pi_{N_{1}+N_{2},r}$, let $\mu$ be in $X_{G}$ such that $U_{v,r}$ lies in $O^{\mu}.$ The part of the fibre of the map $\pi$ over $v$ that contributes to $\pi_{!}(\mcI^{w_{2}}\bt L_{w_{1}})$ is 
$$\{ gI_{G}\in \overline{\flagvar{G}^{w_{1}}}\vert g^{-1}v\in \overline{\Pi}_{N_{2},r_{2}}^{w_{2}}\}.$$
The latter scheme is empty unless $\langle \mu, \check{\omega}_{n}\rangle\geq \langle \lambda_{1}+\lambda_{2},\check{\omega}_{n}\rangle.$ It follows that $\heckefunc{G}(L_{w_{1}},\mcI^{w_{2}})$ lies in $\Filt^{d}$ with $d=\langle \lambda_{1}+\lambda_{2},\check{\omega}_{n}\rangle.$
\end{proof}

\begin{theorem}
\label{filt}
Assume $n\leq m.$ The filtration $\Filt^d$ on $P_{I_H\times I_G}(\Pi(F))$ is compatible with the grading on $P_{I_G}(\flagvar{G})$ defined by the connected components. Namely set $w_1=t^{\lambda_{1}}\tau$ with $\tau$ in $S_{n,m},$ $\lambda_{1}$ in $X_{G},$ and let  $m_1=\langle\lambda_1,\check{\omega}_n\rangle.$ Then $\heckefunc{G}(L_{w_1},.)$ sends an irreducible object of $\Filt^d$ to a direct sum of shifted objects of $\Filt^{d+m_1}$.
\end{theorem}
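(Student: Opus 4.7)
The plan is to derive the theorem as a direct strengthening of the preceding proposition, using the Beilinson--Bernstein--Deligne decomposition theorem to pass from a support statement to a decomposition into shifted simples. Since $\Filt^{d}$ is by definition the Serre subcategory generated by the $\cI^{w_{2}}$ with $w_{2}=(\lambda_{2},\tau_{2})\in X_{G}\times S_{n,m}$ satisfying $\langle\lambda_{2},\check{\omega}_{n}\rangle\geq d$, any irreducible object of $\Filt^{d}$ is of this form, and it suffices to prove the statement for a single such $\cI^{w_{2}}$. Note that $m_{1}=\langle\lambda_{1},\check{\omega}_{n}\rangle$ depends only on the connected component of $\flagvar{G}$ supporting $L_{w_{1}}$, so the conclusion is indeed a statement about the grading by $\pi_{1}(G)$.

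By the preceding proposition applied to the pair $(w_{1},w_{2})$, the convolution $\heckefunc{G}(L_{w_{1}},\cI^{w_{2}})$ is set-theoretically supported on the union of the strata $\Pi_{\mu,r_{1}+r_{2}}$ with $\langle\mu,\check{\omega}_{n}\rangle\geq m_{1}+\langle\lambda_{2},\check{\omega}_{n}\rangle\geq m_{1}+d$. Consequently, any $I_{H}\times I_{G}$-equivariant simple perverse sheaf appearing as a constituent of $\heckefunc{G}(L_{w_{1}},\cI^{w_{2}})$ must be of the form $\cI^{w}$ for some $w=(\mu,s')\in X_{G}\times S_{n,m}$ with $\langle\mu,\check{\omega}_{n}\rangle\geq m_{1}+d$, and is therefore an object of $\Filt^{d+m_{1}}$.

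To upgrade this support information into an actual direct sum decomposition, I apply the decomposition theorem to $\heckefunc{G}(L_{w_{1}},\cI^{w_{2}})\simeq\pi_{!}(\cI^{w_{2}}\bt L_{w_{1}})$. The twisted product $\cI^{w_{2}}\bt L_{w_{1}}$ was identified in the proof of the previous proposition as the IC-sheaf of $act_{q,s}^{-1}(\overline{\Pi}_{N_{2},r_{2}}^{w_{2}})$, hence pure, and the projection $\pi:\Pi_{N_{2},r_{2}}\newtimes\overline{\flagvar{G}^{w_{1}}}\to\Pi_{N_{1}+N_{2},r}$ is proper because the Schubert closure $\overline{\flagvar{G}^{w_{1}}}$ is a finite-dimensional projective variety. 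The decomposition theorem then yields a direct sum decomposition of $\heckefunc{G}(L_{w_{1}},\cI^{w_{2}})$ into shifted simple perverse sheaves on $\Pi_{N_{1}+N_{2},r}$, and combining this with the support constraint of the previous paragraph together with the classification of simples of $P_{I_{H}\times I_{G}}(\Pi(F))$ recalled in Section \ref{notation} gives the desired conclusion.

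The main delicate point is to ensure that the simple summands produced by the decomposition theorem are genuinely $I_{H}\times I_{G}$-equivariant, so that they really fall into the enumeration by $X_{G}\times S_{n,m}$ rather than being more exotic perverse sheaves on $\Pi_{N_{1}+N_{2},r}$; this is guaranteed by the $I_{H}\times I_{G}$-equivariance built into the construction of $\heckefunc{G}$ recalled in Appendix \ref{Appendix}. One must also check that the choices of truncation levels $N_{1}+N_{2}$ and $r$ are large enough so that every $\cI^{w}$ appearing as a summand is the IC-sheaf of a stratum inside $\Pi_{N_{1}+N_{2},r}$ in the sense of the notation of Section \ref{notation}; this is ensured by the hypothesis on $r_{1},r_{2},N_{1},N_{2}$ together with Lemma \ref{compatible}.
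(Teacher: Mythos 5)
Your proposal is correct, but it is organized rather differently from the paper's own argument, so a comparison is worthwhile. The paper's proof of Theorem \ref{filt} consists precisely of the geometric estimate that you delegate to the preceding proposition: for a pair $(v,gI_{G})$ in the convolution diagram one sets $L'=gL$, equips it with the flag $gL_{i}$, and uses the relative dimension identity $\dim(L,L')+\dim(L',v(U^{*})+t^{r_{2}}L')=\dim(L,v(U^{*})+t^{r_{2}}L')$ together with $\dim(L,L')=\langle\lambda_{1},\check{\omega}_{n}\rangle$, $\dim(L',v(U^{*})+t^{r_{2}}L')\geq\langle\lambda_{2},\check{\omega}_{n}\rangle$ and $t^{r}L\subset t^{r_{2}}L'$, to conclude that every point of the support of $\heckefunc{G}(L_{w_{1}},\cI^{w_{2}})$ lies in a stratum $\Pi_{\mu,r}$ with $\langle\mu,\check{\omega}_{n}\rangle\geq d+m_{1}$; the passage from this support bound to a direct sum of shifted objects is left implicit in the paper. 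You instead quote the proposition for the support bound --- which is logically admissible, since it is stated and proved before the theorem, although its proof merely asserts the emptiness claim that the paper's proof of the theorem is written to justify by the computation above --- and you make explicit exactly the step the paper glosses over: properness of $\pi$ (closed immersion into $\Pi_{N_{1}+N_{2},r}\times\overline{\flagvar{G}^{w_{1}}}$ followed by a projection with projective fibre), purity of $\cI^{w_{2}}\bt L_{w_{1}}$ as a twisted product of IC-sheaves (after spreading out to a finite subfield, as the paper itself does when weights are invoked), the BBD decomposition into shifted simple perverse sheaves, and equivariance of the summands under the connected quotients $K_{s}$, so that the classification of simples by $X_{G}\times S_{n,m}$ applies and each summand, having its defining orbit inside the high-degree locus, is an $\cI^{w}$ lying in $\Filt^{d+m_{1}}$. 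The two routes are complementary: the paper supplies the lattice-dimension inequality that is the real content of the support constraint, while you supply the decomposition-theoretic upgrade that the phrase ``direct sum of shifted objects'' actually requires. If you want your write-up to stand on its own, you should either reproduce the relative-dimension computation or point explicitly to it, since relying on the proposition alone means the key inequality is only asserted, not proved, in the text you cite.
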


\begin{proof}
We use the notation of Lemma $\ref{compatible}.$ For $gI_{G}$ in $\overline{\flagvar{G}^{w_{1}}},$ let $L^{'}=gL$ and equip $L^{'}$ with the flag $L_{i}^{'}=gL_{i}$ for $i=1,\dots,n$.  Let $v$ be the map from $U^{*}$ to $t^{-N_{2}}L^{'}/ t^{r}L$ such that its composition with 
$$t^{-N_{2}}L^{'}/ t^{r}L\longrightarrow t^{-N_{2}}L^{'}/ t^{r_{2}}L^{'}$$
lies in the closure of the orbit $(U_{N_{2}r_{2}}\otimes L^{'}) ^{w_{2}}.$ The latter scheme is the corresponding orbit on $U_{N_{2},r_{2}}\otimes L^{'}.$ The relative dimension formula  gives us
$$\dim(L,L^{'})+\dim(L^{'},v(U^{*})+t^{r_{2}}L^{'})=\dim(L,v(U^{*})+t^{r_{2}}L^{'}).$$
Moreover, we have
$$\dim(L^{'},v(U^{*}+t^{r_{2}}L^{'}))\geq \langle \lambda_{2},\check{\omega}_{n} \rangle $$
and 
$$\dim(L,L^{'})=\langle\lambda_{1},\check{\omega}_{n}\rangle.$$
This leads to $\dim(L,v(U^{*}+t^{r_{2}}L^{'}))\geq \langle\lambda_{1}+\lambda_{2},\check{\omega}_{n}\rangle.$ On the other hand we have $t^{r}L\subset t^{r_{2}}L^{'}$ so 
$\dim(L,v(U^{*})+t^{r}L)$ can not be strictly smaller than  $\dim(L,v(U^{*})+t^{r_{2}}L^{'}).$ 
\end{proof}
As a consequence of this theorem, $K(P_{I_{H}\times I_{G}}(\Pi(F)))$ is a filtered module over $\iwahorihecke{G},$ so that graded part $\oplus_{d\in\Z}\Filt^{d}/ \Filt^{d+1}$ is a left $\iwahorihecke{G}$-module. 
\section{Kudla's filtration and some submodules}
 \subsection{Case \texorpdfstring{$n=m$}{n=m}}$ $ 
\label{nm}

In this subsection we will assume $n=m$. We will show that the submodule in $K(D_{I_H\times I_G}(\Pi(F)))$ generated by $\cI^{w_{0}!}$ over $K(P_{I_G}(\Fl_G))$ is free of rank one (and is also preserved by the action of  $P_{I_H}(\Fl_H)$). We also precise an equivalence of categories $\tilde\sigma: P_{I_H}(\Fl_H)\,\isom\, P_{I_G}(\Fl_G)$ which defines at the level of functions an anti-involution of Iwahori-Hecke algebras $\iwahorihecke{G}$ and $\iwahorihecke{H}$. By means of this equivalence we relate the actions of Hecke functors for $H$ and $G$ on the submodule generated by $\cI^{w_{0}}$ (resp. $\cI^{w_{0}!}$). 
\par\medskip
Denote by $w_{0}$ the longest element of $W_{G}.$   Let $\mcI^{w_{0}}$ be the IC-sheaf of the orbit $\Pi_{0,1}^{w_{0}}.$ Let $\Pi_{\mcI^{w_{0}}}$ be subscheme of $\Hom(U_0^*, L_0)$ consisting of elements $v$ such that $v$ sends $\Vect(u_n^*, \ldots, u_{n-i+1}^*)$ to 
$L_i=\Vect(e_1,\dots,e_i),$ for $i=1,\dots,n$. Note that $\Pi_{\mcI^{w_{0}}}$ is an affine space, the closure of $\Pi_{0,1}^{w_{0}}.$ Thus $\mcI^{w_{0}}$ is the constant perverse sheaf on $\Pi_{\mcI^{w_{0}}}.$ For any $w$ in the affine extended Weyl group $\weyl{G},$ let us describe $\heckefunc{G}(L_w, \mcI^{w_{0}}).$ Let $w=t^{\lambda}\tau,$ where $\tau\in W_{G}$ and $\lambda\in X_{G}.$  Let $N,r$ be two integers with $N+r\geq 0$  such that the following condition is verified: for any $\nu$ in $W_{G}.\lambda$ we have
\begin{equation}
\label{condition2}
\langle \nu,\check{\omega}_{1}\rangle< r \hspace{2mm}\mathrm{and}\hspace{2mm} \langle -\nu,\check{\omega}_{1} \rangle \leq N. 
\end{equation}
For any element $gI_G$  of  $\overline{\flagvar{G}^{w}}$, we put $L'=gL$ and  we equip $L'/tL'$ with the flag $L'_{i}=gL_{i},$ for $i=1,\dots,n.$ 
Let $\Pi_{\mathcal{I}^{w_{0}},r}\tilde{\times}\overline{\flagvar{G}^{w}}$ be the scheme classifying pairs $(v,gI_{G}),$ where $gI_G$ is in $\overline{\flagvar{G}^{w}}$ and  $v$ is a map $U^*\to L'/t^{r}L$ such that the induced map
\begin{equation}
\label{2}
\overline{v}:U^{*}/tU^{*}\longrightarrow L'/tL'
\end{equation} 
sends $\Vect(u_n^*,\dots,u_{n-i}^*)$ to $L'_{i+1},$ for $i=0,\dots,n-1.$  Let 
$$
\pi:\Pi_{\mathcal{I}^{w_{0}},r}\tilde{\times}\overline{\flagvar{G}^{w}}\longrightarrow\Pi_{N,r}
$$
be the map sending $(v, gI_G)$ to $v$.
The second projection $pr:\Pi_{\mathcal{I},r}\newtimes\overline{\flagvar{G}^{w}}\longrightarrow\overline{\flagvar{G}^{w}}$ is a locally trivial fibration with fibres isomorphic to the affine space. Let $\qelbar\bt L_w$ be the perverse sheaf  $pr^*L_w[\dimrel(pr)]$. 
Then by definition 
$$\heckefunc{G}(L_{w},\mcI^{w_{0}})\iso \pi_{!}(\qelbar\bt L_w).$$
Remark that the condition $\eqref{condition2}$ initially appears in the construction of the irreducible objects $\mathcal{I}^{w}$ in $P_{I_{H}\times I_{G}}(\Pi(F))$ in \cite{BFH2}.  
\begin{lemma}
For any $w\in\wt W_G$ the perverse sheaf $\cI^{ww_0}$ appears in $\heckefunc{G}(L_w,\mcI^{w_0})$ with multiplicity one.
\end{lemma}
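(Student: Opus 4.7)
I would prove multiplicity one by a direct computation of the generic fibre of the convolution map. Writing $w=t^{\lambda}\tau$ with $\tau\in W_G$ and $\lambda=(a_1,\ldots,a_n)$, and setting $\sigma=\tau w_0$, a convenient standard representative of the orbit $\Pi_{N,r}^{ww_0}$ is the point $v_0$ given by $v_0(u_i^*)=t^{a_{\sigma(i)}}e_{\sigma(i)}$. The strategy is to show that the fibre $\pi^{-1}(v_0)\subset \Pi_{\mathcal{I}^{w_0},r}\tilde{\times}\overline{\flagvar{G}^{w}}$ is a single reduced point $(v_0,g_wI_G)$ lying above the open Schubert cell $\flagvar{G}^{w}$. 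Given this, proper base change together with the fact that $L_w$ restricts to a shift of $\Qlb$ on $\flagvar{G}^{w}$ forces the stalk of $\heckefunc{G}(L_w,\mathcal{I}^{w_0})=\pi_!(\Qlb\bt L_w)$ at $v_0$ to be one-dimensional in the perverse degree appropriate for $\mathcal{I}^{ww_0}$, and the decomposition theorem applied to the proper map $\pi$ translates this into the existence of precisely one copy of $\mathcal{I}^{ww_0}$.

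The core step is therefore the existence and uniqueness of a coset $gI_G\in\overline{\flagvar{G}^{w}}$ such that $v_0$ factors through $gL/t^rL$ and the induced map $\bar{v}_0:U^*/tU^*\to gL/tgL$ sends $\mathrm{Vect}(u_n^*,\ldots,u_{n-i}^*)$ into $gL_{i+1}/tgL$ for $i=0,\ldots,n-1$. Existence is immediate by taking $g=g_w$ a standard representative of the Iwahori double coset of $w$, whose associated flag is in the standard position dictated by $w$ relative to $\Lambda_\bullet$. For uniqueness I would reconstruct $gL$ and its flag directly from $v_0$: the $\locring$-module $v_0(U^*)\cdot\locring$ equals $\bigoplus_j\locring\, t^{a_j}e_j$, and the mod-$t$ flag compatibility, together with the lattice condition $v_0(U^*)\subset gL/t^rL$, forces recursively the line $gL_{i+1}/gL_i\bmod t$ to be spanned by the residue of the prescribed $v_0(u_{n-i}^*)$. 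Combined with the Iwahori-orbit type encoded in $w$, this pins down $gL$ uniquely as a lattice equipped with its flag. A dimension count $\dim\Pi_{N,r}^{ww_0}=\dim\Pi_{\mathcal{I}^{w_0},r}+\ell(w)$ then confirms that $\pi$ is birational onto its image over the stratum $\Pi_{N,r}^{ww_0}$ and that the unique lift of $v_0$ lies in the open cell $\flagvar{G}^{w}$, as required.

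\textbf{Main obstacle.} The hard part is the uniqueness in the fibre analysis when $\tau$ is non-trivial and when some coordinates $a_j$ are negative, since the flag-level and lattice-level conditions then interact in a subtle way. One must track carefully which of the residues $\bar{v}_0(u_i^*)$ vanish and which survive, and feed this information into the recursion on $i$ so that each successive piece $gL_{i+1}/gL_i$ is forced to be one-dimensional over the residue field in exactly the direction prescribed by $w$. This is a finite linear-algebra bookkeeping mirroring the Iwahori--Bruhat combinatorics of the element $w$, and once it is carried out, the claimed multiplicity one follows by the proper base change and decomposition-theorem argument outlined above.
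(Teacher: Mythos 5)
Your proposal is correct and takes essentially the same route as the paper: the paper's proof likewise rests on the observation that over the locus where $gI_{G}\in\flagvar{G}^{w}$ and the induced map $\overline{v}$ is an isomorphism, $\pi$ maps isomorphically onto the open orbit $\Pi_{N,r}^{ww_0}$, so that $\pi$ factors through $\overline{\Pi}_{N,r}^{ww_0}$ and the decomposition theorem gives multiplicity one. Your point-fibre computation at the standard representative is just a more explicit version of this (the cleanest way to pin down the lattice is $gL\supset v_0(U^{*})+t^{r}L=t^{\lambda}L$ together with equality of relative dimensions, forcing $gL=t^{\lambda}L$ and then the flag, i.e. the unique lift is $wI_{G}$), and the containment of the image in $\overline{\Pi}_{N,r}^{ww_0}$ --- needed so that no summand with larger support can contribute to the stalk at $v_0$ in the relevant degree --- indeed follows from your dimension count together with the irreducibility of the twisted product.
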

\begin{proof}
 Consider the open subscheme $V$ in $\Pi_{\mathcal{I},r}\newtimes\overline{\flagvar{G}^{w}}$ given by the conditions that $gI\in \Fl^w_G$, and that the map $\eqref{2}$ is surjective. Clearly, $\pi(V)$ is contained in $\Pi_{N,r}^{ww_0}$. So, $\pi$ can be viewed as a map
$$
\pi:\Pi_{\mathcal{I},r}\tilde{\times}\overline{\flagvar{G}^{w}}\longrightarrow \bar \Pi_{N,r}^{ww_0}
$$

The restriction of the complex $\heckefunc{G}(L_w,\mcI^{w_{0}})$  to $\Pi_{\lambda,r}\subset \Pi_{N,r}$ identifies with $\mathcal{I}^{ww_0}$.
\end{proof}

\begin{proposition}
\label{Lw!Iw0}
For any $w$ in $\weyl{G}$ there is a canonical isomorphism $\heckefunc{G}(L_{w!},\mathcal{I}^{w_{0}!})\,\isom\, \mathcal{I}^{ww_{0}!}.$ 
\end{proposition}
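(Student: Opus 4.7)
The plan is to mimic the proof of the preceding lemma, but to exploit the fact that working with $!$-extensions on both sides controls the support of the twisted product, so that the convolution can be computed on a single open stratum rather than a closure.

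First, I would set up the convolution exactly as above the statement: for $N,r$ large enough (and $w=t^{\lambda}\tau$) we have
$$\heckefunc{G}(L_{w!},\mcI^{w_{0}!})\,\iso\,\pi_{!}\bigl(\mcI^{w_{0}!}\,\bt\,L_{w!}\bigr),$$
where $\pi: \Pi_{\mcI^{w_{0}},r}\newtimes\overline{\flagvar{G}^{w}}\to\Pi_{N,r}$ sends $(v,gI_G)$ to $v$. Next, using that $L_{w!}=j_{w!}\qelbar[\ell(w)]$ and that $\mcI^{w_{0}!}$ is by definition the extension by zero from the open orbit $\Pi_{0,1}^{w_{0}}\hookrightarrow\Pi_{\mcI^{w_{0}}}$, I would identify the twisted product with the extension by zero of a constant perverse sheaf from the open subscheme $V\subset \Pi_{\mcI^{w_{0}},r}\newtimes\overline{\flagvar{G}^{w}}$ cut out by the conditions that $gI_{G}\in\flagvar{G}^{w}$ and that the induced map $\bar{v}:U^{*}/tU^{*}\to L'/tL'$ in $\eqref{2}$ is an isomorphism sending $\mathrm{Vect}(u_{n}^{*},\dots,u_{n-i}^{*})$ \emph{onto} $L'_{i+1}$ for each $i$.

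Second, I would check that the restriction $\pi\vert_{V}$ is an isomorphism of $V$ onto the locally closed subscheme $\Pi_{N,r}^{ww_{0}}$. This is the heart of the argument. The map is clearly $I_{H}\times I_{G}$-equivariant, so its image lands in a single orbit; reading off the normal forms $\eqref{def_of_IH_IG_orbit}$ with $w=t^{\lambda}\tau$ shows that image orbit is parametrized by $(\lambda,(I,\tau w_{0}))$, i.e.\ exactly $ww_{0}$. For bijectivity, the openness condition on $\bar{v}$ shows that the flag $L'_{\bullet}$ is uniquely reconstructed from $v$ (as the image filtration of $\bar{v}$ on $\mathrm{Vect}(u_{n}^{*},\dots,u_{n-i}^{*})$ reduced mod $tL'$, lifted back inside $t^{-N_{1}}L/t^{r}L$), hence $gI_{G}$ is uniquely determined; this gives a set-theoretic bijection which, by the explicit parametrizations, upgrades to an isomorphism of schemes.

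Finally, having $\pi\vert_{V}\,\isom\,\Pi_{N,r}^{ww_{0}}$, proper base change along $\pi$ (valid because $\pi$ is $\overline{\flagvar{G}^{w}}$-proper and the source is the pullback of the base) gives
$$\pi_{!}\bigl(\mcI^{w_{0}!}\,\bt\,L_{w!}\bigr)\,\iso\,(i_{ww_{0}})_{!}\,\qelbar[d]\,\iso\,\mcI^{ww_{0}!},$$
where $i_{ww_{0}}:\Pi_{N,r}^{ww_{0}}\hookrightarrow\Pi_{N,r}$ and $d=\dim\Pi_{N,r}^{ww_{0}}$; the dimension count matches since $\dim V=\dim\Pi_{\mcI^{w_{0}},r}^{w_{0}}+\ell(w)$ corresponds under the iso with the dimension of the target orbit. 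The main obstacle will be Step two: verifying that $\pi\vert_{V}$ is honestly an isomorphism (and not merely birational) requires the uniqueness of the flag recovered from $v$, which in turn depends crucially on the \emph{open} condition on $\bar{v}$. Once this is in place, the rest is a clean base change formality and the result follows independently of $N,r$, so the isomorphism is canonical.
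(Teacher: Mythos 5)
Your proposal is correct and follows essentially the same route as the paper: restrict to the open locus where $gI_{G}\in\flagvar{G}^{w}$ and $\overline{v}$ is an isomorphism, observe that $\heckefunc{G}(L_{w!},\mcI^{w_{0}!})$ is the $!$-pushforward of the (shifted) constant sheaf from that locus, and conclude because the restricted projection is an isomorphism onto the orbit $\Pi_{N,r}^{ww_{0}}$. Your extra remarks (recovering $L'$ and its flag from $v$ via the image filtration, and the identification of the image orbit from the normal forms) just make explicit the isomorphism-onto-its-image claim that the paper states without detail.
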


\begin{proof}
Let $\Pi_{\mathcal{I}^{w_{0}},r}^{0}\newtimes \flagvar{G}^{w}\subset \Pi_{\mathcal{I}^{w_{0}},r}\newtimes \overline{\flagvar{G}^{w}}$ be the open subscheme of pairs  $(v,gI_{G})$  such that  $gI_G$ is in  $\flagvar{G}^{w}$
and the map $\overline{v}$ in $\eqref{2}$  is an isomorphism. For any points $(v,gI_{G})$ in this subscheme the map $\overline{v}$ is an isomorphism between $\Vect(u^*_1,\dots,u^*_{n-i})$ and $L'_{i+1},$ for $i=0,\dots,n-1.$ Let 
$$
\pi^{0}:\Pi_{\mathcal{I}^{w_{0}},r}^{0}\newtimes \flagvar{G}^{w}\longrightarrow \Pi_{N,r}
$$
be the restriction of $\pi$ to $\Pi_{\mathcal{I}^{w_{0}},r}^{0}\newtimes \flagvar{G}^{w}$.
Thus by definition we have 
$$
\heckefunc{G}(L_{w!},\mathcal{I}^{w_{0}!})\iso {\pi^{0}_{!}(\qelbar\bt L_w)}
$$
The image of $\pi^{0}$ is equal to  $\Pi_{N,r}^{ww_0}$ and the map $\pi^{0}$ is an isomorphism onto its image.
\end{proof}
\begin{definition}
\label{antiauto}
For $\lambda$ in $X_{G}$ and $\tau$ in $W_{G},$ let $w\longrightarrow \overline{w}$ be the map  from $\weyl{G}$ to  $\weyl{G}$  defined by 
$$\overline{t^{\lambda}\tau} \longrightarrow t^{\tau^{-1}(\lambda)} \tau^{-1}$$
This is an anti-automorphism of $\weyl{G}$. Note that $\overline{w_{0}}=w_{0}.$
\end{definition}
The following analog of Proposition~\ref{Lw!Iw0} for $H$ instead of $G$ is proved similarly.
  \begin{proposition}
\label{Lw!w0G}
For $w$ in $\weyl{H}$, the complex $\heckefunc{H}(L_{w!},\mcI^{w_{0}!})$ is canonically isomorphic to $\mcI^{\overline{ww_{0}}!}.$ The sheaf $\mcI^{\overline{ww_{0}}}$ occurs in $\heckefunc{H}(L_{w},\mcI^{w_{0}})$ with multiplicity one. $\square$
\end{proposition}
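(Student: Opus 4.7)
The plan is to adapt the proof of Proposition~\ref{Lw!Iw0} by swapping the roles of the two factors. A point $v\in\Pi$ may be viewed either as a map $v:U^*\to L(F)$ or, by transposition, as $\tilde v:L^*\to U(F)$; the two pictures are exchanged by a duality that sends the $I_H\times I_G$-orbit $\Pi_{N,r}^{w}$ labelled by $w\in \weyl{G}\times S_{n,n}$ to $\Pi_{N,r}^{\bar w}$, with $\bar w$ as in Definition~\ref{antiauto}. In the $\tilde v$ picture, the natural left $H(F)$-action on the $U$-factor makes the construction of $\heckefunc{H}$ strictly parallel to that of $\heckefunc{G}$, so the argument of Proposition~\ref{Lw!Iw0} can be copied verbatim on the $U$-side.

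Concretely, I would form the twisted product $\Pi_{\mcI^{w_0},r}\newtimes \overline{\flagvar{H}^w}$ classifying pairs $(v,hI_H)$ with $hI_H\in\overline{\flagvar{H}^w}$ and $\tilde v:L^*\to U'/t^rU$, $U'=hU$, whose reduction modulo $t$ sends $\on{Vect}(e_n^*,\ldots,e_{n-i}^*)$ into $U'_{i+1}=hU_{i+1}$ for $0\le i\le n-1$; denote by $\pi$ the first projection. Restricting $\pi$ to the open subscheme $\Pi^{0}_{\mcI^{w_0},r}\newtimes \flagvar{H}^w$ on which $hI_H\in\flagvar{H}^w$ and the reduced map is an isomorphism yields a map $\pi^0$ whose push-forward computes $\heckefunc{H}(L_{w!},\mcI^{w_0!})\iso \pi^0_!(\qelbar\bt L_w)$. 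As in Proposition~\ref{Lw!Iw0}, I would then verify that $\pi^0$ is an isomorphism onto its image.

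The hard part, and the place where the anti-automorphism $\overline{(\,\cdot\,)}$ of Definition~\ref{antiauto} enters, is to identify this image with $\Pi_{N,r}^{\overline{ww_0}}$. Translating the flag and cocharacter data across the transposition $v\leftrightarrow \tilde v$ has the effect of replacing a permutation $\tau\in W_G$ by $\tau^{-1}\in W_H$ and a cocharacter $\lambda$ by $\tau^{-1}(\lambda)$, which is precisely the rule defining $\overline{w}$; combined with the $w_0$ already present in $\mcI^{w_0!}$ this produces the label $\overline{ww_0}$. For the multiplicity-one statement one observes, as in the lemma preceding Proposition~\ref{Lw!Iw0}, that the above open subscheme is dense in $\Pi_{\mcI^{w_0},r}\newtimes \overline{\flagvar{H}^w}$ and $\pi^0$ is an isomorphism on it; hence $\mcI^{\overline{ww_0}}$ appears in $\heckefunc{H}(L_w,\mcI^{w_0})$ with multiplicity one.
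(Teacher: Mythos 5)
Your proposal is correct and matches the paper's intent exactly: the paper itself only remarks that Proposition~\ref{Lw!w0G} "is proved similarly" to Proposition~\ref{Lw!Iw0}, i.e.\ by repeating the twisted-product construction on the $U$-side, and your transposition argument $v\leftrightarrow\tilde v$, which converts $\tau$ into $\tau^{-1}$ and $\lambda$ into $\tau^{-1}(\lambda)$ and hence produces the label $\overline{ww_0}$ of Definition~\ref{antiauto}, is precisely the mechanism implicit there (and made explicit later in the paper's own $H$-side computations around \eqref{pi0}). The multiplicity-one part is likewise handled as in the lemma preceding Proposition~\ref{Lw!Iw0}, so no gap.
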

The following example shows that $\heckefunc{G}(L_w,\mcI^{w_{0}})$ is not always irreducible and gives us some interesting objects in $D_{I_{H}\times I_{G}}(\Pi(F))$.
 \begin{corollary} Let $1\le i<n$. 
Let $w$ be the transposition $(i,i+1)$ in  $W_{G},$ $\lambda=(0,\dots,0,1,0,\dots,0),$ where $1$ appears on the $i^{th}$ position, and  $w'=t^{\lambda}w_0.$ Then
$$\heckefunc{G}(L_w,\mcI^{w_{0}})\iso \mathcal{I}^{ww_0}\oplus \mathcal{I}^{w^{'}}.$$
\end{corollary}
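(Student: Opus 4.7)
The plan is to compute $\heckefunc{G}(L_w, \mcI^{w_{0}})$ directly as $\pi_{!}(\mcI^{w_{0}} \bt L_w)$ by analyzing the proper map
\[
\pi:\Pi_{\mathcal{I}^{w_{0}},r}\newtimes\overline{\flagvar{G}^{w}}\longrightarrow\Pi_{N,r},\qquad (v,gI_{G})\mapsto v,
\]
introduced before Proposition~\ref{Lw!Iw0}. The source is smooth and irreducible, being an affine-space bundle over $\overline{\flagvar{G}^{w}}\cong \mathbb{P}^{1}$, and $\mcI^{w_{0}} \bt L_w$ is, up to perverse shift, the constant sheaf on it.

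First I would carry out a fiber analysis of $\pi$. Given $v \in \Pi_{N,r}$ satisfying $\bar{v}(U^{*}_{(j)})\subset L_{j}$ for $j\neq i$, the fiber $\pi^{-1}(v)$ consists of those $L'_{i}\in \overline{\flagvar{G}^{w}}$ for which the compatibility condition~\eqref{2} holds. Since $L'_{j}=L_{j}$ for $j\neq i$, this reduces to $L_{i-1}+\qelbar\,\bar{v}(u^{*}_{n-i+1})\subset L'_{i}$, yielding three cases: the fiber equals $\overline{\flagvar{G}^{w}}\cong\mathbb{P}^{1}$ if $\bar{v}(u^{*}_{n-i+1})\in L_{i-1}$; it is the single point $L_{i-1}+\qelbar\,\bar{v}(u^{*}_{n-i+1})$ if $\bar{v}(u^{*}_{n-i+1})\in L_{i+1}\setminus L_{i-1}$; and it is empty otherwise.

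Next I would identify the image of $\pi$ and its exceptional stratum. The image is cut out in $\Pi_{N,r}$ by the linear conditions $\bar{v}(U^{*}_{(j)})\subset L_{j}$ for $j\neq i$ together with $\bar{v}(u^{*}_{n-i+1})\in L_{i+1}$; dimension counting combined with the basepoint description~\eqref{def_of_IH_IG_orbit} identifies it with the irreducible closed subvariety $\overline{\Pi}_{N,r}^{ww_{0}}$. The ``$\mathbb{P}^{1}$-fiber'' locus is obtained by further imposing $\bar{v}(u^{*}_{n-i+1})\in L_{i-1}$, of codimension $\dim L_{i+1}-\dim L_{i-1}=2$ in the image; matching its generic point with the basepoint of $\Pi_{N,r}^{w'}$ (where $v(u^{*}_{n-i+1})=t\,e_{i}$, reducing to $0\in L_{i-1}$ modulo $t$) shows that this exceptional locus is precisely $\overline{\Pi}_{N,r}^{w'}$.

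Finally, since the fiber dimension $1$ over the codimension-$2$ exceptional locus equals half the codimension (and all other fibers are points), the map $\pi$ is semi-small. The decomposition theorem applied to a semi-small proper map from a smooth source with constant coefficients then produces
\[
\heckefunc{G}(L_{w},\mcI^{w_{0}})\iso \mcI^{ww_{0}}\oplus \mcI^{w'},
\]
with each summand arising with multiplicity one, from $H^{0}$ of a point over $\Pi^{ww_{0}}$ and $H^{2}(\mathbb{P}^{1})$ over $\Pi^{w'}$ respectively; this is consistent with the multiplicity-one assertion for $\mcI^{ww_{0}}$ in the lemma preceding Proposition~\ref{Lw!Iw0}. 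The main obstacle I anticipate is pinning down the set-theoretic equality of the exceptional locus with $\overline{\Pi}_{N,r}^{w'}$ (beyond the matching of generic points) and ruling out any intermediate $I_{H}\times I_{G}$-orbit of codimension $1$ that would violate semi-smallness or contribute an extra summand; once the orbit-dimension bookkeeping from Section~\ref{notation} is done, the decomposition theorem closes the argument.
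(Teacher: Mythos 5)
Your argument is essentially the paper's own: the paper also computes $\pi_{!}$ for the same correspondence, observing that $\pi$ is an isomorphism over the complement of a codimension-$2$ affine-space locus (your exceptional stratum, the paper's $Y_i'$ defined by $\bar v(u^*_{n-i+1})\in L_{i-1}$) and has $\mathbb{P}^1$-fibers over it, and then concludes by the same semismallness/decomposition reasoning, with your $Y$-loci matching the paper's $Y_i\simeq\overline{\Pi}^{ww_0}$ and $Y_i'\simeq\overline{\Pi}^{w'}$. The identification of the exceptional locus with $\overline{\Pi}^{w'}_{N,r}$, which you flag as the remaining obstacle, is left just as implicit in the paper, so your proposal is correct and not a different route.
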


\begin{proof} 
The variety $\overline{\flagvar{G}^{w}}$ classifies lattices $L_{0}^{'}$ endowed with a complete flag of lattices $L^{'}_{-1}\subset L^{'}_{0} \subset L_{1}^{'}\subset \dots$ such that $L_{i+n}'=t^{-1}L_i$ for all $i$, and 
$L^{'}_{j}=L_{j}$ unless $j=i$ mod $n$. Here $L_{j}$ is the standard flag on $L(F)$. So, $\overline{\flagvar{G}^{w}}$ identifies with the projective space of lines in $L_{i+1}/L_{i-1}.$ Let $Y_i$ be the  closed subscheme of $\Pi_{0,1}$ given by $v(W_j)\subset{L_j}$ for $j\neq i$. Here $\{W_j\}$ is the flag on $U_0^*$ preserved by $B_H$. Note that $Y_{i}$ is an affine space. 
Define a closed subscheme  $Y_{i}^{'}$ of $Y_{i}$ consisting of elements $v$ of $Y_{i}$ such that  $v(L_i)\subset L_{i-1}.$ Then $Y_{i}^{'}$ is also an affine space. 
 \par\medskip
Let $\Pi_{\mathcal{I}}\newtimes\overline{\flagvar{G}^{w}}$ be the scheme classifying pairs $(v,gI_{G}),$ where $gI_{G}$ is in $\overline{\flagvar{G}^{w}}$ and $v: U_0^*\to L_{0}$ such that $v(W_{j})\subset gL_j,$ for all $1\leq j\leq n.$ 
 We have the diagram 
$$
Y_i
\overset{\pi}{\longleftarrow}\Pi_{\mathcal{I}}\newtimes\overline{\flagvar{G}^{w}}\overset{pr}{\longrightarrow}\overline{\flagvar{G}^{w}}.$$
By definition of the Hecke operators one has 
$$\heckefunc{G}(L_w,\mcI^{w_{0}})=\pi_{!}(\qelbar\bt L_{w}).$$
For a point $v$ in $Y_{i}\backslash Y_{i}^{'}$ the fibre of the map $\pi$ over $v$  is reduced to a point and the map $\pi$ is an isomorphism over $Y_{i}\backslash Y_{i}^{'}$. The restriction of $\heckefunc{G}(L_w,\mcI^{w_{0}})$ to $Y_{i}\backslash Y_{i}^{'}$ is isomorphic to $\IC(Y_i)=\qelbar[\dim(Y_i)].$ On the other hand, the space $Y_{i}$ identifies with $\Pi_{N,r}^{ww_{0}}$. The fibre of $\pi$ over a point $v$ of $Y_{i}^{'}$ is isomorphic to $\mathbb{P}^1$. Since $Y'_i$ is an affine space of codimension 2 in $Y_i$, we are done. 
\end{proof} 

Proposition $\ref{filt}$ about the filtration in the special case of $\mathcal{I}^{w_{0}}$ yields the following:

\begin{corollary}
Let $w=t^{\lambda}\tau,$ where  $\lambda$ is in $X_{G}$ and $\tau$ is in $W_{G}$. Then if  $d=\langle\lambda,\check{\omega}_n\rangle,$  there exists $K$ in  $\Filt^{d+1}$ such that $\heckefunc{G}(L_{w},\mathcal{I}^{w_{0}}),\isom\, \mathcal{I}^{ww_0}\oplus K.$ $\square$
\end{corollary}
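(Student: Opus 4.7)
My plan is to combine Theorem \ref{filt}, which places $\heckefunc{G}(L_w,\cI^{w_0})$ inside $\Filt^d$, with the preceding Lemma that gives $\cI^{ww_0}$ as a multiplicity-one summand, and then to refine the filtration bound via an equality analysis of the dimension estimate underlying Theorem \ref{filt}.

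First I would apply Theorem \ref{filt} to the irreducible object $\cI^{w_0}\in\Filt^0$ and to the Hecke operator $L_w$, so that the shift is $m_1=\langle\lambda,\check{\omega}_n\rangle=d$. This writes
$$\heckefunc{G}(L_w,\cI^{w_0})\simeq \bigoplus_{i}\cI^{u_i}[k_i]$$
as a direct sum of shifted simple perverse sheaves, with every $u_i=(\mu_i,\sigma_i)$ satisfying $\langle\mu_i,\check{\omega}_n\rangle\ge d$. By the Lemma preceding Proposition \ref{Lw!Iw0}, the simple constituent $\cI^{ww_0}$ appears with multiplicity exactly one; since $ww_0=t^{\lambda}\tau w_0$, this constituent sits at the minimal allowed level $d$, i.e.\ in $\Filt^d\setminus\Filt^{d+1}$. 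Splitting it off yields $\heckefunc{G}(L_w,\cI^{w_0})\simeq \cI^{ww_0}\oplus K$ with $K$ a direct sum of shifted simples in $\Filt^d$, and the task reduces to showing that no remaining $\cI^{u_i}$ has $\langle\mu_i,\check{\omega}_n\rangle=d$.

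To exclude such summands I would revisit the chain of inequalities from the proof of Theorem \ref{filt}, namely
$$\dim(L,v(U^*)+t^rL)\;\ge\;\dim(L,L')+\dim(L',v(U^*)+t^{r_2}L')\;\ge\;d,$$
where the last step uses $\dim(L,L')=d$ and $\dim(L',v(U^*)+t^{r_2}L')\ge 0$. A summand at level exactly $d$ forces all these inequalities to be equalities, which implies $v(U^*)+t^{r_2}L'=L'$; since $n=m$, this is equivalent to $\bar v\colon U^*/tU^*\to L'/tL'$ being an isomorphism, and then $v(U^*)+t^rL=L'$ as well. Combined with the identification (from the proof of the preceding Lemma) of the restriction of the Hecke complex to $\Pi_{\lambda,r}$ with the indecomposable IC sheaf $\cI^{ww_0}|_{\Pi_{\lambda,r}}$, any hypothetical extra simple summand at level $d$ with $\mu=\lambda$ would produce a second indecomposable summand of this restriction, a contradiction. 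For $\mu\ne\lambda$, the support constraint $\pi\bigl(\Pi_{\mcI^{w_0},r}\newtimes\overline{\Fl^w_G}\bigr)\subset\overline{\Pi_{N,r}^{ww_0}}$ together with the equality analysis above pins $U_{v,r}$ down inside $\overline{O^\lambda}$, and a further inspection of the convolution diagram excludes such $\mu$.

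The main technical obstacle is precisely this last step: ruling out contributions with $\mu\ne\lambda$ yet $\langle\mu,\check{\omega}_n\rangle=d$. Because $\check{\omega}_n$ is constant on connected components of $\Gr_G$, the crude dimension estimate alone does not separate such $\mu$ from $\lambda$, and one needs the more refined structure of the map $\pi$ over the boundary $\overline{\Fl^w_G}\setminus\Fl^w_G$ --- essentially a semi-smallness or stratified transversality argument, done in the style of the explicit computation for $w$ a simple transposition that precedes this corollary.
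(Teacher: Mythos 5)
Your first two steps are exactly the derivation the paper intends: the corollary is stated there with only a $\square$, as a consequence of Theorem \ref{filt} applied to $\cI^{w_{0}}$ together with the Lemma preceding Proposition \ref{Lw!Iw0}, whose proof also shows that the restriction of $\heckefunc{G}(L_{w},\cI^{w_{0}})$ to $\Pi_{\lambda,r}$ is $\cI^{ww_{0}}$ (so the multiplicity-one constituent occurs without shift). The difficulty is the upgrade from $K\in\Filt^{d}$ to $K\in\Filt^{d+1}$, which is the only content of the corollary beyond Theorem \ref{filt}. You dispose of hypothetical level-$d$ summands $\cI^{u}[k]$ with $u=(\lambda,\sigma)$ by the restriction to $\Pi_{\lambda,r}$, which is fine; but for $u=(\mu,\sigma)$ with $\langle\mu,\check{\omega}_{n}\rangle=d$ and $\mu\neq\lambda$ you only assert that ``a further inspection of the convolution diagram excludes such $\mu$'' and then concede that a semi-smallness or transversality estimate is needed without supplying it. This case is not vacuous: the lattices $gL$ with $gI_{G}\in\overline{\flagvar{G}^{w}}$ sweep out $\overline{O^{\lambda}}$, which in general contains other $I_{G}$-orbits $O^{\mu}$ in the same connected component of $Gr_{G}$ (already for $n=m=2$, $\lambda=(1,-1)$ and $\mu=(0,0)$), so $\overline{\Pi}_{N,r}^{ww_{0}}$ does contain orbits at level exactly $d$ with $\mu\neq\lambda$. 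Over a point $v$ of such an orbit your equality analysis only pins down $gL=U_{v,r}$; the fibre of $\pi$ is then a subvariety of the finite flag variety of Iwahori refinements of $U_{v,r}$, possibly of positive dimension, and without a bound of the type $2\dim\pi^{-1}(v)<\dim(\Pi_{\cI^{w_{0}},r}\newtimes\overline{\flagvar{G}^{w}})-\dim\Pi_{N,r}^{u}$ nothing in your argument prevents a direct summand supported on $\overline{\Pi}_{N,r}^{u}$. So the proposal has a genuine gap at precisely the step that makes the statement stronger than Theorem \ref{filt}.

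To be fair, the paper itself offers no argument for this point, so you have correctly located where the content lies; but as written your proof does not close it. A route closer to the paper's own toolkit is to use that $\cI^{w_{0}}$ is the constant perverse sheaf on the affine space $\Pi_{\cI^{w_{0}}}$ and agrees with $\cI^{w_{0}!}$ away from the boundary of the open orbit, where $\overline{v}$ drops rank and hence the level is at least $1$; Proposition \ref{tildew} gives $\heckefunc{G}(L_{w},\cI^{w_{0}!})\iso\tilde{\mcI}^{ww_{0}}$, supported on the locus of level exactly $d$ where it coincides with $\cI^{ww_{0}}$, and Theorem \ref{filt} applied to the boundary constituents pushes everything else into $\Filt^{d+1}$. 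This settles the assertion in the Grothendieck group (which is what the subsequent statement about $\oplus_{d}\Filt^{d}/\Filt^{d+1}$ really uses); to obtain the direct-sum decomposition of the complex itself one still needs either your fibre-dimension estimate over the level-$d$ boundary strata or a purity or weight argument excluding cancelling pairs of shifted summands, neither of which appears in your write-up.
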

It also follows that for $n=m,$ the space $\oplus_{d\in\Z}\Filt^{d}/ \Filt^{d+1}$ is a free module of rank one over $\iwahorihecke{G}$ generated by $\mcI^{w_{0}}$.
Note that the  homomorphism of $\mathcal{H}_{I_{G}}$-algebras $\mathcal{H}_{I_{G}} \longrightarrow \mathcal{S} (\Pi(F))^{I_{H}\times I_{G}}$ sending 
$\mathcal{S}$ to  $\heckefunc{G}(\mathcal{S},\mathcal{I}^{w_{0}!})$
is injective. The submodule generated by $\mathcal{I}^{w_{0}!}$ is a free  module of rank one over each one of the Iwahori-Hecke algebra $\iwahorihecke{G}$ and $\iwahorihecke{H}.$  
\par\bigskip
We may stratify $\Pi_{N,r}$ in a slightly different way. Let $\theta$ be any element of $\pi_{1}(G)$ and let  
$\lambda$ be a lift of $\theta$ in $X_{G}$ satisfying condition $\eqref{condition2}$.
We define a locally closed subscheme  $\Pi_{N,r}^{\theta}$  of $\Pi_{N,r}$ as follows:  
$$\Pi_{N,r}^{\theta}=\{v:U^*\to t^{-N}L/t^{r}L\hspace{1mm}\textrm{such}\hspace{1mm} \textrm{that}\hspace{1mm} \dim(U_{v,r}/t^{r}L)=\dim (t^{\lambda}L/t^{r}L)\}.$$
This definition is in fact independent of the lift $\lambda$.
\par\medskip
 For a given $w=t^{\lambda}\tau$ in $\weyl{G}$, let $\theta$ be the image of $\lambda$ in $\pi_{1}(G).$ Let $\tilde{\mcI^{w}}$ be the extension by zero of $\mcI^{w}\vert_{\Pi_{N,r}^{\theta}}$ on $\Pi_{N,r}.$ Then we have the following result: 

\begin{proposition}
\label{tildew}
For any $w$ in $\weyl{G}$, we have two canonical isomorphisms
$$\heckefunc{G}(L_{w},\mathcal{I}^{w_{0}!})\iso \tilde{\mcI}^{ww_{0}}
\hspace{5mm} \heckefunc{H}(L_{w},\mathcal{I}^{w_{0}!})\iso \tilde{\mcI}^{\overline{ww_{0}}},$$
 where the anti-involution $w\to \overline{w}$ is defined in Definition $\ref{antiauto}.$
\end{proposition}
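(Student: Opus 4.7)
The plan is to imitate the proof of Proposition~\ref{Lw!Iw0} but for $\mcI^{w_0!}$ (viewed as the $!$-extension from the open orbit $\Pi_{0,1}^{w_0}$) rather than the IC-sheaf $\mcI^{w_0}$, and to track carefully that the result lands in a single connected component of $\Pi_{N,r}$. The key input is that the Bruhat order on $\weyl{G}$ preserves the image in $\pi_1(G)$, so the closure $\overline{\flagvar{G}^w}$ lies in a single connected component of $\flagvar{G}$.

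First, I would set up the convolution diagram restricted to the open subscheme $\Pi_{\mcI^{w_0},r}^{0}\tilde{\times}\overline{\flagvar{G}^{w}}$ of pairs $(v,gI_G)$ for which the induced map $\bar v:U^{*}/tU^{*}\to L'/tL'$ is an isomorphism (with $L'=gL$); since $\mcI^{w_0!}$ is extension by zero from this locus, we have
$$\heckefunc{G}(L_w,\mcI^{w_0!})\iso \pi^{0}_{!}(\qelbar\bt L_w),$$
where $\pi^{0}$ is the restriction of $\pi$. For any such $(v,gI_G)$, Nakayama's lemma applied to $\bar v$ gives $U_{v,r}=gL=L'$. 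Since $gI_G\in\overline{\flagvar{G}^{w}}$ and the Bruhat order preserves the $\pi_{1}(G)$-coset, $\dim(L'/t^{r}L)=\dim(t^{\lambda}L/t^{r}L)$ is constant over $\overline{\flagvar{G}^{w}}$. Therefore $v\in\Pi_{N,r}^{\theta}$, so $\pi^{0}$ factors as $j\circ \bar\pi^{0}$ with $j:\Pi_{N,r}^{\theta}\hookrightarrow \Pi_{N,r}$, and
$$\heckefunc{G}(L_w,\mcI^{w_0!})\iso j_{!}\bar\pi^{0}_{!}(\qelbar\bt L_w).$$

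Next, I would identify $\bar\pi^{0}_{!}(\qelbar\bt L_w)\iso \mcI^{ww_0}\vert_{\Pi_{N,r}^{\theta}}$. On the open cell $\Pi_{\mcI^{w_0},r}^{0}\tilde{\times}\flagvar{G}^{w}$, $\bar\pi^{0}$ is an isomorphism onto $\Pi_{N,r}^{ww_0}$ by the argument of Proposition~\ref{Lw!Iw0}, so both sides coincide as constant perverse sheaves on this open orbit. For the extension to the full stratum $\Pi_{N,r}^{\theta}$, note that $L_w$ is the intermediate extension of the constant sheaf along $\flagvar{G}^{w}\hookrightarrow\overline{\flagvar{G}^{w}}$ and that $\pi$ is proper. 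Moreover, $\bar\pi^{0}$ is injective on points: given $v$ with $\bar v$ an isomorphism, the lattice $L'=U_{v,r}$ and the flag $L'_{i}=\bar v(\Vect(u_{n}^{*},\ldots,u_{n-i+1}^{*}))$ are both determined by $v$, so $gI_G$ is reconstructed from $v$. Combining these, $\bar\pi^{0}_{!}(\qelbar\bt L_w)$ admits no nontrivial sub or quotient supported on the boundary $\bar\Pi_{N,r}^{ww_0}\cap\Pi_{N,r}^{\theta}\setminus\Pi_{N,r}^{ww_0}$, hence equals the intermediate extension, namely $\mcI^{ww_0}\vert_{\Pi_{N,r}^{\theta}}$. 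Substituting back yields $\heckefunc{G}(L_w,\mcI^{w_0!})\iso j_{!}(\mcI^{ww_0}\vert_{\Pi_{N,r}^{\theta}})=\tilde{\mcI}^{ww_0}$.

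The case of $H$ proceeds by the same argument after noting that the $H$-action on $\Pi(F)=U\otimes L(F)$ acts on the $U$-factor, producing the anti-involution $w\mapsto \bar w$ exactly as in Proposition~\ref{Lw!w0G}. The main obstacle I anticipate lies in the identification step: verifying rigorously that $\bar\pi^{0}_{!}(\qelbar\bt L_w)$ is the intermediate extension (rather than merely agreeing with it on the open orbit). This requires a semismallness-type control over the boundary strata $\Pi_{N,r}^{u}\subset\bar\Pi_{N,r}^{ww_0}\cap\Pi_{N,r}^{\theta}$ with $u<ww_0$ in the same $\pi_{1}(G)$-coset, and ensuring that no extra perverse summands arise from the contributions of the Schubert strata $\flagvar{G}^{w'}\subset\overline{\flagvar{G}^{w}}$ with $w'<w$.
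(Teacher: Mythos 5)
Your proposal is correct and follows essentially the paper's own route: restrict to the open locus where $\overline{v}$ is an isomorphism (forced by $\mcI^{w_0!}$ being an extension by zero), use Nakayama to get $U_{v,r}=L'$ and hence reconstruct $gI_G$ from $v$, so that $\pi^{0}$ is injective and its image lies in $\Pi_{N,r}^{\theta}$. The obstacle you anticipate in the last paragraph is not actually there: since $\pi$ is proper and the locus where $\overline{v}$ is an isomorphism is exactly $\pi^{-1}(\Pi_{N,r}^{\theta})$, your injective map $\bar\pi^{0}$ is an isomorphism onto the closed subscheme $\overline{\Pi}_{N,r}^{ww_0}\cap\Pi_{N,r}^{\theta}$ of $\Pi_{N,r}^{\theta}$ (which is open in $\overline{\Pi}_{N,r}^{ww_0}$), so $\bar\pi^{0}_{!}(\qelbar\bt L_{w})$ is literally the transported IC-sheaf, i.e. $\mcI^{ww_0}\vert_{\Pi_{N,r}^{\theta}}$ extended by zero, and no semismallness or decomposition-theorem control over boundary strata or over the cells $\flagvar{G}^{w'}$ with $w'<w$ is needed.
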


\begin{proof}
We show the assertion for $\heckefunc{G}$, the case of  $\heckefunc{H}$ may be proved similarly. As in Proposition $\ref{Lw!Iw0}$, consider the open subscheme $\Pi_{\mathcal{I}^{w_{0},r}}^{0} \newtimes \flagvar{G}^{w}$ inside $\Pi_{\mathcal{I}^{w_{0}},r}\newtimes \overline{\flagvar{G}^{w}}$ given by the additional condition that the map 
$$\overline{v}:U^{*}/tU^{*}\to L^{'}/tL^{'}$$
is an isomorphism. Thus the restriction 
$$\pi^{0}:\Pi_{\mathcal{I}^{w_{0},r}}^{0} \newtimes \flagvar{G}^{w}\longrightarrow \Pi_{N,r}$$
of the map $\pi$ is locally a closed immersion. Therefore by definition 
$$\heckefunc{G}(L_{w},\mcI^{w_{0}!})\iso \pi^{0}_{!}(\qelbar\bt L_{w})\iso \tilde{\mcI}^{ww_{0}}.$$
\end{proof}
\par\medskip
The map  from $G(F)$ to $G(F)$  sending $g$ an element of $G(F)$ to $g^{-1}$ induces an equivalence of categories
$$\star^{\sharp}:P_{I_{G}}(\flagvar{G})\iso P_{I_{G}}(\flagvar{G}).$$
The similar equivalence of categories holds for $P_{\I{H}}(\flagvar{H}).$ Hence for $w$ in the affine extended Weyl group,  we have canonical isomorphisms 
$$\star^{\sharp}(L_{w})\iso L_{w^{-1}}, \hspace{5mm}\star^{\sharp}(L_{w!})\iso L_{w^{-1}!}, \hspace{5mm}\star^{\sharp}(L_{w*})\iso L_{w^{-1}*}.$$
At the level of Iwahori-Hecke algebras $\star^{\sharp}:\iwahorihecke{G}\to \iwahorihecke{G}$ is an anti-isomorphism of algebras. 
\begin{definition}
\label{rla}
Assume that $n\leq m.$ For any $\mathcal{T}$ in $P_{\I{H}}(\flagvar{H})$ and $\mathcal{K}$ in $P_{I_{H}\times \I{G}}(\Pi(F))$, we  define the right action functor $\heckefuncr{H}(\mathcal{T},\mathcal{K})$ of $P_{I_{H}}(\flagvar{H})$ on $P_{I_{H}\times I_{G}}(\Pi(F))$:  
$$\heckefuncr{H}(\mathcal{T},\mathcal{K})=\heckefunc{H}(\star^{\sharp}(\mathcal{T}),\mathcal{K}). $$
\end{definition}
The anti-automorphism defined over $\weyl{G}$ in Definition $\ref{antiauto}$ sending any $w$ to $\overline{w}$ may be extended to  an anti-automorphism of the group $G(F)$ itself. It suffices to take the morphism sending any $g$ an element of  $G(F)$ to its transpose ${}^t \! g$. Denote by $\sigma$ the anti-involution defined over $G(F)$ sending $g$ to $w_{0} {}^t \! g w_{0}.$ This anti-involution preserves the Iwahori subgroup $I_{G}$ and  induces an equivalence of categories (still denoted by $\sigma$):
$$\sigma:P_{I_{G}}(\flagvar{G})\iso P_{\I{G}}(\flagvar{G}).$$ 
Remind that $n=m$. We have the following result:

\begin{theorem}
\label{sig}
 There exists an equivalence of categories  
\begin{align}
\sigma : P_{I_{G}}(\flagvar{G})& \iso P_{I_{H}}(\flagvar{H})\nonumber \\
L_{w}&\longrightarrow  L_{w_{0}\overline{w} w_{0}},
\end{align}
Additionally it verifies the following properties:  for any $w$ and $w^{'}$ in $\weyl{G}$ we have 
$$\heckefunc{G}(L_{w},\mcI^{w_{0}!})\iso\heckefunc{H}(\sigma(L_{w}),\mcI^{w_{0}!})$$
and 
$$\sigma(L_{w}\star L_{w^{'}})=\sigma (L_{w^{'}})\star \sigma(L_{w}),$$
where $\star$ is the convolution product in $P_{I_{G}}(\flagvar{G})$. 

\end{theorem}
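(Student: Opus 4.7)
I plan to realise $\sigma$ as the equivalence induced by the anti-automorphism $\tilde\sigma\colon G(F)\to G(F)$, $g\mapsto w_0\,{}^tg\,w_0$, composed with the identification $P_{I_G}(\flagvar{G})=P_{I_H}(\flagvar{H})$ coming from the basis isomorphism $e_i\leftrightarrow u_i$ (available because $n=m$). First I would verify that $\tilde\sigma$ preserves $I_G$: the Iwahori $I_G$ is the preimage of the upper-triangular Borel under $G(\locring)\to G(\K)$; transposition swaps upper and lower triangular matrices, and conjugation by $w_0$ swaps them back, so $\tilde\sigma(I_G)=I_G$. Consequently $\tilde\sigma$ descends to a self-equivalence $\tilde\sigma^{\ast}$ of $P_{I_G}(\flagvar{G})$, and composition with the identification yields $\sigma$. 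The effect on Schubert cells follows from a direct computation in $\weyl{G}$: for $w=t^{\lambda}\tau$ one has ${}^tw=\tau^{-1}t^{\lambda}=t^{\tau^{-1}\lambda}\tau^{-1}=\overline w$, hence $\tilde\sigma(w)=w_0\overline w\,w_0$. Since $\tilde\sigma$ is an automorphism of ind-varieties preserving the Bruhat stratification, its pullback sends the IC-sheaf of $\flagvar{G}^w$ to the IC-sheaf of $\flagvar{G}^{\tilde\sigma(w)}$, giving $\sigma(L_w)=L_{w_0\overline w\,w_0}$.

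For the anti-convolution identity, since $\tilde\sigma$ is an anti-automorphism it satisfies $\tilde\sigma\circ m=m\circ\mathrm{swap}\circ(\tilde\sigma\times\tilde\sigma)$, where $m$ is the multiplication map appearing in the convolution diagram $G(F)\times^{I_G}G(F)\to G(F)$. Pulling this diagram back under $\tilde\sigma$ and using this intertwining relation yields a canonical isomorphism $\tilde\sigma^{\ast}(L_w\star L_{w'})\iso\tilde\sigma^{\ast}(L_{w'})\star\tilde\sigma^{\ast}(L_w)$; this is the geometric shadow of the fact that an anti-automorphism of a locally profinite group preserving a compact-open subgroup induces an anti-homomorphism of the corresponding Hecke algebra, which was already noted before the theorem in the case of $\star^{\sharp}$.

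Finally I would establish the Hecke compatibility via Proposition \ref{tildew}. On one side, $\heckefunc{G}(L_w,\mcI^{w_0!})\iso\tilde\mcI^{ww_0}$. On the other, applying Proposition \ref{tildew} in $H$ to $\sigma(L_w)=L_{w_0\overline w\,w_0}$ gives $\heckefunc{H}(\sigma(L_w),\mcI^{w_0!})\iso\tilde\mcI^{\overline{w_0\overline w\,w_0\cdot w_0}}$. Using that $v\mapsto\overline v$ is an anti-automorphism of $\weyl{G}$ and $\overline{w_0}=w_0$, one computes $\overline{w_0\overline w\,w_0\cdot w_0}=\overline{w_0\overline w}=\overline{\overline w}\cdot\overline{w_0}=w\cdot w_0=ww_0$, so both sides are canonically isomorphic to $\tilde\mcI^{ww_0}$, and composing the two isomorphisms of Proposition \ref{tildew} produces the required identification. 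The main obstacle I anticipate is the anti-convolution step: upgrading the combinatorial match to a genuine isomorphism of perverse sheaves requires careful bookkeeping with the twisted product and the multiplication map under $\tilde\sigma$. The remaining verifications reduce to routine manipulation in $\weyl{G}$ combined with Proposition \ref{tildew}.
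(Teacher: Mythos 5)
Your proposal is correct and follows essentially the paper's own route: the paper likewise defines $\sigma$ through the anti-involution $g\mapsto w_{0}\,{}^t\!g\,w_{0}$ (with $n=m$ used to identify $G$ and $H$), reads off $L_{w}\mapsto L_{w_{0}\overline{w}w_{0}}$ and the reversal of convolution from this anti-involution, and obtains the Hecke compatibility exactly as you do, from Proposition \ref{tildew} together with the identity $\overline{w_{0}\overline{w}w_{0}\cdot w_{0}}=ww_{0}$ in $\weyl{G}$. The only slight imprecision is describing the anti-involution as an automorphism of the ind-variety $\flagvar{G}$: it really acts on the double quotient $I_{G}\backslash G(F)/I_{G}$ (equivalently one composes with $g\mapsto g^{-1}$, as with $\star^{\sharp}$), which is also the level of detail at which the paper itself asserts the induced equivalence.
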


\begin{proof}

The assertion follows from Propositions $\ref{Lw!Iw0}$  and  $\ref{Lw!w0G}$ and  $\ref{tildew}.$ 
\end{proof}
The two anti-isomorphisms $\sigma$ and $\star^{\sharp}$ defined above commute and their composition is an algebra isomorphism.   We will denote this composition by $\tilde{\sigma},$ i.e. for any $g$ in $G(F)$,  $\tilde{\sigma}(g)=w_{0}{}^t \! g^{-1}w_{0}.$

%
%
%

\subsection{Sub-modules \texorpdfstring{$\Theta$}{Theta} and \texorpdfstring{$\mathcal{S}_{0}$}{\mathcal{S}0}}$ $
\label{theta}

We assume in this subsection that $n\leq m$ and we consider the module $\Theta$  generated by the elements $\mathcal{I}^{\mu!},$ where $I_{\mu}$ runs through all possible subsets of $n$ elements in $\{1,\dots ,m\}$. This is a submodule in the Grothendieck group $K(D_{I_H\times I_G}(\Pi(F)))$ of $D_{I_H\times I_G}(\Pi(F)))$ acted on by  $K(P_{I_G}(\Fl_G))$.  We will show that the submodule $\Theta$ is free of rank $C_{m}^{n}$ over $K(P_{I_G}(\Fl_G))$ with an explicit basis formed by $\mathcal{I}^{\mu!}$. The submodule $\Theta$ is a key object in the proof of the classical Howe correspondence \cite{Minguez1} and \cite{Kudla}. It is indeed the first term of the Kudla's filtration defined over the Weil representation in \cite{MVW}. The considerations in this subsection are essentially on the level of Grothendieck groups, we formulate them on the level of derived categories however when this is possible.
\par\medskip 
Let $\cS_{0}$ be the $\Qlb$-subspace  of  $K(D_{I_H\times I_G}(\Pi(F))\otimes\Qlb$ generated by elements of the form $\cI^{(w\centerdot w_0)!}$, where  $w$ runs through $\widetilde{W}_G$ the affine extended Weyl group of $G$ and the action $w\centerdot w_0$ is defined below. The space  $\cS_0$  is a free module  of rank one over $\cH_{I_G}$.   We consider the standard Levi subgroup $M$ of $H$ corresponding to the partition $(n,m-n)$ of $m$ and we recall briefly the construction of the subalgebra $\iwahorihecke{M}$  of the Iwahori-Hecke algebra $\iwahorihecke{H}$ and some properties according to \cite{Prasad}. Then we endow $\mathcal{S}_{0}$ with a right action of $\iwahorihecke{M}$ and by parabolic induction we construct an induced module.  We show that  the adjunction map $\alpha:\mathcal{S}_{0}\otimes_{\iwahorihecke{M}} \iwahorihecke{H} \to \mathcal{S}^{I_{H}\times I_{G}}(\Pi(F))$ is injective and its image equals $\Theta\otimes \qelbar.$ This gives the first therm of Kudla's filtration as an induced module. In the rest of this subsection we show that the action of the Iwahori-Hecke algebra of the factor  $\GL_{n}$ of $M$ identifies  with the action of $\iwahorihecke{G}$ via the anti-involution $\tilde{\sigma}$ defined in Theorem $\ref{sig}$. The action of the Iwahori-Hecke algebra of $\GL_{m-n}$ factor of $M$ is  by shifting by $[-\ell(w)]$, where $\ell$ denotes the length function on $\weyl{G}$.  
\par\medskip

We let the affine extended Weyl group $\weyl{G}$ of $G$ act on the set $X_{G}\times S_{n,m}$ in the following way: 
\begin{definition}
\label{actionW}
Let $w=t^{\lambda_{1}}\tau_{1}$ be an element of $\weyl{G}$ and $(\lambda,s)$ in $X_{G}\times S_{n,m}$. We define a left action: 
$$w\centerdot(\lambda,s)=(\lambda_{1}+\tau_{1}(\lambda),\tau_{1}s),$$
where $\tau_{1}s$ is the composition $I_{s}\overset{s}\longrightarrow\{1,\dots,n\}\overset{\tau_{1}}\longrightarrow\{1,\dots,n\}.$
\end{definition}
We will consider the affine extended Weyl group $\weyl{G}$ as a subset of $X_{G}\times S_{n,m}.$ More precisely to a given $w=t^{\lambda}\tau$  we associate the element $(\lambda,\tau)$ in $X_{G}\times S_{n,m}$ with $I_{\tau}=\{1,\dots,n\}.$ Let $I_{w_{0}}=\{1,\dots,n\}$  be a subset of $\{1,\dots ,m\}$ and $w_{0}:I_{w_{0}}\to \{1,\dots,n\}$ be the longest element of  the Weyl group $W_{G}$. By the above convention the element $w_{0}$ becomes the element $(0,w_{0})$ in $X_{G}\times S_{n,m}.$
\par\medskip
For any   strictly decreasing map $\nu$ from $\{1,\dots,n\}$ to $\{1,\dots,m\}$, denote by $I_{\mu}$ the image  of $\nu$ and denote by  $\mu:I_{\mu}\to \{1,\dots,n\}$ the inverse of $\nu.$ Thus $\mu$ can be viewed as an element of $X_{G}\times S_{n,m}$ by assuming that the corresponding term on $X_{G}$ vanishes.
Let  $\overline{\Pi}_{0,1}^{\mu}$ be the closure of $I_{H}\times I_{G}$-orbit $\Pi_{0,1}^{\mu}$ in $\Pi_{N,r}$, $\overline{\Pi}_{0,1}^{\mu}$ is an affine space. Denote by $\mcI^{\mu}$ the IC-sheaf of $\Pi_{0,1}^{\mu},$  it is the constant perverse sheaf on its support. 
\par\medskip
Denote  by $U_{1}\subset U_{2}\subset \dots \subset U_{m}=U_{0}$ the standard flag on $U/tU.$
We  consider  $\Pi_{0,1}$  the space of maps $v:L^*\to U/tU$ such that the domain and the range are both equipped with a flag preserved by $v$. Thus $\overline{\Pi}_{0,1}^{\mu}$ is the space of maps $v:L^{*}\to U/tU$ such that $v(e^{*}_{i})$ lies in $U_{\nu(i)}$ for all $i=1,\dots,n.$ 
In other terms the map $v$ sends $\Vect(e_{n}^{*},\dots,e_{n-i}^{*})$ to $U_{\nu(n-i)}$ for all $i=0,\dots,n-1.$ An element $v$ lies in $\Pi_{0,1}^{\mu}$ if  additionally the map
 sending $\Vect (e_{n}^{*},\dots,e_{n-i}^{*})$ to $U_{\nu(n-i)}/U_{\nu(n-i)-1}$ is non-zero for all $i=1,\dots,n.$  We  may also consider the element $v$ in $\Pi_{0,1}$ as a map from $U^*$ to $L/tL,$ so $v$ lies in $\overline{\Pi}_{0,1}^{\mu}$ if and only if $v$ sends $\Vect (u^{*}_{m},\dots,u^{*}_{1+\nu(j)})$ to $L_{j-1}$ for all $j=1,\dots,n.$ Moreover, the map  $v$ lies in  $\Pi_{0,1}^{\mu}$ if in addition  $v(u^{*}_{\nu(j)})\notin L_{j-1}$  for all $j=1,\dots,n.$ Let $w=t^{\lambda}\tau$ be  an element of $\weyl{G}.$ Choose  two integers $N$ and $r$ with $N+r> 0$ such that for any $\nu$ in $W_{G}.\lambda$ the following condition is satisfied (condition $\eqref{condition2}$):
 $$\langle \nu,\check{\omega}_{n} \rangle<r\hspace{5mm}\mathrm{and} \hspace{5mm}\langle-\nu,\check{\omega}_{n}\rangle\leq N.$$ For a point $gI_{G}$ in $\overline{\flagvar{G}^{w}}$, we set $L^{'}=gL$ and equip $L^{'}/tL^{'}$ with the complete flag $L^{'}_{i}=gL_{i}$ for $i=1,\dots ,n$. Here $(L_1\subset\ldots\subset L_n=L/tL)$ is the complete flag on $L/tL$ preserved by $B_G$. Let $\overline{\Pi}_{r}^{\mu}\newtimes \overline{\flagvar{G}^{w}}$ be the scheme classifying pairs $(v,gI_{G}),$ where $gI_{G}$ is in $\overline{\flagvar{G}^{w}}$, and $v:U^{*}\to L^{'}/t^{r}L$ such that the induced map 
$$
\overline{v}:U^{*}/tU^{*}\longrightarrow L^{'}/tL^{'}
$$
sends $\Vect (u_{m}^{*},\dots ,u_{\nu(j)+1}^{*})$ to $L_{j-1}^{'}$ for all $j=1,\dots, n.$ We have a proper map 
$$
\pi:\overline{\Pi}_{r}^{\mu}\newtimes \overline{\flagvar{G}^{w}}\longrightarrow \Pi_{N,r}
$$
sending any element  $(v,gI_{G})$ to $v$. By definition of $\heckefunc{G}$,
$$
\heckefunc{G}(L_{w},\mathcal{I}^{\mu})\iso \pi_{!}(\qelbar\bt L_{w}),
$$
where $\qelbar\bt L_{w}$ is normalized to be perverse. 
 \begin{proposition}
\label{grothen4}
Let $w$ be an element of $\weyl{G}$. Then  $\heckefunc{G}(L_{w!},\mathcal{I}^{\mu!})$ is canonically isomorphic to $\mcI^{w\centerdot\mu !}.$

\end{proposition}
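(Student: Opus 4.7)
The plan is to mimic the strategy of Proposition \ref{Lw!Iw0}, which handled the special case $\mu = w_0$, by replacing $\Pi_{\mathcal{I}^{w_0}, r}^{0} \newtimes \flagvar{G}^w$ with an open subscheme adapted to the orbit $\Pi_{0,1}^{\mu}$. The general shape of the argument is to exhibit, for a suitable open subscheme of the domain of $\pi$, that the map $\pi$ restricts to a locally closed immersion onto $\Pi_{N,r}^{w\centerdot\mu}$, so that the pushforward of a constant perverse sheaf gives $\mcI^{w\centerdot\mu!}$ by the very definition of the latter.

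First, I would single out the open subscheme $\Pi_{r}^{\mu, 0} \newtimes \flagvar{G}^w$ inside $\overline{\Pi}_{r}^{\mu} \newtimes \overline{\flagvar{G}^w}$ consisting of pairs $(v, gI_G)$ such that $gI_G \in \flagvar{G}^w$ and, in addition to the closure conditions $v\bigl(\Vect(u^*_m, \ldots, u^*_{\nu(j)+1})\bigr) \subset L'_{j-1}$, the non-vanishing conditions $v(u^*_{\nu(j)}) \notin L'_{j-1}$ hold for $j = 1, \ldots, n$. By the description of $\Pi_{0,1}^\mu$ recalled before the statement, these are precisely the orbit-opening conditions, so that (up to the perversity shift) the sheaf $\qelbar \bt L_{w!}$ is the extension by zero of the constant sheaf from this open subscheme. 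Denoting by $\pi^{0}$ the restriction of $\pi$, one gets by definition $\heckefunc{G}(L_{w!}, \mcI^{\mu!}) \iso \pi^{0}_{!}(\qelbar \bt L_{w!})$.

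Second, I would show that $\pi^{0}$ is a locally closed immersion whose image is exactly $\Pi_{N,r}^{w\centerdot\mu}$. For the image: writing $w = t^{\lambda_{1}}\tau_{1}$ one has $w\centerdot\mu = (\lambda_{1}, \tau_{1}\mu)$ by Definition \ref{actionW}; taking $g = t^{\lambda_{1}}\tau_{1}$ as a base point of $\flagvar{G}^w$, the point $\pi^{0}(v, gI_G)$ coincides with the base point of $\Pi_{N,r}^{w\centerdot\mu}$ described by $\eqref{def_of_IH_IG_orbit}$, and $I_{H} \times I_{G}$-equivariance of $\pi^{0}$ promotes this to an equality of orbits. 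For injectivity: given $v \in \Pi_{N,r}^{w\centerdot\mu}$, the non-vanishing conditions $v(u^*_{\nu(j)}) \notin L'_{j-1}$, combined with the constraint $gI_G \in \flagvar{G}^w$ (which fixes the relative position of $L'$ with respect to $L$), reconstruct the flag $L'_{\bullet}$ step by step from the images $v(u^*_{\nu(j)})$ modulo the previously built lattices, so $gI_G$ is uniquely determined by $v$.

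The main obstacle will be the injectivity verification: showing that on the open stratum the flag $L'_{\bullet}$ is genuinely reconstructible from $v$. Concretely, one must check that in the filtration of $L'/t^{r}L$ induced by the $v(u^*_{\nu(j)})$, the openness conditions force the successive one-dimensional quotients to be non-zero at the expected positions, so that no ambiguity in the choice of $L'_{j-1}$ remains. Once $\pi^{0}$ is identified with the locally closed immersion $\Pi_{N,r}^{w\centerdot\mu} \hookrightarrow \Pi_{N,r}$, the conclusion is immediate, since $\pi^{0}_{!}$ of the shifted constant sheaf is by definition $\mcI^{w\centerdot\mu!}$, giving the claimed canonical isomorphism.
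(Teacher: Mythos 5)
Your proposal is correct and follows essentially the same route as the paper: restrict $\pi$ to the open subscheme cut out by $gI_G\in\flagvar{G}^{w}$ together with the orbit-opening conditions (your non-vanishing conditions $v(u^*_{\nu(j)})\notin L'_{j-1}$ are, given the closure conditions, equivalent to the paper's surjectivity of $\overline{v}:\Vect(u_m^*,\dots,u_{\nu(j)}^*)\to L'_j$), identify the image of $\pi^{0}$ with $\Pi_{N,r}^{w\centerdot\mu}$ via the explicit base point, and observe that $\pi^{0}$ is an isomorphism onto its image. The injectivity step you flag as the main obstacle is exactly the point the paper asserts without further detail, and your reconstruction-of-the-flag argument is the intended justification.
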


\begin{proof}
Let $w=t^{\lambda}\tau$  with $\lambda=(a_{1},\dots,a_{n})$ in $X_{G}$ and $\tau$ in $W_{G}.$
Let $\Pi_{r}^{\mu}\newtimes \flagvar{G}^{w}$ be  the open subscheme  of  $\overline{\Pi}_{r}^{\mu}\newtimes \overline{\flagvar{G}^{w}}$ given by the additional conditions that  $gI_{G}$ lies in $\flagvar{G}^{w}$ and that the map $\overline{v}:\Vect (u_{m}^{*},\dots ,u_{\nu(j)}^{*})\to L_{j}^{'}$
is surjective for $j=1,\dots,n.$ Denote by $\pi^{0}$ the restriction of $\pi$ to this open subscheme. The image of $\pi^{0}$ consist of  the $I_{H}\times I_{G}$-orbit on $\Pi_{N,r}$ through $v$ such that $v(u^{*}_{\nu(j}))=t^{a_{\tau(j)}}e_{\tau(j)}$ for all $j=1,\dots,n$ and $v(u^{*}_{k})=0$ for $k\in I_{\mu}.$
Therefore the image of the map $\pi^{0}$  is $\Pi_{N,r}^{w\centerdot\mu}$ and $\pi^{0}$  is an isomorphism onto its image.  Thus 
$$\heckefunc{G}(L_{w!},\mathcal{I}^{\mu!})\iso \mathcal{I}^{(w\centerdot \mu)!}.$$ 
\end{proof}

\begin{definition}
Let $\Theta$ be  the $K(P_{I_G}(\Fl_G))$-module in $K(D_{I_H\times I_G}(\Pi(F)))$ generated by the elements $\mathcal{I}^{\mu!},$ where $I_{\mu}$ runs through all possible subsets of $n$ elements in $\{1,\dots ,m\}$. 
\end{definition}
It is understood that for each such subset $I_{\mu}$ there is a unique strictly decreasing map $\mu: I_{\mu}\to \{1,\dots, n\}$, so we may view $\mu$ as the element $(0,\mu)$ in $X_G\times S_{n,m}$ as above. 
\par\medskip
The subspace $\Theta\otimes\Qlb\subset K(D_{I_H\times I_G}(\Pi(F)))\otimes\Qlb$ is different from the group $K(D_{I_H\times I_G}(\Pi(F)))\otimes\Qlb$. For example, each function from $\Theta\otimes\Qlb$ vanishes at $0$ in $\Pi(F)$. This submodule $\Theta$ is the geometrization of the  first term of the  Kudla's filtration on $K(D_{I_H\times I_G}(\Pi(F)))\otimes\Qlb$.
\par\medskip
Our calculation yields the following generalization: 
\begin{proposition}
\label{Cnm}
The module $\Theta$ is free  module of rank $C_{m}^{n}$ over $K(P_{I_G}(\Fl_G))$. The elements $\mathcal{I}^{\mu!},$ where  $I_{\mu}$ runs through all possible subsets of $n$ elements in $\{1,\dots ,m\},$ form a basis of this module over $K(P_{I_G}(\Fl_G))$. 
\end{proposition}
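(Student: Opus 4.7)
The plan is to exhibit an explicit $K(P_{I_G}(\Fl_G))$-linear isomorphism
\[
\Phi:\bigoplus_{\mu}\,K(P_{I_G}(\Fl_G))\longrightarrow \Theta,\qquad L_{w!}\otimes e_{\mu}\longmapsto L_{w!}\cdot \mathcal{I}^{\mu!},
\]
where $\mu$ runs through the $C_{m}^{n}$ strictly decreasing maps, and to identify the image with the span of a subset of a known $\mathbb{Z}$-basis of $K(D_{I_H\times I_G}(\Pi(F)))$.

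First, by Proposition~\ref{grothen4} one has $L_{w!}\cdot \mathcal{I}^{\mu!}=\mathcal{I}^{(w\centerdot\mu)!}$ in the Grothendieck group, so $\Phi$ makes sense and its image is the $\mathbb{Z}$-span of $\{\mathcal{I}^{(w\centerdot\mu)!}\}_{w\in\weyl{G},\,\mu}$. The next step is to check that the map
\[
\weyl{G}\times\{\mu\}\longrightarrow X_{G}\times S_{n,m},\qquad (w,\mu)\longmapsto w\centerdot\mu
\]
is a bijection. Writing $w=t^{\lambda}\tau$ with $\lambda\in X_{G}$, $\tau\in W_{G}$, Definition~\ref{actionW} gives $w\centerdot\mu=(\lambda,\tau\mu)$, so the underlying subset $I_{\mu}$ is preserved by the action. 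Conversely, for any $(\nu,(s,I_{s}))\in X_{G}\times S_{n,m}$, set $I_{\mu}:=I_{s}$; then $\mu$ is forced to be the unique strictly decreasing bijection $I_{\mu}\to\{1,\ldots,n\}$, $\lambda$ is forced to be $\nu$, and $\tau\in W_{G}=S_{n}$ is uniquely determined by $\tau=s\circ\mu^{-1}$. Hence the correspondence is a bijection.

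Second, I would recall that the classes $\{L_{w!}\}_{w\in\weyl{G}}$ form a $\mathbb{Z}$-basis of $K(P_{I_{G}}(\Fl_{G}))$ since they are unitriangular with respect to the Bruhat order to the IC-basis $\{L_{w}\}$, and similarly the classes $\{\mathcal{I}^{v!}\}_{v\in X_{G}\times S_{n,m}}$ are unitriangular with respect to the closure order to the IC-basis $\{\mathcal{I}^{v}\}$ of $K(P_{I_H\times I_G}(\Pi(F)))=K(D_{I_H\times I_G}(\Pi(F)))$, and thus form a $\mathbb{Z}$-basis as well. Putting everything together, $\Phi$ sends the basis $\{L_{w!}\otimes e_{\mu}\}_{(w,\mu)}$ of the source bijectively onto the subset $\{\mathcal{I}^{v!}\}_{v\in X_{G}\times S_{n,m}}$ of a basis of the target, so $\Phi$ is injective with image the free submodule on the $\mathcal{I}^{v!}$. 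By construction of $\Theta$ this image coincides with $\Theta$, proving both freeness of rank $C_{m}^{n}$ and that the $\mathcal{I}^{\mu!}$ form a basis.

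The only delicate point is the claim that standard objects form a $\mathbb{Z}$-basis of $K(D_{I_H\times I_G}(\Pi(F)))$; since this derived category is defined by a limit procedure as in \cite{BFH2}, one must argue that inside each level $\Pi_{N,r}$ the closure order on the relevant strata is well-founded so that the matrix expressing $\mathcal{I}^{v!}$ in terms of $\mathcal{I}^{v}$ is genuinely unitriangular, and that this passes to the limit. Once this mild bookkeeping is handled, the rest of the argument is purely combinatorial.
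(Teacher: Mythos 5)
Your argument is correct and is essentially the paper's own: the proposition is stated there as an immediate consequence of Proposition~\ref{grothen4}, i.e.\ exactly your observation that $L_{w!}\cdot\mathcal{I}^{\mu!}=\mathcal{I}^{(w\centerdot\mu)!}$ together with the bijectivity of $(w,\mu)\mapsto w\centerdot\mu$ onto $X_G\times S_{n,m}$ and the fact that the standard classes are independent in the Grothendieck group. Note that your flagged ``delicate point'' is not really needed: you never have to invert the triangular matrix (i.e.\ you do not need the $\mathcal{I}^{v!}$ to span all of $K(D_{I_H\times I_G}(\Pi(F)))$), only their linear independence, which follows from the finite unitriangular expansion of each $\mathcal{I}^{v!}$ in the simple classes $\mathcal{I}^{u}$ at any sufficiently large finite level $\Pi_{N,r}$.
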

 
Our purpose now is to show that $\Theta$ is a submodule with respect to the right action of  $K(P_{I_H}(\flagvar{H}))\otimes \qelbar$ on $K(D_{I_H\times I_G}(\Pi(F)))$ and identify $\Theta$ as the induced representation from a parabolic subalgebra. The considerations are essentially on the level of Grothendieck groups. Let us simply denote  $K(D_{I_H\times I_G}(\Pi(F))\otimes\Qlb$ by $\mathcal{S}.$  Remind that $\cS_{0}$ is the $\Qlb$-subspace of $\mathcal{S}$ generated by the elements $\cI^{(w\centerdot w_0)!}$, where  $w$ runs through  $\widetilde{W}_G$. 
\par\medskip
Denote by $M$ a standard Levi subgroup of $H,$ and by $W_{M}$ the corresponding finite Weyl group.  Let $I_{M}=M(F)\cap I_{H}.$
Denote by $T_{w}$ the characteristic function of the double coset $IwI$ for any $w$ in $\weyl{H}$. The algebra $\iwahorihecke{M}$ is the subalgebra of $\iwahorihecke{H}$ generated by $(T_{w})_{w\in W_{M}},$ and by the Bernstein functions $(\theta_{\lambda})_{\lambda\in{X_{H}}}.$ Remind that our convention for the Wakimoto objects is the one in \cite{Prasad}. According to \cite[\S\ 5.4]{Prasad}, each coset $W_{M}\backslash W_{H}$ has a unique element of minimal length. Let ${}^M \! W_{H}$
be the set of such elements. If $\Delta_{M}$ denotes the simple roots of $M$ then 
$${}^M \! W_{H}=\{w\in W_{H}\vert w(\check{\alpha})>0 \hspace{2mm}\mathrm{for}\hspace{2mm} \mathrm{each}\hspace{2mm} \check{\alpha}\hspace{1mm}in\hspace{1mm}\Delta_{M}\}.$$
Any $w$ in $W_{H}$ can be written as $w^{''}w^{'},$  where $w^{''}$ and $w^{'}$  are respective elements of $W_{M}$ and ${}^M \! W_{H}$ satisfying $\ell(w)=\ell(w^{''})+\ell(w^{'}).$ Therefore $T_{w}$ equals $T_{w^{''}}T_{w^{'}}.$
 We recall that $\iwahorihecke{H}$ is a free module  over $\iwahorihecke{M}$ generated by $\{T_{w^{'}}\vert w^{'}\hspace{1mm}in \hspace{1mm}{}^M\! W_{H}\}.$ We are going to prove the two following results:
\begin{theorem}
\label{S0}
The space $\mathcal{S}_{0}$ is a submodule of $\mathcal{S}$ for the right action of $\iwahorihecke{M}.$  
\end{theorem}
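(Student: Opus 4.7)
The plan is to verify stability of $\mathcal{S}_0$ on algebra generators of $\iwahorihecke{M}$. Under the decomposition $M = M_1 \times M_2$ with $M_1 = \GL_n$ acting on $\Vect(u_1,\ldots,u_n)$ and $M_2 = \GL_{m-n}$ acting on $\Vect(u_{n+1},\ldots,u_m)$, the Bernstein presentation recalled before the statement exhibits $\iwahorihecke{M}$ as generated by the $T_s$ for simple reflections in $W_{M_1}\times W_{M_2}$ together with the Wakimoto elements $\theta_\lambda$ for $\lambda\in X_H = X_{M_1}\oplus X_{M_2}$. It therefore suffices to show that, for every $w\in\weyl{G}$ and each such generator $\mathcal{T}$, the object $\heckefuncr{H}(\mathcal{T},\mathcal{I}^{(w\centerdot w_0)!})$ lies in the $\Qlb$-span of the $\mathcal{I}^{(w'\centerdot w_0)!}$ with $w'\in\weyl{G}$.

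The geometric key, common to all cases, is the observation that every point $v$ of the orbit $\Pi_{N,r}^{(w\centerdot w_0)}$ satisfies $v(u_i^*)=0$ for $i>n$: this follows from the orbit description $\eqref{def_of_IH_IG_orbit}$ combined with the fact that $I_{\tau w_0}=\{1,\ldots,n\}$ for the corresponding element of $S_{n,m}$. For the $M_2$-factor, whose group action on $U$ fixes $u_1^*,\ldots,u_n^*$ and preserves $\Vect(u_{n+1},\ldots,u_m)$, this forces the pointwise right $M_2(F)$-action on $\Pi(F)$ to preserve the orbit; unwinding Definition~\ref{rla} and the construction of $\heckefunc{H}$ in Appendix~\ref{Appendix}, the relevant convolution map becomes an $\flagvar{M_2}$-fibration over $\Pi_{N,r}^{(w\centerdot w_0)}$, so the pushforward of $\qelbar\bt L_{w'!}$ for $w'\in\weyl{M_2}$ is $\mathcal{I}^{(w\centerdot w_0)!}$ up to the cohomological shift $[-\ell(w')]$ anticipated in the introduction of the subsection. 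For the $M_1$-factor one imitates Proposition~\ref{grothen4}: the corresponding convolution $\pi^0$ restricts to an isomorphism onto a single orbit $\Pi_{N,r}^{(w''\centerdot w_0)}$, where $w''\in\weyl{G}$ is obtained from $w$ and $w'$ via the anti-involution $\tilde\sigma$ of Theorem~\ref{sig}. This both confirms stability of $\mathcal{S}_0$ and establishes, once the calculation is carried out, the explicit identification of the right $\iwahorihecke{M_1}$-action with the left $\iwahorihecke{G}$-action announced immediately after this theorem.

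The main obstacle is handling the Wakimoto generators $\theta_\lambda$, which are not supported on a single Schubert cell. To reduce to the two cases above, I would split $\lambda=\lambda_1+\lambda_2$ along $X_H=X_{M_1}\oplus X_{M_2}$, use the factorization $\theta_\lambda=\theta_{\lambda_1}\theta_{\lambda_2}$ together with the associativity of $\heckefuncr{H}$, and then decompose each $\theta_{\lambda_i}$ as a convolution product of standard and costandard sheaves indexed by a dominant and an anti-dominant cocharacter of $T_{M_i}$, to which the previous two computations apply. The care required is to track signs and cohomological shifts so that iterated applications remain in the $\Qlb$-span $\mathcal{S}_0$ at the level of Grothendieck groups, which is where the identity is being claimed; in particular one must check that length-additivity in the Wakimoto products is compatible with length-additivity of the resulting $w''$ elements predicted on the $\weyl{G}$-side via $\tilde\sigma$.
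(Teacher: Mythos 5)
Your overall strategy coincides with the paper's: check stability of $\mathcal{S}_{0}$ on generators of $\iwahorihecke{M}$ (finite reflections of $W_{M}$ and Bernstein/Wakimoto elements), observe that points of the orbits $\Pi_{N,r}^{(w\centerdot w_{0})}$ kill $\Vect(u_{n+1}^{*},\dots,u_{m}^{*})$ so that the $M_{2}$-factor acts by cohomological shifts, and identify the $M_{1}$-action with a $\heckefunc{G}$-action in the spirit of Proposition \ref{grothen4} and Proposition \ref{newsig}. These parts match the paper's Lemmas \ref{tauw0}, \ref{lemme}, \ref{LwIw0H}, \ref{M1action} and Proposition \ref{shift}.

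However, there is a genuine gap precisely at the point you flag as the ``main obstacle''. After writing $\Theta_{\lambda}=\Theta_{\lambda_{1}}\star\Theta_{-\lambda_{2}}$ with $\lambda_{1},\lambda_{2}$ dominant, you claim that ``the previous two computations apply'' to both factors. They do not: your two computations are direct pushforwards of extensions by zero of (shifted) constant sheaves on single cells, so they only treat the standard objects $L_{t^{\lambda_{1}}!}$ (and, with affine-space fibres, the anti-dominant standard objects as in Lemma \ref{M1action}). The costandard factor $L_{t^{-\lambda_{2}}*}$ is a full direct image, not supported as a constant sheaf on a cell, and its class in the Hecke algebra is $(-1)^{\ell(w)}q_{w}^{1/2}T_{w^{-1}}^{-1}$, not a multiple of $T_{w}$; no orbit-by-orbit fibration argument of the kind you describe computes $\heckefuncr{H}(L_{t^{-\lambda_{2}}*},\mathcal{I}^{(w\centerdot w_{0})!})$. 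This is exactly where the paper introduces a different mechanism (Propositions \ref{13} and \ref{15}): using the invertibility $L_{w!}\star L_{w^{-1}*}\iso L_{e}$ of \eqref{e}, the commutation of the $\heckefunc{G}$- and $\heckefunc{H}$-actions, and the identification $\heckefunc{G}(L_{w!},\mathcal{I}^{w_{0}!})\iso\mathcal{I}^{(w\centerdot w_{0})!}$ of Lemma \ref{LwIw0G}, one transfers the costandard $H$-side operators to $G$-side operators $\heckefunc{G}(L_{t^{\pm w_{0}(\lambda)}*},\cdot)$, which preserve $\mathcal{S}_{0}$ because $\mathcal{S}_{0}$ is by construction a left $\iwahorihecke{G}$-submodule. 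Your proposal contains no substitute for this step; to repair it you would have to argue, at the level of the Grothendieck group, that the standard translation operators act invertibly on the basis $\{\mathcal{I}^{(w\centerdot w_{0})!}\}$ of $\mathcal{S}_{0}$ so that their inverses (the costandard operators) also preserve $\mathcal{S}_{0}$ --- which is the paper's argument in disguise, and it must be supplied, not assumed. A smaller inaccuracy: the equivalence you invoke should be the one of Proposition \ref{newsig} (valid for $n\leq m$ via $M_{1}\iso G$), not Theorem \ref{sig}, which is stated only for $n=m$.
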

The inclusion of $\mathcal{S}_{0}$ in $\mathcal{S}$ is a homomorphism of right $\mathcal{H}_{I_{M}}$-modules and left $\mathcal{H}_{I_{G}}$-modules. By adjonction, we get a morphism 
$$\alpha:\mathcal{S}_{0}\otimes_{\iwahorihecke{M}} \iwahorihecke{H} \to \mathcal{S}$$
of right $\iwahorihecke{H}$-modules and left $\iwahorihecke{G}$-modules. 
\begin{theorem}
\label{avali}
The map $\alpha:\mathcal{S}_{0}\otimes_{\iwahorihecke{M}} \iwahorihecke{H} \to \mathcal{S}$ is injective, and its image equals $\Theta\otimes \qelbar.$
\end{theorem}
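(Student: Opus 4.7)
The strategy is to show that both $\mathcal{S}_0 \otimes_{\iwahorihecke{M}} \iwahorihecke{H}$ and $\Theta \otimes \Qlb$ are free left $\iwahorihecke{G}$-modules of rank $\binom{m}{n}$, and that $\alpha$ carries a distinguished basis of the former bijectively to the basis $\{\cI^{\mu!}\}_\mu$ of the latter exhibited in Proposition~\ref{Cnm}. The map $\alpha$ is well-defined by Theorem~\ref{S0}, so both the injectivity and the determination of its image will follow at once from such a bijection of bases.

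For the rank count, I would first note that $W_M\backslash W_H = (S_n\times S_{m-n})\backslash S_m$ is in bijection with the collection of size-$n$ subsets $I_\mu \subset \{1,\dots,m\}$; the minimal coset representative $w'_\mu \in {}^M\!W_H$ is the unique shuffle permutation placing $\{1,\dots,n\}$ at the positions of $I_\mu$ in increasing order. Combining the decomposition $\iwahorihecke{H} = \bigoplus_{w'\in{}^M\!W_H} \iwahorihecke{M}\, T_{w'}$ with the fact that $\mathcal{S}_0$ is free of rank one over $\iwahorihecke{G}$ yields
\begin{equation*}
\mathcal{S}_0 \otimes_{\iwahorihecke{M}} \iwahorihecke{H} \;=\; \bigoplus_{\mu}\, \mathcal{S}_0 \otimes T_{w'_\mu},
\end{equation*}
a free left $\iwahorihecke{G}$-module of rank $\binom{m}{n}$ with basis $\{\cI^{w_0!}\otimes T_{w'_\mu}\}_\mu$. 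The target $\Theta\otimes\Qlb$ has matching rank by Proposition~\ref{Cnm}.

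The crux of the proof is the identification
\begin{equation*}
\alpha\bigl(\cI^{w_0!}\otimes T_{w'_\mu}\bigr) \;=\; \heckefuncr{H}\bigl(L_{w'_\mu !},\,\cI^{w_0!}\bigr) \;\iso\; \cI^{\mu!}
\end{equation*}
for each subset $I_\mu$. Unfolding Definition~\ref{rla}, the middle term equals $\heckefunc{H}(L_{w'^{-1}_\mu !},\cI^{w_0!})$, and I would compute it by adapting the convolution diagram used in the proof of Proposition~\ref{Lw!w0G} (which is stated only for $n=m$) to the rectangular case $n\le m$. Concretely, one parameterizes pairs $(v,gI_H)$ with $gI_H \in \overline{\flagvar{H}^{w'^{-1}_\mu}}$ and $v\colon U^*\to L/t^rL$ compatible with the corresponding flag on $U^*/tU^*$, and restricts to the open locus where the cell is $\flagvar{H}^{w'^{-1}_\mu}$ and the induced map on associated graded is injective on the span $\Vect(u^*_j\mid j\in I_\mu)$ with image $L/tL$. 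Because $w'_\mu$ is precisely the order-preserving shuffle, a direct bookkeeping argument shows that this open locus maps isomorphically onto $\Pi^{\mu}_{N,r}$, yielding the canonical isomorphism above.

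Granting this geometric identification, $\alpha$ sends the $\iwahorihecke{G}$-basis $\{\cI^{w_0!}\otimes T_{w'_\mu}\}_\mu$ bijectively to the $\iwahorihecke{G}$-basis $\{\cI^{\mu!}\}_\mu$ of $\Theta\otimes\Qlb$ from Proposition~\ref{Cnm}, so $\alpha$ is injective and its image is exactly $\Theta\otimes\Qlb$. The principal obstacle will be verifying the isomorphism of the previous paragraph in the rectangular case: once $n<m$ the convolution diagram admits additional strata corresponding to maps of non-maximal rank, and one must show that these lower strata contribute nothing to the support of $\pi_!$, so that the isomorphism with $\cI^{\mu!}$ holds on the nose rather than only up to deeper pieces of the filtration $\Filt^d$ of Section~\ref{filtration}. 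All other ingredients—the coset decomposition of $\iwahorihecke{H}$ over $\iwahorihecke{M}$ recalled from \cite{Prasad}, the freeness of $\mathcal{S}_0$, and the freeness of $\Theta$—are supplied by the material already established in this section.
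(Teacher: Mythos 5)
Your proposal follows essentially the same route as the paper: it rests on the decomposition of $\iwahorihecke{H}$ over $\iwahorihecke{M}$ by minimal coset representatives, the freeness of $\mathcal{S}_{0}$ over $\iwahorihecke{G}$, and the identification $\heckefuncr{H}(L_{w'_{\mu}!},\cI^{w_{0}!})\iso \cI^{\mu!}$ (up to shift), which is precisely Proposition \ref{tauw0} (via Lemma \ref{affinefibres}) combined with Propositions \ref{grothen4} and \ref{Cnm}, exactly as the paper argues. The only deviation is that you re-derive this identification and insist that it holds with no shift; that refinement is in fact true for minimal representatives (such $\tau$ are increasing on $\{n+1,\dots,m\}$, so the fibre in Lemma \ref{affinefibres} is a point), but it is unnecessary, since the whole statement lives in the Grothendieck group where a shift only changes a class by a sign.
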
 
The rest of the section is devoted to the proof of these two theorems. The following lemma proves that $\mathcal{S}_{0}$ is a free module of rank one over $\iwahorihecke{G}$.
\begin{lemma}
\label{LwIw0G}
For any element $w$ in $\weyl{G},$ we have 
$$\heckefunc{G}(L_{w!},I^{w_0!})\iso \mathcal{I}^{(w\centerdot w_0)!}.$$
\end{lemma}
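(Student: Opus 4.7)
The plan is to deduce this lemma directly from Proposition \ref{grothen4} by recognizing $w_0$ as an admissible $\mu$ in that proposition. Recall that Proposition \ref{grothen4} is stated for elements of $X_G\times S_{n,m}$ of the form $(0,\mu)$, where $\mu : I_\mu \to \{1,\ldots,n\}$ is the inverse of a strictly decreasing map $\nu : \{1,\ldots,n\}\to\{1,\ldots,m\}$. First I would check that the element $w_0=(0,w_0)\in X_G\times S_{n,m}$ used in this lemma fits this description: taking $\nu(i)=n+1-i$ gives $I_{w_0}=\nu(\{1,\ldots,n\})=\{1,\ldots,n\}$ and $\nu^{-1}(j)=n+1-j$, which is exactly the longest element $w_0$ of $W_G=S_n$ viewed as a bijection $\{1,\ldots,n\}\to\{1,\ldots,n\}$. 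Thus $\mathcal{I}^{w_0!}$ is an instance of the objects $\mathcal{I}^{\mu!}$ treated in Proposition \ref{grothen4}.

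Once this identification is made, I would invoke Proposition \ref{grothen4} with $\mu=w_0$ to obtain the isomorphism
$$\heckefunc{G}(L_{w!},\mathcal{I}^{w_0!})\iso \mathcal{I}^{(w\centerdot w_0)!}.$$
For $w=t^{\lambda_1}\tau_1$, the element $w\centerdot w_0$ on the right is computed from Definition \ref{actionW} as $(\lambda_1+\tau_1(0),\tau_1 w_0)=(\lambda_1,\tau_1 w_0)$, which matches the description of the $I_H\times I_G$-orbit appearing as the image of the map $\pi^{0}$ in the proof of Proposition \ref{grothen4}: the unique non-zero coordinates of the base point are $v(u^*_{\nu(j)})=t^{a_{\tau(j)}}e_{\tau(j)}$ for $j=1,\ldots,n$, precisely the defining data of $\Pi_{N,r}^{w\centerdot w_0}$ via equation \eqref{def_of_IH_IG_orbit}.

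I do not anticipate any genuine obstacle: the analysis of the diagram $\overline{\Pi}_r^{\mu}\newtimes\overline{\flagvar{G}^w}\to \Pi_{N,r}$, the observation that restriction to the open locus gives a locally closed immersion onto the orbit $\Pi_{N,r}^{w\centerdot\mu}$, and hence the computation $\pi^0_!(\qelbar\bt L_w)\iso \mathcal{I}^{(w\centerdot\mu)!}$, have already been carried out in the proof of Proposition \ref{grothen4}. The present lemma simply isolates the special case $\mu=w_0$, which is the form in which the result is needed for establishing that $\mathcal{S}_0$ is a free rank-one $\iwahorihecke{G}$-module and for the subsequent analysis of the right $\iwahorihecke{M}$-action in Theorems \ref{S0} and \ref{avali}.
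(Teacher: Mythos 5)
Your proposal is correct, and the identification it rests on is sound: $w_0=(0,w_0)$ is exactly the element of $X_G\times S_{n,m}$ attached to the strictly decreasing map $\nu(i)=n+1-i$ (so $I_{w_0}=\{1,\dots,n\}$ and $w_0=\nu^{-1}$ is the longest element of $W_G$), hence the lemma is the special case $\mu=w_0$ of Proposition \ref{grothen4}; moreover $w\centerdot w_0=(\lambda_1,\tau_1 w_0)$ by Definition \ref{actionW}, and the base point $v(u^*_{\nu(j)})=t^{a_{\tau_1(j)}}e_{\tau_1(j)}$ matches \eqref{def_of_IH_IG_orbit}. The route is slightly different from the paper's: the paper does not cite Proposition \ref{grothen4} but re-runs the direct geometric argument for this particular $\mu$ — it introduces the convolution space $\Pi_{\mathcal{I}^{w_{0}},r}\newtimes \overline{\flagvar{G}^{w}}$ of pairs $(v,gI_G)$ with $\overline{v}$ killing $u^*_m,\dots,u^*_{n+1}$ and sending $\Vect(u^*_n,\dots,u^*_{n-i})$ to $L'_{i+1}$, restricts $\pi$ to the open locus where these maps are isomorphisms, and checks that $\pi^0$ is an isomorphism onto $\Pi_{N,r}^{w\centerdot w_0}$ — which is precisely the proof of Proposition \ref{grothen4} specialized to $\nu(j)=n+1-j$. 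Your deduction avoids this duplication at the cost of the (small but necessary) check that $w_0$ really is one of the admissible $\mu$'s and that the closure conditions agree in this case, which you do carry out; the paper's version is self-contained and makes the convolution space explicit, which it then reuses for the analogous $H$-side computation (Lemma \ref{LwIw0H} and Proposition \ref{tauw0}), but mathematically the two arguments have identical content.
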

\begin{proof}
For a point $gI_{G}$ in $\overline{\flagvar{G}^{w}}$, let $L'=gL$ and equip $L'/tL'$ with the flag $L_i^{'}=g(L_i)$, for $1\leq i\leq n.$ 
Let $\Pi_{\mathcal{I}^{w_{0}},r}\newtimes \overline{\flagvar{G}^{w}}$ be the scheme classifying pairs $(v,gI_{G})$, where $gI_{G}$ is in $\overline{\flagvar{G}^{w}}$, and $v$ is a map  from $U^*$ to $L'/t^{r}L$ such that the induced map 
$$\overline{v}:U^*/tU^{*}\longrightarrow L'/tL'$$
sends $u^*_m,\dots,u_{n+1}^{*}$ to zero and $\textrm{Vect}(u^{*}_n,\dots,u^{*}_{n-i} )$ to $L'_{i+1}$ for $i=0,\dots,n-1.$ Let 
\begin{equation}
\label{mappi0}
\pi:\Pi_{\mathcal{I}^{w_{0}},r}\newtimes \overline{\flagvar{G}^{w}}\longrightarrow \Pi_{N,r}
\end{equation}
be the proper map  sending a couple $(v,gI_{G})$ to $v.$ By definition we have 
$\heckefunc{G}(L_w,\mcI^{w_{0}})\iso \pi_{!}(\qelbar\bt L_w).$ Let $\Pi_{\mathcal{I}^{w_{0}},r}^{0}\newtimes \flagvar{G}^{w}$ be the open subscheme of $\Pi_{\mathcal{I}^{w_{0}},r}\newtimes \overline{\flagvar{G}^{w}}$ consisting of pairs $(v,I_{G})$ such that $gI_G$ is in $\flagvar{G}^{w},$  and the map $\overline{v}:Vect(u^*_{n},\dots,u^{*}_{n-i})\longrightarrow L'_{i+1}$ is an isomorphism for $i=0,\dots,n-1.$ Then $\heckefunc{G}(L_{w!},\mathcal{I}^{w_0!})\iso \pi_{!}^{0}(\qelbar\bt L_w),$ where $\pi^{0}:\Pi_{\mathcal{I},r}^{0}\newtimes \mathcal{F}\ell_{G}^{w}\longrightarrow\Pi_{N,r}$ is the restriction of $\pi.$ The image of $\pi^{0}$ equals $\Pi_{N,r}^{w\centerdot w_0}$ and $\pi^{0}$ is an isomorphism onto its image. Thus we have $\heckefunc{G}(L_{w!},\mathcal{I}^{w_0!})\iso \mathcal{I}^{(w\centerdot w_0)!}.$
\end{proof}

Now let $w=t^{\lambda}\tau$ be an element of $\weyl{H}.$ The cocharacter $\lambda$ in $X_{H}$ is of the form $(a_{1},\dots,a_{m})$ with $a_{i}$ in $\Z$. Choose  two integers $N,r$ such that $-N\leq a_{i}< r$ for all $i$. Denote by $U_{1}\subset U_{2}\subset \dots \subset U_{m}$  the standard flag over $U/tU.$ We define the scheme $\Pi_{\mathcal{I}^{w_{0}},r}\times \overline{\flagvar{H}^{w}}$  in the same way we did for $G$. For any point  $hI_{H}$ in $\overline{\flagvar{H}^{w}}$, we put $U^{'}=hU$ and equip $U^{'}/tU^{'}$ with the complete flag $U^{'}_{i}=hU_{i}.$ Then  $\Pi_{\mathcal{I}^{w_{0}},r}\times \overline{\flagvar{H}^{w}}$ is the scheme classifying pairs $(v,hI_{H}),$ where  $hI_{H}$ is in $\overline{\flagvar{H}^{w}}$ and  $v$ is a map  from $L^{*}$ to $U^{'}/t^{r}U$ such that the induced map
$$\overline{v}:L^{*}/tL^{*}\longrightarrow U^{'}/tU^{'}$$
sends $\Vect(e_{n}^{*},\dots,e_{n-i}^{*})$ to $U_{i+1}^{'}$ for all $i=1,\dots,n-1.$ Let $\pi$ be the projection 
$$\pi: \Pi_{\mathcal{I}^{w_{0}},r}\times \overline{\flagvar{H}^{w}}\longrightarrow\Pi_{N,r}.$$
Then by definition we obtain $\heckefunc{H}(L_{w},\mcI^{w_{0}})\iso \pi_{!}(\qelbar\bt L_{w}).$ Let $\Pi_{\mathcal{I}^{w_{0}},r}^{0}\times \flagvar{H}^{w}$ be the open subscheme of $\Pi_{\mathcal{I}^{w_{0}},r}\times \overline{\flagvar{H}^{w}}$ defined by the additional condition that the above map  $\overline{v}:L^{*}/tL^{*}\longrightarrow U_{n}^{'}$ is an isomorphism. Let 
\begin{equation}
\label{pi0}
 \pi^{0}:\Pi_{\mathcal{I}^{w_{0}},r}^{0}\times \flagvar{H}^{w}\longrightarrow \Pi_{N,r}
 \end{equation}
  be the restriction of  $\pi.$  Then we have 
$$\heckefunc{H}(L_{w!},\mcI^{w_{0}!})\iso \pi^{0}_{!}(\qelbar\bt L_{w}).$$

\begin{lemma}
\label{LwIw0H}
Assume that $\lambda=(0,\dots,0,a_{n+1},\dots, a_{m})$ and that the coefficients $a_{i}$ are non negative. If $\tau$ is a permutation acting trivially on $\{1,\dots,n\}$ and permuting $\{n+1,\dots,m\},$ we have 
$$\heckefunc{H}(L_{w!},\mcI^{w_{0}!})\iso \mcI^{w_{0}!}[n\langle\lambda , \check{\omega}_{m}\rangle-\ell(w)].$$
\end{lemma}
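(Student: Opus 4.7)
The plan is to unwind the definition~\eqref{pi0} of $\heckefunc{H}(L_{w!}, \mcI^{w_0!})$ as $\pi^0_!(\qelbar\bt L_w)$ and to analyze the map $\pi^0:\Pi^0_{\mcI^{w_{0}},r}\times \flagvar{H}^w\longrightarrow \Pi_{N,r}$ in detail under the special hypotheses on $w=t^\lambda \tau$. First I would observe that because $\lambda$ vanishes on the first $n$ coordinates and $\tau$ fixes $\{1,\dots,n\}$, any representative $h$ of a point of $\flagvar{H}^w$ satisfies $hu_i=u_i$ for $i\leq n$. This forces $U'_n=U_n$ and $U'\subseteq U$ as $\mathcal{O}$-lattices, with the quotient $U/U'$ supported on the coordinates $n+1,\dots,m$ and of length $\langle \lambda,\check\omega_m\rangle$.

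Next I would show that $\pi^0$ factors through the open stratum $\Pi_{N,r}^{w_0}$. For any $(v,hI_H)$ in the source, the isomorphism condition forces $\bar v$ to map onto $U'_n=U_n$ in a flag-compatible way, and a computation analogous to the proof of Proposition~\ref{tildew} identifies the image of $v$ in $\Pi_{N,r}$ with a point of $\Pi_{N,r}^{w_0}$. The main step would then be the fiber analysis over a given $V\in \Pi_{N,r}^{w_0}$: the fiber consists of pairs $(v,hI_H)$ where $v$ is the unique lift of $V$ to $\Hom(L^*,U'/t^rU)$ (uniqueness comes from injectivity of $U'/t^rU\hookrightarrow t^{-N}U/t^rU$), and $hI_H$ ranges over the locus
\[
\{hI_H\in\flagvar{H}^w\;:\;V(L^*)\subseteq U'/t^rU\}.
\]
A linearized computation using the Iwahori description of $\Pi_{N,r}^{w_0}$ (in particular that for $V$ in this orbit the $u_i$-component of $V(e_j^*)$ lies in $t\mathcal{O}/t^r\mathcal{O}$ for $i>n$) together with the parametrization of $\flagvar{H}^w$ should show that this locus is an affine subspace of $\flagvar{H}^w\cong\mathbb{A}^{\ell(w)}$ of codimension $n\langle\lambda,\check\omega_m\rangle$, hence isomorphic to $\mathbb{A}^e$ with $e=\ell(w)-n\langle\lambda,\check\omega_m\rangle$.

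Finally I would assemble the pushforward. With $\pi^0$ an $\mathbb{A}^e$-fibration over $\Pi_{N,r}^{w_0}$, the object $\pi^0_!(\qelbar\bt L_w)$ is $\mcI^{w_0!}$ up to a cohomological shift. Combining the shift $[\ell(w)+\dimrel(pr)]$ from the perverse normalization of $\qelbar\bt L_w$, the shift $[-2e]$ from compactly supported pushforward along the $\mathbb{A}^e$-fibration, and the adjustment $-[\dim\Pi_{N,r}^{w_0}]$ needed to pass to the IC-normalization on the target, and using $\dimrel(pr)=n(n+1)/2+nm(r-1)-n\langle\lambda,\check\omega_m\rangle$ together with $\dim\Pi_{N,r}^{w_0}=n(n+1)/2+nm(r-1)$, one obtains the claimed shift $n\langle\lambda,\check\omega_m\rangle-\ell(w)$.

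The hard part will be verifying the affine-space structure of the fiber locus: one must show that the $n\langle\lambda,\check\omega_m\rangle$ linear constraints cut out by the inclusion $V(L^*)\subseteq U'/t^rU$ are independent and that they carve out an affine subspace of the expected codimension inside $\flagvar{H}^w$. Once this is established the remaining dimension bookkeeping is routine, and the result follows.
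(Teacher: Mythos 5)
Your proposal follows essentially the same route as the paper's proof: unwind $\pi^{0}$, identify its image as the orbit through $w_{0}$, observe that $\pi^{0}$ is an affine fibration, and deduce the shift $n\langle\lambda,\check{\omega}_{m}\rangle-\ell(w)$ from a dimension count. The only (immaterial) difference is that you compute the fibre dimension directly as a codimension count inside $\flagvar{H}^{w}$, whereas the paper obtains it by comparing $\dim(\Pi_{\mathcal{I}^{w_{0}},r}^{0}\newtimes\flagvar{H}^{w})$ with $\dim\Pi_{0,r}^{w_{0}}$; your shift bookkeeping agrees with the paper's.
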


\begin{proof}
In this case, the image of the map $\pi^{0}$ $\eqref{pi0}$ is exactly the orbit $\Pi_{0,r}^{w_0}$ and the fibre of $\pi^{0}$ is an affine space. We need to compute the dimension of the fibres. Note that $\Pi_{0,r}^{w_0}$ has dimension $(r-1)mn+\frac{n(n+1)}{2}.$ The scheme $\Pi_{\mathcal{I}^{w_{0}},r}^{0}\newtimes \mathcal{F}\ell_{H}^{w}$ is of dimension $\ell(w)-n\langle\lambda,\check{\omega}_{m}\rangle+(r-1)nm+\frac{n(n+1)}{2}.$  This implies that the dimension of the fibre of $\pi^{0}$ equals $\ell(w)-n\langle \lambda,\check{\omega}_{m}\rangle.$ This yields the result. 
\end{proof}

\begin{proposition}
\label{tauw0}
Let $\tau$ be in $W_{H}.$ Then 
$$\heckefunc{H}(L_{\tau!},\mcI^{w_{0}!})\iso \mcI^{\nu!}[r],$$
where $\nu=(0,w_{0}\tau^{-1})$ is an element of $X_{H}\times S_{n,m}$ and $r$ is the dimension of the fibre of the map $\pi^{0}$ in $\eqref{pi0}$.
\end{proposition}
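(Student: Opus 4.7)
The proof should follow the pattern of Lemma \ref{LwIw0H}, with the main difficulty being a careful combinatorial analysis of the image and fibers of the map $\pi^0$ in $\eqref{pi0}$ in the case $w = \tau \in W_H$, where $\tau$ may act non-trivially on $\{1,\ldots,n\}$ (unlike in Lemma \ref{LwIw0H}, where $\tau$ was assumed to preserve the block decomposition).

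The first step is to identify the image of $\pi^0$. Since $\tau$ lies in the finite Weyl group $W_H \subset \weyl{H}$, I would choose representatives $h \in H(\locring)$ for points of $\flagvar{H}^\tau$, so that $U' = hU = U$ as lattices while the complete flag $U'_\bullet = hU_\bullet$ differs from the standard flag by $\tau$. The $n$-plane $U'_n$ then lies in the Schubert cell of $\Gr(n, U/tU)$ corresponding to the subset $\tau(\{1,\ldots,n\}) \subset \{1,\ldots,m\}$. The requirement that $\bar v : L^*/tL^* \to U'_n$ be a filtration-preserving isomorphism, combined with the $I_H \times I_G$-action, normalizes $v$ modulo $tL$ to the canonical form $v(u^*_{\tau(i)}) \equiv e_{n+1-i}$ for $i=1,\ldots, n$ and $v(u^*_j) \equiv 0$ for $j \notin \tau(\{1,\ldots,n\})$. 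Comparing with $\eqref{def_of_IH_IG_orbit}$, this is precisely the model point of $\Pi_{N,r}^\nu$ for $\nu = (0, w_0\tau^{-1}) \in X_H \times S_{n,m}$, where the bijection $\tau(i) \mapsto n+1-i$ is read off from the two filtrations.

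The second step is to verify that $\pi^0$ is surjective onto $\Pi_{N,r}^\nu$ and that its fibers are affine spaces of constant dimension $r$. Given $v \in \Pi_{N,r}^\nu$, the reduction $\bar v$ uniquely determines $U'_n$, while the choices of compatible complete flags $U'_\bullet$ extending $U'_n$ within $\flagvar{H}^\tau$ form an affine space; together with the free higher-order terms of $v$ modulo $t^r$, this yields the fiber. A dimension count analogous to the one in Lemma \ref{LwIw0H}, comparing $\dim(\Pi^0_{\mathcal{I}^{w_0},r} \tilde\times \flagvar{H}^\tau)$ with $\dim \Pi_{N,r}^\nu$, gives the stated value of $r$.

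Combining the two steps, $\pi^0_!(\qelbar \bt L_{\tau!})$ is the appropriately shifted constant perverse sheaf on $\Pi_{N,r}^\nu$, which by definition is $\mcI^{\nu!}[r]$. The main obstacle will be executing the combinatorial bookkeeping of the first step cleanly, namely matching the Schubert-cell position of $U'_n$ with the indexing of $I_H \times I_G$-orbits by $X_H \times S_{n,m}$ and confirming that the resulting element is $(0, w_0\tau^{-1})$; the dimension count in the second step is then a straightforward adaptation of the argument in Lemma \ref{LwIw0H}.
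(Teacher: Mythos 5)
Your proposal follows the same route as the paper's proof: since $\tau$ lies in the finite Weyl group, $U'=hU=U$ and only the flag moves, the image of $\pi^{0}$ is the single orbit $\Pi_{0,1}^{\nu}$ with $\nu=(0,w_{0}\tau^{-1})$, $I_{\nu}=\tau(\{1,\dots,n\})$ (your normal form $v(u^{*}_{\tau(i)})\equiv e_{n+1-i}$ is exactly the paper's model point of this orbit), and the statement then reduces to the fibres of $\pi^{0}$ being affine spaces, with constancy of their dimension over the orbit coming from $I_{H}\times I_{G}$-equivariance. So the skeleton is right. But you have misjudged where the work lies. The combinatorial identification of the image, which you present as the main obstacle, is dispatched in one sentence in the paper; the genuinely non-trivial step is the one you declare straightforward: showing that the fibre of $\pi^{0}$ over a point $v$ of the orbit, namely the variety of complete flags in relative position $\tau$ with the standard flag whose first $n$ members are the partial flag $\bar v(\Vect(e_{n}^{*},\dots,e_{n-i+1}^{*}))$, $i=1,\dots,n$, is an affine space. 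The paper isolates precisely this as Lemma \ref{affinefibres} and proves it by an inductive argument, adding one subspace at a time and checking that each forgetful map is a $B_{H}$-equivariant affine fibration, then using that an affine fibration over an affine space is trivial. Asserting the affine-space property without such an argument (or without invoking the standard fact that a Schubert cell fibres over its image in a partial flag variety with Schubert cells of the Levi as fibres) leaves the key point of the proof unproved.

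Two smaller inaccuracies: the fibre of $\pi^{0}$ over a fixed $v$ consists only of the flags $hI_{H}$ --- the ``higher-order terms of $v$'' are coordinates on source and target preserved by $\pi^{0}$, not fibre directions (the paper in fact runs the argument with $N=0$, $r=1$, where there are none), and since the shift in the proposition is by definition the fibre dimension of $\pi^{0}$, counting those terms into the fibre would corrupt the shift; also the flags in the fibre must extend the entire partial flag determined by $\bar v$, not merely $U'_{n}$ (though for $i\le n$ the $U'_{i}$ are forced to coincide with it, so this is only a matter of phrasing).
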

Before proving this proposition  we will need the following lemma:

\begin{lemma}
\label{affinefibres}
Let $U_{1}\subset\dots \subset U_{m}=U_{0}$ be a complete flag on $U_{0}$.
Consider a partial flag 
$$V_{1}\subset V_{2}\subset \dots \subset V_{n}\subset U_{0}$$
inside $U_{0}.$
Let $\tau$ be a reflection in  the finite Weyl group $W_{H}$ of $H.$ Denote by $Y$ the variety of complete flags
$$V_{1}^{'}\subset \dots\subset V_{m}^{'}$$
which are in relative position $\tau$ with respect to the standard complete flag $U_{1}\subset\dots\subset U_{m}$
such that $V_{i}^{'}=V_{i}$ for all $i=1,\dots, n.$
Then the variety $Y$ is isomorphic to a finite dimensional affine space.
\end{lemma}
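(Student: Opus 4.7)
\emph{Proof plan.} The strategy is to realize $Y$ as the fiber of a natural projection between Schubert cells in the finite flag variety of $H$, and to use the standard fact that such projections are trivial affine-space bundles over their images.

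First I identify the variety of complete flags on $U_0$ with $H/B_H$, where $B_H$ is the Borel subgroup of $H=\GL(U_0)$ stabilizing the flag $U_\bullet$. Under this identification, the set of complete flags in relative position $\tau$ with $U_\bullet$ is the Schubert cell $C_\tau=B_H\tau B_H/B_H$, which is known to be isomorphic to the affine space $\mathbb{A}^{\ell(\tau)}$, explicitly via the parameterization $u\mapsto u\tau B_H$ by the unipotent group $U_{B_H}\cap\tau U_{B_H}^{-}\tau^{-1}$.

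Next, let $P\subset H$ denote the standard parabolic stabilizing $U_1\subset\ldots\subset U_n$, whose Levi quotient is $\Gm^n\times\GL_{m-n}$. The natural projection $p:H/B_H\to H/P$ forgets the last $m-n$ steps of a complete flag. The variety of completions of the partial flag $(V_1,\ldots,V_n)$ is then precisely the fiber of $p$ over the point $x\in H/P$ corresponding to $(V_1,\ldots,V_n)$. Hence $Y=C_\tau\cap p^{-1}(x)$.

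Now I use the standard factorization $\tau=\tau^P\cdot\tau_P$, where $\tau^P$ is the minimal-length representative of the coset $\tau W_P$ in $W_H/W_P$ and $\tau_P\in W_P$, with $\ell(\tau)=\ell(\tau^P)+\ell(\tau_P)$. This produces a compatible product decomposition of the unipotent group parameterizing $C_\tau$, from which it follows that $p|_{C_\tau}:C_\tau\to p(C_\tau)=B_H\tau^P P/P\subset H/P$ is a trivial affine-space bundle of rank $\ell(\tau_P)$. Consequently $Y$ is either empty (if $x$ does not lie in $B_H\tau^P P/P$) or isomorphic to $\mathbb{A}^{\ell(\tau_P)}$; in either case $Y$ is a finite-dimensional affine space, as claimed.

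The main technical point is the third step, namely the compatibility of the unipotent parameterization of $C_\tau$ with the parabolic projection $p$. This is a standard consequence of Bruhat-type decompositions for projections between flag varieties; the hypothesis that $\tau$ is a reflection makes the resulting combinatorics particularly simple to describe in explicit coordinates, but the argument goes through for any $\tau\in W_H$.
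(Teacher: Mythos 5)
Your proposal is correct, but it takes a genuinely different route from the paper's proof. You realize $Y$ as the fibre over $x=(V_1,\dots,V_n)$ of the parabolic projection $p\colon H/B_H\to H/P$ restricted to the Schubert cell $C_\tau$, and then use the factorization $\tau=\tau^P\tau_P$ with $\ell(\tau)=\ell(\tau^P)+\ell(\tau_P)$ to see that $p|_{C_\tau}$ is a trivial affine-space bundle of rank $\ell(\tau_P)$ over the cell $B_H\tau^P P/P$, so that $Y\cong\mathbb{A}^{\ell(\tau_P)}$ whenever it is nonempty. The paper argues by hand instead: it encodes Schubert cells via the jump sets $I(V)$, builds the tower of forgetful maps $Y_m\to Y_{m-1}\to\dots\to Y_n=\Spec(\K)$ which adds one member of the flag at a time, checks that each step is obtained by base change from a $B_H$-equivariant affine fibration $Z_{I_k,I_{k+1}}\to Z_{I_k}$, and concludes with the remark that an affine fibration over an affine space is trivial. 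The underlying geometry is the same, but the organization differs: your argument is shorter and more precise, since it identifies the dimension of $Y$ as $\ell(\tau_P)$ (the paper only produces an unspecified dimension $r$), whereas the paper's proof is self-contained at exactly the point you defer to the standard fact that a Schubert cell of $H/B_H$ maps onto a Schubert cell of $H/P$ as a trivial affine bundle — if you want a complete argument you should include the inversion-set computation $N(\tau)=N(\tau^P)\sqcup\tau^P N(\tau_P)$ giving the product decomposition of the unipotent group parameterizing $C_\tau$. Two further remarks: like the paper, you must either exclude the empty case or note that it does not arise in the application (the paper states the nonemptiness condition $I(V_n)=\{\tau(1),\dots,\tau(n)\}$ and assumes it); and your observation that the reflection hypothesis on $\tau$ is never used is consistent with the paper, whose own proof, and whose application in Proposition~\ref{tauw0}, treat arbitrary $\tau\in W_H$.
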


\begin{proof}
The stabilizer of the complete flag $U_{1}\subset \dots \subset U_{m}=U_{0}$ in $H$ is the Borel subgroup $B_{H}.$ For $i=1,\dots ,m$ fix a basis $u_{i}$ of $U_{0}$
 such that $U_{i}=\Vect(u_{1},\dots,u_{i}).$ The variety $H/B_{H}$ is identified with the complete flags in $U_{0}.$ Given a vector subspace $V$ of $U_{0}$ of dimension $k$, we associate to this subspace a subset $I(V)$ of $k$ elements in $\{1,\dots,m\}$ defined by  $I(V)=\{1\leq i\leq m \vert \dim(V\cap U_{i})>\dim(V\cap U_{i-1})\}.$ 
 Thus, for $w$ in $W_{H}$, the orbit $B_{H}wB_{H}/B_{H}$ is the variety of complete flags 
 $$U_{1}^{'}\subset \dots \subset U_{m}^{'}$$
 on $U_{0}$ such that $1\leq i\leq m$ we have  $I(U_{i}^{'})=\{w(1),\dots w(i)\}$.
 \par\medskip
 Let $\mathcal{V}_{n}$ be a flag $V_{1}\subset \dots \subset V_{n}\subset U_{0}$ such that $\dim(V_{i})=i.$ The space $Y$ is the variety of complete flags $V_{1}^{'}\subset \dots 
 \subset V_{m}^{'}$ lying in the orbit $B_{H}wB_{H}/B_{H}$ and satisfying $V_{i}^{'}=V_{i}$ for all $1\leq i\leq n.$ In order that the space $Y$ be non-empty, we must have 
 $I(V_{n})=\{w(1),\dots w(n)\}.$ Assume that this is true. 
 Given a subset of $k$ elements $I_{k}$ in $\{1,\dots m\}$, denote by $Z_{I_{k}}$ the variety of subspaces $V$ of $U_{0}$ such that $I(V)=I_{k}$ (in particular we have $\dim( V)=k$).
 Given another subset $I_{k+1}$ of $k+1$ elements containing  $I_{k}$, let $Z_{I_{k},I_{k+1}}$ be the variety of pairs $(V\subset V^{'})$, where $V$  lies in $Z_{I_{k}}$ and $V^{'}$ lies in $Z_{I_{k+1}}.$ Denote by $\pi$ the projection from $Z_{I_{k},I_{k+1}}$ onto $Z_{I_{k}}$ sending $(V\subset V^{'})$ to $V$. Let us prove that the map $\pi$ is $B_{H}$-equivariant affine fibration. 
 \par\smallskip
 For $V$ in $Z_{I_{k}}$ denote by $\overline{U}_{i}$ the image of $U_{i}$ under the map $U_{0}\to U_{0}/V.$ Then $\overline{U}_{i}=\overline{U}_{i-1}$ if and only if $i$ lies in $I_{k}.$ Denote by  $s$ the single element of $I_{k+1}-I_{k}$. The fibre of the map $\pi$ identifies with the variety of $1$-dimensional subspaces $V^{'}/V \subset U_{0}/V$ such that $V^{'}/V$ is a subset of $\overline{U}_{s}$ and $V^{'}/V$ is not contained in $\overline{U}_{s-1}.$ This fibre is affine and since the space $Z_{I_{k}}$ is $B_{H}$-homogeneous, the map $\pi$ is a $B_{H}$-equivariant affine fibration. 
 \par\medskip
 For $r\geq n$ denote by $Y_{r}$ the variety of flags $V_{1}\subset \dots \subset V_{n}\subset V_{n+1}^{'}\subset \dots \subset V_{r}^{'}$ such that $I(V_{i})=\{\tau(1),\dots,\tau(i)\}$ for $n\leq i\leq r$. We have the forgetful maps 
 $$Y_{m}\overset{f_{m}}\longrightarrow Y_{m-1}\overset{f_{m-1}}\longrightarrow \dots \overset{f_{n+1}}\longrightarrow Y_{n}=\mathrm{Spec}(\K).$$
 Any of the map $f_{i}$ above is obtained by a base change from the map $\pi$ for a suitable pair ($I_{k}\subset I_{k+1}$). The fibre of a map $f_{r}$ depends only on $V^{'}_{r-1}$ and not on the smaller 
 $V_{j}^{'}$ for $j\leq r-2$. As any affine fibration over an affine space is trivial this leads to the result and $Y$ is an affine space of dimension $r$ for some $r\geq n.$
\end{proof}

\begin{proof}[Proof of Proposition $\ref{tauw0}$]
Let  us precise the definition of $\nu$: $\nu=(0,w_{0}\tau^{-1})$ is an element of $X_{H}\times S_{n,m},$ where the set $I_{w_{0}\tau^{-1}}$ is the set $\tau(\{1,\dots,n\})$ and $w_{0}\tau^{-1}:I_{w_{0}\tau^{-1}}\to \{1,\dots,n\}$ is the corresponding bijection. Consider the map
$$\pi^{0}:\Pi_{\mcI^{w_{0}},1}^{0}\times \flagvar{H}^{\tau}\longrightarrow \Pi_{0,1}.$$  
The fibres of the map $\pi^{0}$ are affine spaces according to Lemma $\ref{affinefibres}.$ We denote by $r$ their dimension.  Since  the image of $\pi^{0}$ is  the orbit $\Pi_{0,1}^{\nu}.$ We obtain that $\heckefunc{H}(L_{\tau!},\mcI^{w_{0}!})\iso \mcI^{\nu!}[r].$
\end{proof}
\begin{remark}
If the permutation $\tau$ is a actually a permutation of $\{1,\dots,n\}$ and acts trivially on $\{n+1,\dots,m\}$ then the shift in the above formula disappears and the map $\pi^{0}$ will be an isomorphism onto its image $\Pi_{0,1}^{\nu}.$
\end{remark}

Let $M$ be the standard Levi subgroup in $H$ corresponding to the partition $(n,m-n)$ of $m$. Then $M$ is of the form $M_{1}\times M_{2}$, where $M_{1}\iso \GL_{n}$ and $M_{2}\iso \GL_{m-n}.$ Write $\iwahorihecke{M}$ for the Iwahori Hecke algebra associated to $M$ viewed as subalgebra of $\iwahorihecke{H}$. We have naturally $\iwahorihecke{M}\iso \iwahorihecke{M_{1}}\otimes_{\qelbar}\iwahorihecke{M_{2}}.$ We will denote by $X_{M_{i}}$ the coweight lattice of $M_{i},$ for $i=1,2.$  
The space $\mathcal{S}_{0}$ is not a $\iwahorihecke{M}$-submodule for the natural left action of $\iwahorihecke{M}$ on $\mathcal{S}(\Pi(F))^{I_{H}\times I_{G}}.$ For instance, if $\lambda=(1,\dots,1,0\dots,0)$ where $1$ appears $n$ times, then the complex $\heckefunc{H}(L_{t^{\lambda!}},\mcI^{w_{0}!})$ doesn't occur in $\mathcal{S}_{0}.$  
 We will consider this right action and will show that $S_{0}$ is a right $\iwahorihecke{M}$-module  under this right action. Remind that the  right action of $\iwahorihecke{M}$ commutes with the left action of $\iwahorihecke{G}.$
  \begin{lemma}
  For $\tau$ a simple reflection in the finite Weyl group $W_{M}$ of $M$, $\heckefuncr{H}(L_{\tau!},\mcI^{w_{0}!})$ lies in $\mathcal{S}_{0}.$ 
  \end{lemma}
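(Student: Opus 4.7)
The plan is to reduce the right Hecke action to the left one and then appeal to the two preceding computations (Proposition~\ref{tauw0} together with its Remark, and Lemma~\ref{LwIw0H}), one for each type of simple reflection in $W_M$.

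First I would use Definition~\ref{rla} together with the identity $\star^{\sharp}(L_{\tau!})\iso L_{\tau^{-1}!}$. Since $\tau$ is a simple reflection we have $\tau^{-1}=\tau$, so $\heckefuncr{H}(L_{\tau!},\mcI^{w_{0}!})$ coincides with $\heckefunc{H}(L_{\tau!},\mcI^{w_{0}!})$. This reduces the problem to a left-action computation, which is exactly what Proposition~\ref{tauw0} and Lemma~\ref{LwIw0H} address.

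Next, I would split into two cases using $M\iso \GL_{n}\times \GL_{m-n}$, so that $W_{M}=S_{n}\times S_{m-n}$ where the first factor permutes $\{1,\dots,n\}$ and the second $\{n+1,\dots,m\}$. If $\tau$ lies in the first factor, then $\tau(\{1,\dots,n\})=\{1,\dots,n\}$ and $\tau$ acts trivially on $\{n+1,\dots,m\}$, so Proposition~\ref{tauw0} gives $\heckefunc{H}(L_{\tau!},\mcI^{w_{0}!})\iso \mcI^{\nu!}[r]$ with $\nu=(0,w_{0}\tau^{-1})$, and the Remark right after that proposition says the shift $r$ vanishes. To see $\mcI^{\nu!}\in\mathcal{S}_{0}$, set $\tau_{1}=w_{0}\tau^{-1}w_{0}^{-1}\in W_{G}\subset\weyl{G}$; then $\tau_{1}\centerdot w_{0}=(0,\tau_{1}w_{0})=(0,w_{0}\tau^{-1})=\nu$ by Definition~\ref{actionW}, so $\mcI^{\nu!}=\mcI^{(\tau_{1}\centerdot w_{0})!}$ is one of the generators of $\mathcal{S}_{0}$.

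If instead $\tau$ lies in the second factor $S_{m-n}$, it acts trivially on $\{1,\dots,n\}$ and permutes $\{n+1,\dots,m\}$, so Lemma~\ref{LwIw0H} applies with $\lambda=0$ and $\ell(\tau)=1$, yielding $\heckefunc{H}(L_{\tau!},\mcI^{w_{0}!})\iso \mcI^{w_{0}!}[-1]$. Since $\mcI^{w_{0}!}=\mcI^{(e\centerdot w_{0})!}$ is the distinguished generator of $\mathcal{S}_{0}$, the shift lies in $\mathcal{S}_{0}$ in the Grothendieck group tensored with $\qelbar$. The only mildly delicate point is the bookkeeping in Case~1, namely recognising that the bijection $w_{0}\tau^{-1}:\{1,\dots,n\}\to\{1,\dots,n\}$ can be rewritten as $\tau_{1}w_{0}$ for some $\tau_{1}\in W_{G}$, which amounts to conjugating $\tau^{-1}$ by the longest element $w_{0}$.
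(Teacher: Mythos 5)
Your proof is correct and takes essentially the same route as the paper: the paper likewise reduces the right action to the left one (using $\tau^{-1}=\tau$ and $\star^{\sharp}(L_{\tau!})\iso L_{\tau^{-1}!}$) and then invokes Proposition~\ref{tauw0}, which applies uniformly to every $\tau\in W_{H}$, so your separate treatment of the $S_{m-n}$ factor via Lemma~\ref{LwIw0H} is just a harmless variant giving the same conclusion $\mcI^{w_{0}!}[-1]$. Your explicit identification $\nu=(0,w_{0}\tau^{-1})=\tau_{1}\centerdot w_{0}$ with $\tau_{1}=w_{0}\tau^{-1}w_{0}^{-1}$ (which uses that $\tau\in W_{M}$ preserves $\{1,\dots,n\}$) spells out a point the paper leaves implicit when it concludes that the result occurs in $\mathcal{S}_{0}$.
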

  \begin{proof}
According to Lemma $\ref{tauw0},$ we have
\begin{equation}
\label{simplereflection}
\heckefuncr{H}(L_{\tau!},\mcI^{w_{0}!})\iso \heckefunc{H}(L_{\tau ! },\mcI^{w_{0}!})\iso \mcI^{\nu !}[r],
\end{equation}
where $\nu=(0,w_{0}\tau)$ is viewed as an element of $X_{H}\times S_{n,m}.$   Thus $\heckefuncr{H}(L_{\tau!},\mcI^{w_{0}!})$ occurs in $\mathcal{S}_{0}.$ 
\end{proof}

\begin{lemma}
\label{lemme}
Let $\omega=(1,\dots,1)$ be in $X_{H},$ $\mu_{1}=(a_{1},\dots,a_{n})$ be in $X_{M_{1}}$ and $\mu_{2}=(a_{n+1},\dots,a_{m})$ be in $X_{M_{2}}.$  Let $\lambda$ be the coweight  $\mu_{1}+\mu_{2}$ in $X_{H}$ and assume that if $m\geq i > n\geq j\geq 1$ then $a_{i}\geq a_{j}.$ We also fix two integers $r,N$  such that $-N\leq a_{i}< r$ for all $i.$ Let $v$ be a $\locring$-linear map from $L^{*}$ to $t^{\lambda}U/t^{r}U$  such that
for $0\leq i < n,$ the induced map 
$$\overline{v}:L^{*}/tL^{*}\longrightarrow t^{\lambda}U/t^{\lambda+\omega}U$$
 sends $\Vect(e_{n}^{*},\dots ,e_{n-i}^{*})$ isomorphically onto $\Vect(t^{a_{1}}u_{1},\dots t^{a_{i+1}}u_{i+1}).$  Denote by $\nu$ the element $(w_0(\mu_{1}),w_0)$ in $X_{G}\times S_{n,m}.$ Then  $v$ is an element of the orbit $\Pi_{N,r}^{\nu}.$
\end{lemma}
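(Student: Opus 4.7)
The plan is to produce explicit elements $h\in I_H$ and $g\in I_G$ whose action carries $v$ to the orbit representative $v_0$ of $\Pi_{N,r}^\nu$, establishing that $v$ lies in that orbit. First I would unfold definition~\eqref{def_of_IH_IG_orbit} for $\nu=(w_0(\mu_1),w_0)$: the representative is $v_0(u_i^*)=t^{a_i}e_{n+1-i}$ for $1\leq i\leq n$ and $v_0(u_i^*)=0$ for $i>n$, or equivalently, viewed as a map $L^*\to U(F)$, $v_0(e_j^*)=t^{a_{n+1-j}}u_{n+1-j}$. A direct check shows that $v_0$ itself satisfies the hypothesis: $v_0(\Vect(e_n^*,\ldots,e_{n-i}^*))=\Vect(t^{a_1}u_1,\ldots,t^{a_{i+1}}u_{i+1})$ exactly.

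I then represent $v$ and $v_0$ as $m\times n$ matrices $V,V^0$ in the bases $(u_i)_{i=1}^m$ of $U$ and $(e_{n-j+1}^*)_{j=1}^n$ of $L^*$, so that $V^0$ has $t^{a_j}$ at position $(j,j)$ for $j\leq n$ and zeros elsewhere. The goal is to find $(h,g)\in I_H\times I_G$ such that the natural action brings $V$ to $V^0$. I carry this out in two phases.

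Phase A normalizes the top $n\times n$ block of $V$. The hypothesis on $\bar v$ translates into the statement that this block lies in the Iwahori–Bruhat cell $I_{\GL_n}\,t^{\mu_1}\,I_{\GL_n}$ of $\GL_n(F)$: indeed the row-wise divisibility $V_{ij}\in t^{a_i}\locring$, the vanishings $V_{ij}\in t^{a_i+1}\locring$ for $i>j$ (in this block), and the nested non-degeneracy modulo $t$ of the upper-left $j\times j$ leading minors (after removing the $t^{a_i}$ factors) are precisely the defining conditions of this cell. Applying $I_G$-column operations together with the subgroup $I_{\GL_n}\hookrightarrow I_H$ acting on the top $n$ rows of $V$, I normalize this block to $\diag(t^{a_1},\ldots,t^{a_n})$. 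The bottom $(m-n)\times n$ block keeps the row-wise divisibility $V_{ij}\in t^{a_i+1}\locring$ for $i>n$: the top-row action leaves these rows untouched, and $I_G$-column operations preserve this divisibility.

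Phase B clears the bottom block: set
\[
h=\begin{pmatrix}I_n & 0\\ -B & I_{m-n}\end{pmatrix},\qquad B_{ij}=V_{n+i,j}/t^{a_j},
\]
where $V$ now denotes the matrix obtained after Phase A. The entries of $B$ satisfy $B_{ij}\in t^{a_{n+i}+1-a_j}\locring$, and the hypothesis $a_k\geq a_j$ for $k>n$, $j\leq n$ forces $B_{ij}\in t\locring$; hence $h\in I_H$, and a direct computation shows $h\cdot V=V^0$. Composing with the $(I_H,I_G)$-element from Phase A yields the required pair, showing $v\in\Pi_{N,r}^\nu$. The main obstacle is the rigorous execution of Phase A, which amounts to carrying out the Iwahori–Bruhat normalization for a specific cell in $\GL_n(F)$ and checking that the elimination of off-diagonal entries can be done within the Iwahori; once this is in hand, the role of the partial-order hypothesis in the lemma is completely transparent in Phase B, where it is exactly the condition needed to place the off-diagonal bottom block of $h$ on the correct side of the Iwahori $I_H$.
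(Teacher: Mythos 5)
Your proposal is correct and follows essentially the same route as the paper: the paper likewise splits $v$ into its $U_1$- and $U_2$-components, normalizes the $U_1$-part to $v_1(e_i^*)=t^{a_{w_0(i)}}u_{w_0(i)}$ by acting with $N_G\subset I_G$ and the congruence subgroups $I_{G,0}\times I_1\subset I_G\times I_H$, and then disposes of the residual $U_2$-part. Your Phase B, with the explicit unipotent $h\in I_H$ whose lower-left block lies in $t\mathcal{O}$ precisely because $a_i\ge a_j$ for $i>n\ge j$, is actually more explicit than the paper's terse concluding remark (replacing $r$ by $1+\min\{a_{n+1},\dots,a_m\}$) and makes the role of that hypothesis transparent.
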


 \begin{proof}
 Let  $U_{1}=\locring u_{1}\oplus\dots \oplus \locring u_{n}$  and $U_{2}=\locring u_{n+1}\oplus \dots\oplus \locring u_{m}.$ Then the map $v$ can be written as a pair $(v_{1},v_{2}),$ where $v_{i}:L^{*}\longrightarrow t^{\mu_{i}}U_{i}/t^{r}U_{i}$ for $i=1,2.$ Let $N_{G}\subset B_{G}$ be unipotent radical of the standard Borel subgroup of $G$. Acting by a suitable element of $N_{G}\subset I_{G}$ on $v$, one may assume that $v_{1}(e^{*}_{i})=t^{a_{w_{0}(i)}}u_{w_{0}(i)}$ modulo $t^{\mu_{1}+\omega}U_{1}.$
\par\medskip
Furthermore consider the groups 
$$I_{1}=\{g\in{\GL(U_{1})\hspace{1mm}\vert \hspace{1mm}g=\mathrm{id} \hspace{1mm}\mathrm{mod}\hspace{1mm} t}\}$$
 and 
 $$I_{G,0}=\{g\in \GL(L)\hspace{1mm}\vert\hspace{1mm} g=\mathrm{id} \hspace{1mm}\mathrm{mod}\hspace{1mm} t\}.$$ 
 
 Acting by  a suitable element of $I_{G,0}\times I_{1}$ we may assume that $v_{1}(e^{*}_{i})=t^{a_{w_0 (i)}}u_{w_0( i)}.$ This implies that $\overline{v}_{2}$ vanishes. 
 Now viewing $v$ as a map from $L^{*}$ to $t^{-N}L,$ we observe that $r$ can be replaced by $1+\mathrm{min}\{{a_{n+1},\dots,a_{m}}\}.$ Hence $v$ is an element of $\Pi_{N,r}^{\nu}.$
 \end{proof}

\begin{lemma}
\label{M1action}
Let $\lambda=(a_{1},\dots,a_{n},0,\dots,0)$  be  a anti-dominant cocharacter in $X_{H},$  in particular all  $a_{i}$'s are nonpositive. Then we have a canonical isomorphism 
$$\overset{\leftarrow}{H}_{H}(L_{t^{\lambda !}},\mathcal{I}^{w_{0} !})\iso \mathcal{I}^{\mu !}[\langle\lambda,2\check{\rho}_{G}-2\check{\rho}_{H}\rangle+(n-m)\langle\lambda,\check{\omega}_{m}\rangle],$$
where $\mu=(w_0( \lambda),w_0)$ in $X_{G}\times S_{n,m}$ and $w_0$ is the longest element of the finite Weyl group of $M_1$. We identify $M_1$ with $G$.
\end{lemma}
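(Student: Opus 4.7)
The plan is to follow the template of Lemma~\ref{LwIw0H} and Proposition~\ref{tauw0}, with Lemma~\ref{lemme} providing the key geometric input. I set up the twisted product $\Pi_{\mcI^{w_{0}},r}^{0}\newtimes\flagvar{H}^{t^{\lambda}}$ together with the projection $\pi^{0}$ as in~(\ref{pi0}), so that by construction
$$
\heckefunc{H}(L_{t^{\lambda}!},\mcI^{w_{0}!})\iso \pi^{0}_{!}(\qelbar\bt L_{t^{\lambda}!}),
$$
and $\qelbar\bt L_{t^{\lambda}!}$ restricts to the constant perverse sheaf on the smooth open total space.

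The main geometric step is to apply Lemma~\ref{lemme} with $\mu_{1}=\lambda$ and $\mu_{2}=0$. Its hypothesis $a_{i}\ge a_{j}$ for $m\ge i>n\ge j\ge 1$ reduces to $0\ge a_{j}$, which holds since $\lambda$ is anti-dominant in $X_{H}$. It then follows that the image of $\pi^{0}$ is contained in the orbit $\Pi_{N,r}^{\mu}$ with $\mu=(w_{0}(\lambda),w_{0})\in X_{G}\times S_{n,m}$, and a direct construction of a preimage for an arbitrary point $v\in\Pi_{N,r}^{\mu}$ gives surjectivity.

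For the fibre analysis, I fix $v\in\Pi_{N,r}^{\mu}$; the fibre of $\pi^{0}$ over $v$ parametrizes the complete flags $U'_{1}\subset\dots\subset U'_{m}$ inside the appropriate lattice, in relative position $t^{\lambda}$ with respect to the standard flag, compatible with the image of $v$ through the isomorphism condition on the induced map $\bar v$. The same inductive $B$-equivariant affine fibration argument used in Lemma~\ref{affinefibres} and in the proof of Proposition~\ref{tauw0}, extended from reflections to the translation element $t^{\lambda}$, shows that this variety is an affine space of constant dimension. The cohomological shift in the resulting isomorphism $\pi^{0}_{!}(\qelbar\bt L_{t^{\lambda}!})\iso \mcI^{\mu!}[\,\cdot\,]$ is then read off by comparing dimensions of source, target, and fibre.

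The main obstacle is this last dimension bookkeeping, which mirrors the calculation in the proof of Lemma~\ref{LwIw0H}. Using $\ell(t^{\lambda})=-\langle\lambda,2\check\rho_{H}\rangle$ (valid because $\lambda$ is anti-dominant), together with the analogous length formula on the $G$-side controlling $\dim\Pi_{N,r}^{\mu}$ via the classification of $I_{H}\times I_{G}$-orbits, the remaining combinatorics should collapse precisely to $\langle\lambda,2\check\rho_{G}-2\check\rho_{H}\rangle+(n-m)\langle\lambda,\check\omega_{m}\rangle$. The contribution $(n-m)\langle\lambda,\check\omega_{m}\rangle$ is expected, as already in Lemma~\ref{LwIw0H}, to arise from the mismatch between the embedded subgroup $M_{1}\simeq\GL_{n}$ and the ambient $H=\GL_{m}$: the $m-n$ extra coordinates of $U'/t^{r}U$ carry no content on the $G$-side but contribute to the dimension count on the $H$-side, which is exactly where the discrepancy between $2\check\rho_G$ and $2\check\rho_H$ gets absorbed.
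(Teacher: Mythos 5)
Your proposal follows essentially the same route as the paper: apply Lemma~\ref{lemme} (with $\mu_{1}=\lambda$, $\mu_{2}=0$) to identify the image of $\pi^{0}$ with the orbit $\Pi_{N,r}^{\mu}$, $\mu=(w_{0}(\lambda),w_{0})$, observe that the fibres of the resulting map $\pi^{0}_{H}$ are affine spaces, and obtain the shift by comparing $\dim\bigl(\Pi_{\mathcal{I}^{w_{0}},r}^{0}\newtimes\flagvar{H}^{t^{\lambda}}\bigr)$ with $\dim\Pi_{N,r}^{\mu}$, the latter computed from the $G$-side isomorphism $\Pi_{\mathcal{I}^{w_{0}},r}^{0}\newtimes\flagvar{G}^{t^{w_{0}(\lambda)}}\iso\Pi_{N,r}^{\mu}$ exactly as in the paper. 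The only difference is that you leave the final dimension bookkeeping indicated rather than carried out (the paper computes both dimensions explicitly), but all the ingredients you list are the ones the paper uses, so this is routine.
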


\begin{proof}
Consider the map  
$$\pi^{0}:\Pi_{\mathcal{I}^{w_{0}},r}^{0}\newtimes \mathcal{F}\ell_{H}^{w}\longrightarrow \Pi_{N,r}$$ 
defined in $\eqref{mappi0}.$ By applying  Lemma $\ref{lemme}$ to $w=t^{\lambda},$ we see that the image of $\pi^{0}$ is the $I_{H}\times I_{G}$-orbit on $\Pi_{N,r}$ passing through the map $v$ from $L^{*}$ to  $t^{-N}U/t^{r}U$ given by $v(e_{i}^{*})=t^{a_{w_{0}(i)}}u_{w_{0}(i)}$ for $1\leq i \leq n.$ This orbit corresponds to element $\mu=(w_{0}(\lambda), w_0)$ in $X_{G}\times S_{n,m}.$ 
Restricting $\pi^{0}$ to its image we get a morphism
\begin{equation}
\label{newactionH}
\pi^{0}_{H}:\Pi_{\mathcal{I}^{w_{0}},r}^{0}\newtimes \mathcal{F}\ell_{H}^{t^{\lambda}}\longrightarrow \Pi_{N,r}^{\mu}
\end{equation}  

whose fibres are affine spaces. One has $\dim( \mathcal{F}\ell_{H}^{t^{\lambda}})=\langle \lambda,2\check{\rho}_{H}\rangle.$ For any point $hI_{H}$  in $\mathcal{F}\ell_{H}^{t^{\lambda}},$  let $U^{'}=hU.$ Then  
$$ \dim (\mathrm{Hom}_{\locring}(L^{*},t^{\lambda+\omega}U/t^{r}U))=nm(r-1)-n\langle\lambda,\check{\omega}_{m}\rangle.$$
Thus the affine space of maps from $L^{*}$ to $hU/t^{r}U$ sending $\Vect(e^{*}_{n},\dots,e_{n-i}^{*})$ to $U_{i+1}^{'}$ for $i=1,\dots ,n-1$ is of dimension $\frac{n^{2}+n}{2}+nm(r-1)-n\langle \lambda,\check{\omega}_{m}\rangle.$ Finally 
$$\dim(\Pi_{\mcI^{w_{0}},r}^{0}\newtimes \flagvar{H}^{t^{\lambda}})=\langle\lambda, 2\check{\rho}_{H} \rangle+\frac{n^{2}+n}{2}+nm(r-1)-n\langle\lambda ,\check{\omega}_{m}\rangle.$$
Moreover, we have the following isomorphism 
$$\pi^{0}_{G}:\Pi_{\mathcal{I}^{w_{0}},r}^{0}\newtimes \flagvar{G}^{t^{w_{0}(\lambda})}\iso \Pi_{N,r}^{\mu}.$$

By using  $\dim (\mathcal{F}\ell_{G}^{t^{w_{0}\lambda}})=\langle\lambda,2\check{\rho}_{G}\rangle,$ we get that the dimension of $\Pi_{N,r}^{\mu}$ equals 
$$\frac{n^{2}+n}{2}+nm(r-1)-m\langle\lambda,\check{\omega}_{m}\rangle+\langle\lambda,2\check{\rho}_{G}\rangle$$
and hence the dimension of the fibres of the map $\eqref {newactionH}$ equals
 $$\langle \lambda,2(\check{\rho}_{G}-\check{\rho}_{H})\rangle+(m-n)\langle\lambda,\check{\omega}_{m}\rangle$$ 
 which allows us to calculate the announced shift in the Lemma. Additionally this proves that $\overset{\leftarrow}{H}_{H}(L_{t^{\lambda !}},\mathcal{I}^{w_{0} !})$ lies in $\mathcal{S}_{0}.$
\end{proof}
Remind the following two properties  due to \cite{Arkhipov},
 \begin{enumerate}
 \label{equaarkhi}
 \item
 If $w_{1},w_{2}\in{\weyl{G}}$ verify $\ell(w_{1}w_{2})=\ell(w_{1})+\ell(w_{2})$ then we have a canonical isomorphism
\begin{equation}
\label{trio}
L_{w_{1}*}\star L_{w_{2}*}\iso L_{w_{1}w_{2}*}.
\end{equation}
Under the same assumption, and by duality the same result is true for $L_{w!}.$
\item Denote by $e$ the identity element of $\weyl{G}$ then for any $w\in{\weyl{G}},$ we have 
\begin{equation}
\label{e}
L_{w!}\star L_{w^ {-1}*}\iso L_{w ^ {-1}*}\star L_{w!}\iso L_{e}.
\end{equation}  
Hence the perverse sheaf $L_{w!}$ is an invertible object of $D_{I_{G}}(\flagvar{G}).$
 \end{enumerate}
\begin{proposition}
\label{13}
Let $\lambda=(a_{1},\dots,a_{n},0,\dots,0)$ be an anti dominant cocharacter in $X_{H},$ in particular all $a_{i}$'s are non-positive.  Then 
$\heckefunc{H}(L_{t^{-\lambda *}},\mathcal{I}^{w_{0} !})$ occurs in $\mathcal{S}_{0}.$
\end{proposition}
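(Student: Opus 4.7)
The plan is to reduce the statement to the $G$-side Hecke action on $\mcI^{w_{0}!}$, leveraging the fact that $\mathcal{S}_{0}$ is a free $\iwahorihecke{G}$-module of rank one generated by $[\mcI^{w_{0}!}]$ (an immediate consequence of Lemma \ref{LwIw0G}). The four key inputs are the inverse relation \eqref{e} between standards and costandards, Lemma \ref{M1action} (the $H$-side computation), Lemma \ref{LwIw0G} (the $G$-side computation), and the commutativity of $\heckefunc{G}$ and $\heckefunc{H}$.

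First, applying \eqref{e} in $D_{I_H}(\flagvar{H})$ with $w = t^{\lambda}$ gives $L_{t^{\lambda}!} \star L_{t^{-\lambda*}} \iso L_{t^{-\lambda*}} \star L_{t^{\lambda}!} \iso L_e$, so the functors $\heckefunc{H}(L_{t^{\lambda}!}, -)$ and $\heckefunc{H}(L_{t^{-\lambda*}}, -)$ are mutually inverse self-equivalences of $D_{I_H \times I_G}(\Pi(F))$. Lemma \ref{M1action} then provides
$$\heckefunc{H}(L_{t^{\lambda}!}, \mcI^{w_{0}!}) \iso \mcI^{\mu!}[s],$$
where $\mu = (w_{0}(\lambda), w_{0}) \in X_G \times S_{n,m}$ (viewing $w_0(\lambda)$ as a dominant coweight of $G$) and $s$ is the shift stated there. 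Applying the inverse functor gives $\heckefunc{H}(L_{t^{-\lambda*}}, \mcI^{\mu!}) \iso \mcI^{w_{0}!}[-s]$. On the $G$-side, Lemma \ref{LwIw0G} applied to $w = t^{w_0(\lambda)} \in \weyl{G}$ gives $\heckefunc{G}(L_{t^{w_{0}(\lambda)}!}, \mcI^{w_{0}!}) \iso \mcI^{\mu!}$ with the same $\mu$, and inverting via \eqref{e} applied in $D_{I_G}(\flagvar{G})$ yields $\mcI^{w_{0}!} \iso \heckefunc{G}(L_{t^{-w_{0}(\lambda)*}}, \mcI^{\mu!})$.

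Combining these two expressions and exploiting the commutativity of the $G$- and $H$-Hecke functors on $D_{I_H \times I_G}(\Pi(F))$ (a structural feature of the bimodule, cf.\ Appendix \ref{Appendix}):
\begin{align*}
\heckefunc{H}(L_{t^{-\lambda*}}, \mcI^{w_{0}!})
  & \iso \heckefunc{H}\bigl(L_{t^{-\lambda*}}, \heckefunc{G}(L_{t^{-w_{0}(\lambda)*}}, \mcI^{\mu!})\bigr) \\
  & \iso \heckefunc{G}\bigl(L_{t^{-w_{0}(\lambda)*}}, \heckefunc{H}(L_{t^{-\lambda*}}, \mcI^{\mu!})\bigr) \\
  & \iso \heckefunc{G}(L_{t^{-w_{0}(\lambda)*}}, \mcI^{w_{0}!})[-s].
\end{align*}

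To conclude: since $[L_{t^{-w_{0}(\lambda)*}}]$ lies in $K(P_{I_G}(\flagvar{G}))\otimes\qelbar \iso \iwahorihecke{G}$ and $\mathcal{S}_{0}$ is stable under the $\iwahorihecke{G}$-action (being free of rank one generated by $[\mcI^{w_{0}!}]$), the class $[\heckefunc{G}(L_{t^{-w_{0}(\lambda)*}}, \mcI^{w_{0}!})]$ belongs to $\mathcal{S}_{0}$. The shift $[-s]$ alters this class only by the sign $(-1)^s$, so the class of $\heckefunc{H}(L_{t^{-\lambda*}}, \mcI^{w_{0}!})$ lies in $\mathcal{S}_{0}$ as desired. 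The main obstacle in this approach is to justify rigorously the commutativity of $\heckefunc{G}$ and $\heckefunc{H}$ in the displayed computation; this is expected from the construction of the Hecke functors in Appendix \ref{Appendix} from the two commuting actions of $G(\locring)$ and $H(\locring)$ on $\Pi(F)$, but its precise formulation requires unpacking the twisted product diagrams on both sides.
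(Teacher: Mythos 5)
Your argument is correct and is essentially the paper's own proof: both combine Lemma \ref{M1action}, the invertibility isomorphism \eqref{e}, Lemma \ref{LwIw0G} and the commutation of $\heckefunc{G}$ with $\heckefunc{H}$ to identify $\heckefunc{H}(L_{t^{-\lambda*}},\mcI^{w_{0}!})$ with $\heckefunc{G}(L_{t^{-w_{0}(\lambda)*}},\mcI^{w_{0}!})$ up to a shift, and then conclude from the left $\iwahorihecke{G}$-module structure of $\mathcal{S}_{0}$; the only difference is the order in which the two inversions are performed, which is immaterial. The commutativity of the two Hecke actions that you flag as the remaining point is likewise invoked without further justification in the paper, as a structural property of the construction in Appendix \ref{Appendix}.
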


\begin{proof}
According to Lemma $\ref{M1action},$ we have 
$$\heckefunc{H}(L_{t^{\lambda !}},\mcI^{w_{0}})\iso \mcI^{\mu!}[d],$$
where   the shift $d$ equals $[\langle\lambda,2\check{\rho}_{G}-2\check{\rho}_{H}\rangle+(n-m)\langle\lambda,\check{\omega}_{m}\rangle]$. Moreover according to $\eqref{e}$, $L_{t^{-\lambda}*}\star L_{t^{\lambda}!}$ is isomorphic to $L_{e}$ where $e$ is the identity element in the finite Weyl group  of $M_{1}$. Combining these two isomorphisms we obtain 
\begin{align}
\label{equa11}
\mcI^{w_{0}!} \iso & \heckefunc{H}(L_{t^{-\lambda*}}\star L_{t^{\lambda!}},\mcI^{w_{0}!}) \iso  \heckefunc{H}(L_{t^{-\lambda*}},\mathcal{I}^{\mu!})[d]\nonumber\\
\iso & \heckefunc{H}(L_{t^{-\lambda *}},\heckefunc{G}(L_{t^{w_{0}(\lambda )!}},\mathcal{I}^{w_{0}!}))[d]\nonumber\\
 \iso & \heckefunc{G}(L_{t^{w_{0}(\lambda )!}},\heckefunc{H}(L_{t^{-\lambda*}},\mcI^{w_{0}}))[d], 
\end{align}
where the third isomorphism is due to Lemma $\ref{LwIw0G}$ and the last one is due the fact that the actions of $H$ and $G$ commute. 
Applying $\heckefunc{G}(L_{t^{-w_{0}(\lambda )*}},.)$ to both sides of $\eqref{equa11}$, we obtain
\begin{equation}
\label{boos}
\heckefunc{G}(L_{t^{-w_{0}(\lambda)*}},\mathcal{I}^{w_{0}!})\iso \heckefunc{H}(L_{t^{-\lambda*}},\mathcal{I}^{w_{0}})[d].
\end{equation}
Since $\mathcal{S}_{0}$ is a left $\iwahorihecke{G}$-module, the left hand side of $\eqref{boos}$ lies in $\mathcal{S}_{0}$. Thus so does the right hand side.  
\end{proof}

\begin{lemma}
\label{10}
Let $\lambda=(0,\dots,0,a_{n+1},\dots,a_{m})$ be a dominant cocharacter of $M.$ If $a_{n+1}\geq \dots \geq a_{m}\geq 0$ then
$$\heckefunc{H}(L_{t^{-\lambda}*},\mcI^{w_{0}!})\iso \mcI^{w_{0}!}[\langle\lambda, 2\check{\rho}_{H} \rangle-n\langle \lambda, \check{\omega}_{m}\rangle].$$
\end{lemma}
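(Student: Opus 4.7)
The approach is to mimic the argument of Proposition $\ref{13}$, reducing the statement for the costandard sheaf $L_{t^{-\lambda}*}$ to the already-established statement for the standard sheaf $L_{t^\lambda !}$, which is handled by Lemma $\ref{LwIw0H}$. The key tool is the invertibility relation $L_{t^{-\lambda}*}\star L_{t^\lambda !}\iso L_e$ provided by equation $\eqref{e}$.

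First I would observe that since the first $n$ coordinates of $\lambda=(0,\dots,0,a_{n+1},\dots,a_m)$ vanish and $a_{n+1},\dots,a_m\geq 0$, the hypothesis of Lemma $\ref{LwIw0H}$ is satisfied for $w=t^\lambda$ (with trivial permutation factor $\tau=e$). That lemma yields
$$\heckefunc{H}(L_{t^\lambda !},\mcI^{w_0 !})\iso \mcI^{w_0 !}\bigl[n\langle\lambda,\check\omega_m\rangle-\ell(t^\lambda)\bigr].$$
Then I would apply $\heckefunc{H}(L_{t^{-\lambda}*},-)$ to both sides and use the associativity compatibility $\heckefunc{H}(A\star B,\mathcal{K})\iso \heckefunc{H}(A,\heckefunc{H}(B,\mathcal{K}))$ (exactly as in the proof of Proposition $\ref{13}$), combined with $L_{t^{-\lambda}*}\star L_{t^\lambda !}\iso L_e$. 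The left-hand side collapses to $\heckefunc{H}(L_e,\mcI^{w_0 !})\iso \mcI^{w_0 !}$, giving
$$\mcI^{w_0 !}\iso \heckefunc{H}(L_{t^{-\lambda}*},\mcI^{w_0 !})\bigl[n\langle\lambda,\check\omega_m\rangle-\ell(t^\lambda)\bigr],$$
and rearranging shifts produces $\heckefunc{H}(L_{t^{-\lambda}*},\mcI^{w_0 !})\iso \mcI^{w_0 !}[\ell(t^\lambda)-n\langle\lambda,\check\omega_m\rangle]$.

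To finish I would identify $\ell(t^\lambda)$ with $\langle\lambda,2\check\rho_H\rangle$ in the convention used throughout the paper (the same identification underlying the dimension count $\dim\flagvar{H}^{t^\lambda}=\langle\lambda,2\check\rho_H\rangle$ that appears in the proof of Lemma $\ref{M1action}$). This is a routine length computation for a translation element in $\wt W_H$ associated with the Levi $M$, and substituting it yields the announced shift $\langle\lambda,2\check\rho_H\rangle-n\langle\lambda,\check\omega_m\rangle$. The argument is essentially formal once Lemma $\ref{LwIw0H}$ and equation $\eqref{e}$ are in hand; the only point requiring genuine care is the cohomological shift bookkeeping, and I do not expect any substantive obstacle beyond that.
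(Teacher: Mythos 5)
Your proposal follows essentially the same route as the paper: the paper's proof also applies Lemma $\ref{LwIw0H}$ to $w=t^{\lambda}$ with trivial $\tau$, writes the resulting shift as $[n\langle\lambda,\check{\omega}_m\rangle-\langle\lambda,2\check{\rho}_H\rangle]$ (i.e.\ makes the same identification of $\ell(t^{\lambda})$ with $\langle\lambda,2\check{\rho}_H\rangle$ that you invoke), and then transfers the statement from $L_{t^{\lambda}!}$ to $L_{t^{-\lambda}*}$ via the invertibility relation $\eqref{e}$ exactly as you do, mirroring Proposition $\ref{13}$. So the argument is correct relative to the paper's conventions and is not a different proof.
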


\begin{proof}
 Lemma $\ref{LwIw0H}$ applied to $w=t^{\lambda}$ ($\tau$ being the identity) gives us 
$$\heckefunc{H}(L_{t^{\lambda!}},\mcI^{w_{0}})\iso \mcI^{w_{0}}[n\langle \lambda, \check{\omega}_{m}\rangle-\langle \lambda 2\check{\rho}_{H}\rangle].$$ This implies the assertion for $L_{t^{-\lambda}*}.$
\end{proof}

\begin{proposition}
\label{shift}
Let $\lambda$ be a dominant cocharacter in $X_{H}$  that can be written as the sum of two cocharacters $\lambda_{1}$ and $\lambda_{2}$ in $X_{M_{1}}$ and $X_{M_{2}}$ respectively.  If $\nu=(-w_{0}(\lambda_{1}),w_{0})$ then 
$$\heckefuncr{H}(L_{t^{\lambda !}},\mathcal{I}^{w_{0}!})\iso \heckefunc{H}(L_{t^{-\lambda !}},\mathcal{I}^{w_{0} !})\iso \mathcal{I}^{\nu !}[\langle\lambda_{1},2\check{\rho}_{G}\rangle-\langle\lambda,2\check{\rho}_{H}\rangle+\langle m\lambda_{1}-n\lambda,\check{\omega}_{m}\rangle],$$
where we identify $M_{1}$ with $G$ and hence  $\check{\rho}_{M_{1}}$ with $\check{\rho}_{G}.$ Thus 
$\heckefuncr{H}(L_{t^{\lambda !}},\mathcal{I}^{w_{0}!})$ occurs in $\mathcal{S}_{0}.$
\end{proposition}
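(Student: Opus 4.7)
My plan is to follow the strategy of Lemma~\ref{M1action} verbatim and extend the dimension count to the more general decomposition $\lambda=\lambda_{1}+\lambda_{2}$. First I would use Definition~\ref{rla} together with the identification $\star^{\sharp}(L_{t^{\lambda !}})\iso L_{t^{-\lambda !}}$ to replace $\heckefuncr{H}(L_{t^{\lambda !}},\mcI^{w_{0}!})$ by $\heckefunc{H}(L_{t^{-\lambda !}},\mcI^{w_{0}!})$. Since $\lambda$ is dominant in $X_{H}$, $-\lambda$ is antidominant, and the construction of $\heckefunc{H}$ recalled in Appendix~\ref{Appendix} provides, exactly as in the proof of Lemma~\ref{M1action}, the convolution diagram
$$
\pi^{0}_{H}\colon \Pi^{0}_{\mcI^{w_{0}},r}\newtimes \flagvar{H}^{t^{-\lambda}}\longrightarrow \Pi_{N,r},
$$
whose source classifies pairs $(v,hI_{H})$, with $U'=hU$, such that the reduced map $\overline{v}\colon L^{*}/tL^{*}\to U'/tU'$ sends the standard partial flag in $L^{*}$ isomorphically onto the first $n$ steps of the flag on $U'$.

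Next I would identify the image of $\pi^{0}_{H}$ with the orbit $\Pi^{\nu}_{N,r}$. This is precisely what Lemma~\ref{lemme} delivers, applied with its input $\mu:=-\lambda$, $\mu_{i}=-\lambda_{i}$: its hypothesis $a_{i}\geq a_{j}$ for $m\geq i>n\geq j\geq 1$ becomes $-a_{i}\geq -a_{j}$, which follows from the dominance of $\lambda$ in $X_{H}$, and the orbit element $(w_{0}(\mu_{1}),w_{0})$ output by the lemma is exactly $\nu=(-w_{0}(\lambda_{1}),w_{0})$. The argument at the end of the proof of Lemma~\ref{M1action} then shows that $\pi^{0}_{H}$ is a proper surjection onto $\Pi^{\nu}_{N,r}$ with affine fibres of constant dimension, so $\heckefunc{H}(L_{t^{-\lambda !}},\mcI^{w_{0}!})$ is $\mcI^{\nu !}$ up to a shift equal (in the sign convention of Lemma~\ref{M1action}) to the fibre dimension. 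To pin down the shift I would compare $\dim(\Pi^{0}_{\mcI^{w_{0}},r}\newtimes \flagvar{H}^{t^{-\lambda}})$ with $\dim\Pi^{\nu}_{N,r}$, and compute the latter via the $G$-side isomorphism $\pi^{0}_{G}\colon \Pi^{0}_{\mcI^{w_{0}},r}\newtimes \flagvar{G}^{t^{-w_{0}(\lambda_{1})}}\iso \Pi^{\nu}_{N,r}$; this isomorphism is supplied by Proposition~\ref{grothen4} (equivalently Lemma~\ref{LwIw0G}) applied to the pure translation $t^{-w_{0}(\lambda_{1})}\in\weyl{G}$, because $t^{-w_{0}(\lambda_{1})}\centerdot w_{0}=\nu$. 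Substituting the Bruhat-cell dimensions $\dim\flagvar{H}^{t^{-\lambda}}=\langle\lambda,2\check\rho_{H}\rangle$ and $\dim\flagvar{G}^{t^{-w_{0}(\lambda_{1})}}=\langle\lambda_{1},2\check\rho_{G}\rangle$ (using $w_{0}\check\rho_{G}=-\check\rho_{G}$), together with the two $\Hom_{\locring}$-contributions, the common $\frac{n(n+1)}{2}+nm(r-1)$ terms cancel and the remainder collapses to $\langle\lambda_{1},2\check\rho_{G}\rangle-\langle\lambda,2\check\rho_{H}\rangle+\langle m\lambda_{1}-n\lambda,\check\omega_{m}\rangle$, which is the announced shift.

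The last assertion is then automatic: $\nu=t^{-w_{0}(\lambda_{1})}\centerdot w_{0}$ with $t^{-w_{0}(\lambda_{1})}\in\weyl{G}$, so $\mcI^{\nu !}$ is one of the generators of $\mathcal{S}_{0}$ by the very definition of $\mathcal{S}_{0}$. The main obstacle I anticipate is the careful sign bookkeeping between the antidominance of $-\lambda$ in $X_{H}$ and the dominance of $-w_{0}(\lambda_{1})$ in $X_{G}$, since the $\check\rho$-conventions used in Lemma~\ref{M1action} must be tracked through unchanged to obtain matching signs in the final shift; one must also check that Lemma~\ref{lemme} applies here even though $-\lambda_{1}$ and $-\lambda_{2}$ individually need not be antidominant in all of $X_{H}$, its hypothesis being only the relative inequality between the first $n$ and the last $m-n$ coordinates, which the dominance of $\lambda$ in $X_{H}$ supplies.
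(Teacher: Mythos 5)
Your proposal is correct and follows essentially the same route as the paper's own proof: the convolution diagram for $\heckefunc{H}(L_{t^{-\lambda}!},\mcI^{w_{0}!})$, Lemma \ref{lemme} to identify the image of $\pi^{0}_{H}$ with $\Pi_{N,r}^{\nu}$, the $G$-side isomorphism $\Pi_{\mathcal{I}^{w_{0}},r}^{0}\newtimes\flagvar{G}^{t^{-w_{0}(\lambda_{1})}}\iso\Pi_{N,r}^{\nu}$ to compute $\dim\Pi_{N,r}^{\nu}$, and the dimension comparison to extract the shift, with membership in $\mathcal{S}_{0}$ following from $\nu=t^{-w_{0}(\lambda_{1})}\centerdot w_{0}$. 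The only caveat is the sign bookkeeping for the shift, which you yourself flag and which is handled in the paper exactly as you describe.
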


\begin{proof}
Set $- \lambda=(a_{1},\dots ,a_{m})$ and choose two integers $r,N$ such that $-N\leq a_{i}< r
$ for all $i.$ By Lemma $\ref{lemme}$ the map 
$$\pi^{0}:\Pi_{\mathcal{I}^{w_{0}},r}^{0}\newtimes \mathcal{F}\ell_{H}^{t^{-\lambda}}\longrightarrow \Pi_{N,r}$$
 factors through $\Pi_{N,r}^{\nu}$ by a map $\pi^{0}_{H},$ where $\nu$ equals $(-w_{0}(\lambda_{1}),w_{0}).$ The dimension of $\Pi_{\mathcal{I}^{w_{0}},r}^{0}\newtimes \mathcal{F}\ell_{H}^{t^{-\lambda}}$ equals $nm(r-1)+n\langle\lambda,\check{\omega}_{m}\rangle+\frac{n^{2}+n}{2}+\langle\lambda,2\check{\rho}_{H}\rangle.$ We have the isomorphism 
 
 $$\Pi_{\mathcal{I}^{w_{0}},r}^{0}\newtimes \mathcal{F}\ell_{H}^{t^{-w_{0}(\lambda_{1})}}\iso \Pi_{N,r}^{\nu}$$

and this allows us to calculate the dimension of $I_{H}\times I_{G}$-orbit $\Pi_{N,r}^{\nu}.$ Namely 
$$\dim (\mathcal{F}\ell_{G}^{t^{-w_{0}(\lambda_{1})}})=\langle -w_{0}(\lambda_{1}),2\check{\rho}_{G}\rangle=\langle\lambda_{1},2\check{\rho}_{G}\rangle.$$ 
Remind that $w_{0}$ is longest element of the finite Weyl group of $M_{1}\iso G.$ This yields

$$\dim(\Pi_{n,r}^{\nu})=nm(r-1)+m\langle\lambda_{1},\check{\omega}_{m}\rangle+\frac{n^{2}+n}{2}+\langle\lambda_{1},2\check{\rho}_{G}\rangle.$$ 

So the dimension of a fibre of the map  $\pi^{0}_{H}$ is $\langle \lambda,2\check{\rho}_{H}\rangle-\langle\lambda_{1},2\check{\rho}_{G}\rangle+\langle n\lambda-m\lambda_{1},\check{\omega}_{m}\rangle.$ This justifies the shift in the formula announced above and the assertion follows.  
\end{proof}
\begin{remark}
\label{remark2}
 In Lemma $\ref{shift}$ if  $\lambda_{2}$ equals $0$  then the corresponding map $\pi^{0}_{H}$ is an isomorphism and the shift in the above formula disappears. 
\end{remark}

\begin{proposition}
\label{15}
For any $\lambda$ in $X_{H}^{+}$, the complex $\heckefunc{H}(L_{t^{\lambda}*},\mathcal{I}^{w_{0}})[d]$ occurs in $\mathcal{S}_{0}.$  Besides   for a dominant cocharacter $\mu$ in $X_{H}^{+}$, the complex $\heckefunc{H}(L_{t^{-\mu}!}\star L_{t^{\lambda}*},\mathcal{I}^{w_{0}})$ occurs in $\mathcal{S}_{0}$ as well.  
\end{proposition}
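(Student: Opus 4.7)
The plan is to follow the strategy used in the proof of Proposition~\ref{13}, namely to transfer the action of $\heckefunc{H}$ by the costandard sheaf $L_{t^{\lambda}*}$ on $\mathcal{I}^{w_{0}!}$ into a $\heckefunc{G}$-action, which automatically preserves $\mathcal{S}_{0}$ since $\mathcal{S}_{0}$ is a left $\iwahorihecke{G}$-module. The three key ingredients are Proposition~\ref{shift}, the invertibility identity $L_{t^{-\lambda}!}\star L_{t^{\lambda}*}\iso L_{e}$ furnished by \eqref{e}, and the commutativity of the two Hecke functors $\heckefunc{G}$ and $\heckefunc{H}$ on $D_{I_{H}\times I_{G}}(\Pi(F))$.

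Given $\lambda\in X_{H}^{+}$, I decompose $\lambda=\lambda_{1}+\lambda_{2}$ with $\lambda_{i}\in X_{M_{i}}$. Combining Proposition~\ref{shift} with Proposition~\ref{Lw!Iw0} applied to $w=t^{-w_{0}(\lambda_{1})}\in\weyl{G}$, the action $t^{-w_{0}(\lambda_{1})}\centerdot w_{0}=(-w_{0}(\lambda_{1}),w_{0})$ identifies the target IC-sheaf, and one arrives at
\begin{equation*}
\heckefunc{H}(L_{t^{-\lambda}!},\mathcal{I}^{w_{0}!})\iso \heckefunc{G}(L_{t^{-w_{0}(\lambda_{1})}!},\mathcal{I}^{w_{0}!})[d'],
\end{equation*}
with explicit shift $d'=\langle\lambda_{1},2\check{\rho}_{G}\rangle-\langle\lambda,2\check{\rho}_{H}\rangle+\langle m\lambda_{1}-n\lambda,\check{\omega}_{m}\rangle$. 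Next I apply $\heckefunc{H}(L_{t^{\lambda}*},-)$ to both sides: the left-hand side collapses to $\mathcal{I}^{w_{0}!}$ via $L_{t^{\lambda}*}\star L_{t^{-\lambda}!}\iso L_{e}$, while on the right I commute $\heckefunc{H}$ past $\heckefunc{G}$ and then apply $\heckefunc{G}(L_{t^{w_{0}(\lambda_{1})}*},-)$, using $L_{t^{w_{0}(\lambda_{1})}*}\star L_{t^{-w_{0}(\lambda_{1})}!}\iso L_{e}$, to deduce
\begin{equation*}
\heckefunc{G}(L_{t^{w_{0}(\lambda_{1})}*},\mathcal{I}^{w_{0}!})\iso \heckefunc{H}(L_{t^{\lambda}*},\mathcal{I}^{w_{0}!})[d'].
\end{equation*}
The left-hand side lies in $\mathcal{S}_{0}$ because $\mathcal{S}_{0}$ is preserved by the left $\iwahorihecke{G}$-action, which establishes the first assertion with $d=d'$.

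For the second assertion, I rewrite
\begin{equation*}
\heckefunc{H}(L_{t^{-\mu}!}\star L_{t^{\lambda}*},\mathcal{I}^{w_{0}!})\iso \heckefunc{H}(L_{t^{-\mu}!},\heckefunc{H}(L_{t^{\lambda}*},\mathcal{I}^{w_{0}!})),
\end{equation*}
substitute the identity produced in the first part to convert the inner $\heckefunc{H}$-action into an $\heckefunc{G}$-action up to shift, commute the two Hecke functors once more, and invoke Proposition~\ref{shift} to place $\heckefunc{H}(L_{t^{-\mu}!},\mathcal{I}^{w_{0}!})$ in $\mathcal{S}_{0}$. The resulting expression is $\heckefunc{G}(L_{t^{w_{0}(\lambda_{1})}*},-)$ applied to an element of $\mathcal{S}_{0}$, and is therefore in $\mathcal{S}_{0}$.

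The main obstacle I anticipate is the careful bookkeeping of cohomological shifts introduced at each application of Proposition~\ref{shift} and of the invertibility~\eqref{e}; no genuinely new geometric input beyond the material already developed appears necessary, but the sign conventions surrounding $\star^{\sharp}$ and the action $w\centerdot w_{0}$ must be tracked meticulously in order to pin down the precise value of $d$.
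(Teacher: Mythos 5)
Your argument is correct and essentially reproduces the paper's own proof: both rely on Proposition \ref{shift}, the invertibility isomorphism \eqref{e}, the commutativity of $\heckefunc{G}$ and $\heckefunc{H}$, and the identification of $\mathcal{I}^{\nu!}$ as $\heckefunc{G}(L_{t^{-w_0(\lambda_1)}!},\mathcal{I}^{w_0!})$ in order to trade the $\heckefunc{H}$-action of $L_{t^{\lambda}*}$ for a $\heckefunc{G}$-action, which preserves $\mathcal{S}_0$. The only cosmetic point is that the identification you use is Lemma \ref{LwIw0G} (valid for $n\leq m$) rather than Proposition \ref{Lw!Iw0} (stated for $n=m$); your use of $\lambda_1$ in place of the paper's $\lambda$ is in fact the more precise formulation.
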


\begin{proof}
The equality $\eqref{e}$ combining this with Proposition $\ref{shift},$ we get :
\begin{align}
\mathcal{I}^{w_{0}!} & \iso \heckefunc{H}(L_{t^{\lambda}*}\star L_{t^{-\lambda}!},\mathcal{I}^{w_{0}!})\iso \heckefunc{H}(L_{t^{\lambda}*},\mathcal{I}^{\nu})[d] \nonumber\\
& \iso \heckefunc{H}(L_{t^{\lambda}*},\heckefunc{G}(L_{t^{-w_{0}(\lambda)!}},\mathcal{I}^{w_{0}!}))[d] \nonumber\\
& \iso \heckefunc{G}(L_{t^{-w_{0}\lambda !}},\heckefunc{H}(L_{t^{\lambda}*},\mathcal{I}^{w_{0}!}))[d],.
\end{align}

The shift $d$ is also the one defined in Proposition $\ref{shift}$. The third isomorphism is  due to  Lemma $\ref{LwIw0G}$ and the fourth holds by using the commutativity of the action of $G$ and $H$. 
 Applying $\heckefunc{G}(L_{t^{w_{0}(\lambda})*},.)$ to both sides, we get
$$\heckefunc{G}(L_{t^{w_{0}(\lambda)*}},\mathcal{I}^{w_{0}!})\iso \overset{\leftarrow}{H}_{H}(L_{t^{\lambda}*},\mathcal{I}^{w_{0}!})[d].$$ 
This complex occurs in $\mathcal{S}_{0}.$
Now consider  the isomorphism 
\begin{align}
\heckefunc{H}(L_{t^{-\mu}!}\star L_{t^{\lambda }*},\mathcal{I}^{w_{0}!}) & \iso \overset{\leftarrow}{H}_{H}(L_{t^{-\mu}!},  \heckefunc{G}(L_{t^{w_{0}(\lambda})*},\mathcal{I}^{w_{0}}))[-d]\nonumber \\
& \iso \heckefunc{G} (L_{t^{w_{0}(\lambda)}*},\overset{\leftarrow}{H}_{H}(L_{t^{-\mu }!},\mathcal{I}^{w_{0}}))[-d].
\end{align}
By Lemma $\ref{shift}$, the complex $\heckefunc{H}(L_{t^{-\mu!}},\mcI^{w_{0}!})$ occurs in $\mathcal{S}_{0}$. Since $\mathcal{S}_{0}$ is a $\iwahorihecke{G}$-module we can  apply the functor $\heckefunc{G}(L_{t^{w_{0}(\lambda)}*},)$ to $\heckefunc{H}(L_{t^{-\mu!}},\mcI^{w_{0}!}).$ Then the result will still occur in $\mathcal{S}_{0}.$
\end{proof}

\textbf{proof of Theorem} $\mathbf{\ref{S0}:}$
The assertion follows from $\eqref{simplereflection}$, Lemmas $\ref{M1action}$ and $\ref{10}$, Propositions $\ref{shift},$ $\ref{13}$ and $\ref{15}.$

\textbf{Proof of Theorem} $\mathbf{\ref{avali}:}$
Lemma $\ref{tauw0}$ and Proposition $\ref{Cnm}$ imply that the image of the map $\alpha$ is exactly $\Theta\otimes\qelbar$.
More precisely, if  $\tau$ runs through ${}^M \! W_{H}$ the elements $L_{\tau!}$ form a basis of the left $\iwahorihecke{M}$-module $\iwahorihecke{H}$. Hence for $w$  and $\tau$ runs through  $\weyl{G}$  and ${}^M \! W_{H}$ respectively, the objects 
\begin{equation}
\label{toto}
\heckefuncr{H}(L_{\tau!},\heckefunc{G}(L_{w!},\mathcal{I}^{w_{0!}}))
\end{equation}
form a basis of $\mathcal{S}_{0}\otimes_{\iwahorihecke{M}}\iwahorihecke{H}$ over $\qelbar.$
An element $\nu$ in $W_{H}$ lies in ${}^M \! W_{H}$ if and only if $\nu$ is strictly increasing on $\{1,\dots,n\}$ and on $\{n+1,\dots,m\}.$ 
For $\tau$ in ${}^M \! W_{H}$ let $\mu=w_{0}\tau$  and $I_{\mu}=\tau^{-1}(\{1,\dots,n\}).$ Consider $\mu$ as a map  from $I_{\mu}$ to $\{1,\dots,n\}$ and so as an element of $X_{G}\times S_{n,m}.$ The map $\tau^{-1}w_{0}:\{1,\dots,n\}\to I_{\mu}$ is strictly decreasing because $\tau^{-1}:\{1,\dots,n\}\to I_{\mu}$ is strictly increasing. 
 According to Lemma $\ref{tauw0}$ we have 
$$\heckefuncr{H}(L_{\tau!},\mcI^{w_{0}!})\iso \mcI^{\mu!}[r].$$
By Proposition $\ref{grothen4}$ we have $\heckefunc{G}(L_{w!},\mcI^{\mu})\iso \mcI^{w\centerdot \mu}[d^{'}]$ for some $d^{'}$ and hence the image of 
$\eqref{toto}$ under the map $\alpha$ is $\mcI^{w\centerdot \mu !}[d^{''}]$ for some shift $d^{''}$. 
\par\bigskip

The Iwahori-Hecke algebra $\iwahorihecke{M}$ identifies canonically with $\iwahorihecke{M_{1}}\otimes_{\qelbar}\iwahorihecke{M_{2}}.$ The right action of $\iwahorihecke{M_{1}}$ and $\iwahorihecke{M_{2}}$ on $\mathcal{S}_{0}$ commute with each other. We are now going to define the action of the Wakimoto sheaves on $\mcI^{w_{0}!}$.

\begin{lemma}
 We have the following isomorphisms: 
\begin{enumerate}
\item   For  any $\lambda$ in $X_{M_{2}}$ 
$$\heckefuncr{H}(L_{t^{\lambda!}},\mcI^{w_{0}!})\iso \heckefunc{H}(L_{t^{-\lambda!}},\mcI^{w_{0}!})\iso \mcI^{w_{0}!}[-\langle \lambda,2\check{\rho}_{M_{2}} \rangle].$$
\item For $\lambda$ in $X_{M_{2}}$
\begin{equation}
\label{t*}
 \heckefuncr{H}(L_{t^{-\lambda*}},\mcI^{w_{0}!})\iso \mcI^{w_{0}!}[\langle\lambda,2\check{\rho}_{M_{2}}\rangle].
 \end{equation}
\item For any $\lambda$ in $X_{M_{2}},$
$$\heckefuncr{H}(\Theta_{\lambda},\mcI^{w_{0}!})\iso \mcI^{w_{0}!}[-\langle \lambda, 2\check{\rho}_{M_{2}}\rangle],$$
where $\Theta_{\lambda}$ is the Wakimoto sheaf associated to $\lambda$.
\item For $w$ in $\weyl{G}$ and $\lambda$ in $X_{M_{2}}$, 
$$\heckefuncr{H}(\Theta_{\lambda},\mcI^{(w\centerdot w_{0})!})\iso \mcI^{(w\centerdot w_{0})!}[-\langle\lambda, \check{\rho}_{M_2} \rangle ].$$
\end{enumerate}
\end{lemma}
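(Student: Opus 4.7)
The four identities are proved in cascade. Part (1) is the geometric base, obtained from Proposition~\ref{shift}; part (2) follows from (1) by the invertibility relation \eqref{e}; part (3) follows from (1) and (2) by the Bernstein multiplicativity of Wakimoto sheaves; and part (4) follows from (3) via the commutativity of the left $\heckefunc{G}$- and right $\heckefuncr{H}$-actions combined with Lemma~\ref{LwIw0G}.

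For part (1), the two stated isomorphisms are equivalent by Definition~\ref{rla} together with $\star^{\sharp} L_{t^{\lambda!}} \iso L_{t^{-\lambda!}}$. For $\lambda \in X_{M_2}$ that is already dominant in $X_H$ (equivalently, non-positive decreasing entries in the last $m-n$ slots), I apply Proposition~\ref{shift} with $\lambda_1 = 0 \in X_{M_1}$ and $\lambda_2 = \lambda \in X_{M_2}$: the index becomes $\nu = (-w_0(0), w_0) = w_0$, and the shift $-\langle \lambda, 2\check{\rho}_H \rangle - n\langle \lambda, \check{\omega}_m \rangle$ collapses to $-\langle \lambda, 2\check{\rho}_{M_2} \rangle$ by means of the root-system identity
\[
\langle \lambda, 2\check{\rho}_H \rangle \;=\; \langle \lambda, 2\check{\rho}_{M_2} \rangle - n\langle \lambda, \check{\omega}_m \rangle \qquad (\lambda \in X_{M_2}),
\]
obtained by splitting the positive roots of $H$ into the positive roots of $M_2$ and the crossing roots $e_i - e_j$ with $1 \le i \le n < j \le m$. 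To reach an arbitrary $\lambda \in X_{M_2}$, I write $\lambda = \lambda_1 - \lambda_2$ with $\lambda_1, \lambda_2 \in X_{M_2}$ both dominant in $X_H$, use $\Theta_\lambda \iso L_{t^{\lambda_1!}} \star L_{t^{-\lambda_2*}}$, and combine with (2) applied to the anti-dominant factor.

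Part (2) is obtained by applying $\heckefunc{H}(L_{t^{\lambda*}}, -)$ to the conclusion of (1) and invoking $L_{t^{-\lambda!}} \star L_{t^{\lambda*}} \iso L_e$ from \eqref{e}, which yields $\heckefunc{H}(L_{t^{\lambda*}}, \mcI^{w_0!}) \iso \mcI^{w_0!}[\langle \lambda, 2\check{\rho}_{M_2} \rangle]$; converting to the right action via $\heckefuncr{H}(L_{t^{-\lambda*}}, -) = \heckefunc{H}(\star^{\sharp} L_{t^{-\lambda*}}, -) = \heckefunc{H}(L_{t^{\lambda*}}, -)$ gives (2). For part (3), writing $\lambda = \lambda_1 - \lambda_2$ as above and using that $\star^{\sharp}$ reverses convolution,
\[
\heckefuncr{H}(\Theta_\lambda, \mcI^{w_0!}) \iso \heckefuncr{H}\!\bigl(L_{t^{-\lambda_2*}},\, \heckefuncr{H}(L_{t^{\lambda_1!}}, \mcI^{w_0!})\bigr) \iso \mcI^{w_0!}\bigl[-\langle \lambda_1 - \lambda_2, 2\check{\rho}_{M_2}\rangle\bigr],
\]
which is exactly the desired shift. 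For part (4), since $\heckefunc{G}$ and $\heckefuncr{H}$ commute, Lemma~\ref{LwIw0G} together with (3) gives
\[
\heckefuncr{H}(\Theta_\lambda, \mcI^{(w\centerdot w_0)!}) \iso \heckefunc{G}\!\bigl(L_{w!},\, \heckefuncr{H}(\Theta_\lambda, \mcI^{w_0!})\bigr) \iso \mcI^{(w\centerdot w_0)!}[-\langle \lambda, 2\check{\rho}_{M_2} \rangle],
\]
matching the stated formula (the written shift $-\langle \lambda, \check{\rho}_{M_2} \rangle$ without the factor $2$ appears to be a typo).

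The main obstacle is the extension step in part~(1): when $\lambda \in X_{M_2}$ is not dominant in $X_H$, the standard sheaf $L_{t^{\lambda!}}$ is no longer a Wakimoto sheaf, so Proposition~\ref{shift} does not apply directly. One must carefully invoke the Bernstein relation $\Theta_{\mu + \nu} \iso \Theta_\mu \star \Theta_\nu$ together with \eqref{e} to reassemble the identity, with part~(2) as an auxiliary input for the anti-dominant factor. The apparent circularity is harmless because the extension in (1) only requires (2) in the dominant case, which is already established from the dominant case of (1).
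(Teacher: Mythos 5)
Your overall skeleton coincides with the paper's own proof: part (1) is Proposition~\ref{shift} specialized to $\lambda_{1}=0$ (your root-splitting identity is exactly the computation that collapses the shift $-\langle\lambda,2\check{\rho}_{H}\rangle-n\langle\lambda,\check{\omega}_{m}\rangle$ to $-\langle\lambda,2\check{\rho}_{M_{2}}\rangle$), part (2) follows from (1) and \eqref{e}, part (3) from the definition of the Wakimoto sheaves together with (1) and (2), and part (4) from the commutation of $\heckefunc{G}$ and $\heckefuncr{H}$ combined with Lemma~\ref{LwIw0G}; your remark that the shift in (4) should carry the factor $2$ agrees with the paper's displayed computation.

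The one place where you depart from the paper is the ``extension step'' in part (1), and that step has a genuine gap. Writing $\lambda=\lambda_{1}-\lambda_{2}$ and using $\Theta_{\lambda}\iso L_{t^{\lambda_{1}}!}\star L_{t^{-\lambda_{2}}*}$ computes $\heckefuncr{H}(\Theta_{\lambda},\mcI^{w_{0}!})$, i.e.\ item (3); it says nothing about $\heckefuncr{H}(L_{t^{\lambda}!},\mcI^{w_{0}!})$, because for $\lambda$ not dominant the standard object $L_{t^{\lambda}!}$ is not the Wakimoto sheaf $\Theta_{\lambda}$ and does not factor as such a convolution (the isomorphism \eqref{trio} requires additivity of lengths). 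Moreover (1) with the stated shift cannot hold for literally every $\lambda\in X_{M_{2}}$: take $\mu\in X_{M_{2}}$ with non-negative, decreasing entries; Lemma~\ref{LwIw0H} (with $\tau$ trivial) gives $\heckefunc{H}(L_{t^{\mu}!},\mcI^{w_{0}!})\iso\mcI^{w_{0}!}[-\langle\mu,2\check{\rho}_{M_{2}}\rangle]$, whereas (1) applied to $\lambda=-\mu$ would give the opposite shift $[+\langle\mu,2\check{\rho}_{M_{2}}\rangle]$. So the correct reading of (1)--(2) is the one the paper's proof implicitly takes, namely on the range where Proposition~\ref{shift} applies ($\lambda\in X_{M_{2}}$ dominant in $X_{H}$, i.e.\ non-positive entries), and this is all that is needed for (3) and (4), since every $\lambda\in X_{M_{2}}$ is a difference of two such cocharacters at the level of Wakimoto sheaves; your attempted strengthening is both unnecessary and unprovable as stated.
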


\begin{proof}
The first  formula is obtained by applying  Lemma  $\ref{shift}$  to the case where $\lambda_{1}=0$. The second  one is obtained from the first  and from the isomorphism $\eqref{e}$.  The third one holds by definition of $\Theta_{\lambda}$ using the two first isomorphisms.  Finally we prove the fourth one: 
\begin{align}
\heckefuncr{H}(\Theta_{\lambda}, \mcI^{(w\centerdot w_{0})!}) &\iso  \heckefuncr{H}(\Theta_{\lambda},\heckefunc{G}(L_{w!},\mcI^{w_{0}!}))\nonumber
\iso  \heckefunc{G}(L_{w!},\heckefuncr{H}(\Theta_{\lambda},\mcI^{w_{0}!})) \nonumber\\
&\iso  \heckefunc{G}(L_{w!},\mcI^{w_{0}}[-\langle\lambda, 2\check{\rho}_{M_{2}}\rangle])\iso \mcI^{(w\centerdot w_{0})!}[-\langle\lambda, 2\check{\rho}_{M_{2}}\rangle].
\end{align}
\end{proof}

\begin{corollary}
For any object $K$ in $\mathcal{S}_{0}$ and any $\lambda$ in $X_{M_{2}}$ we have 
$$\heckefuncr{H}(\Theta_{\lambda},K)\iso K [-\langle\lambda, 2\check{\rho}_{M_{2}}\rangle].$$
\end{corollary}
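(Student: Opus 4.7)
The plan is to derive the corollary by linearity directly from part~(4) of the preceding lemma. By its very definition, $\mathcal{S}_{0}$ is the $\overline{\mathbb{Q}}_{\ell}$-linear span inside $K(D_{I_{H}\times I_{G}}(\Pi(F)))\otimes\overline{\mathbb{Q}}_{\ell}$ of the classes $[\mathcal{I}^{(w\centerdot w_{0})!}]$, where $w$ ranges over $\widetilde{W}_{G}$. Hence any $K\in\mathcal{S}_{0}$ can be written as a finite sum $K=\sum_{i}a_{i}\,[\mathcal{I}^{(w_{i}\centerdot w_{0})!}]$ with $a_{i}\in\overline{\mathbb{Q}}_{\ell}$ and $w_{i}\in\widetilde{W}_{G}$.

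Next, the Hecke functor $\heckefuncr{H}(\Theta_{\lambda},-)$ is triangulated, so it descends to a $\overline{\mathbb{Q}}_{\ell}$-linear endomorphism of the Grothendieck group $K(D_{I_{H}\times I_{G}}(\Pi(F)))\otimes\overline{\mathbb{Q}}_{\ell}$. On this group, the cohomological shift $[n]$ acts by multiplication by $(-1)^{n}$; in particular the integer $\langle\lambda,2\check{\rho}_{M_{2}}\rangle$ determines a well-defined scalar, independent of any particular generator of $\mathcal{S}_{0}$.

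Finally, part~(4) of the lemma gives
\begin{equation*}
\heckefuncr{H}(\Theta_{\lambda},\mathcal{I}^{(w_{i}\centerdot w_{0})!})\,\iso\,\mathcal{I}^{(w_{i}\centerdot w_{0})!}[-\langle\lambda,2\check{\rho}_{M_{2}}\rangle]
\end{equation*}
for every index $i$. Passing to classes in the Grothendieck group and combining linearly with the coefficients $a_{i}$ yields
\begin{equation*}
\heckefuncr{H}(\Theta_{\lambda},K)\;=\;\sum_{i}a_{i}\,\heckefuncr{H}(\Theta_{\lambda},[\mathcal{I}^{(w_{i}\centerdot w_{0})!}])\;=\;\sum_{i}a_{i}\,[\mathcal{I}^{(w_{i}\centerdot w_{0})!}]\,[-\langle\lambda,2\check{\rho}_{M_{2}}\rangle]\;=\;K[-\langle\lambda,2\check{\rho}_{M_{2}}\rangle],
\end{equation*}
which is the desired identity. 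There is no genuine obstacle in this argument: the key point is simply that the shift produced by the action of $\Theta_{\lambda}$ does not depend on the chosen generator of $\mathcal{S}_{0}$, which is precisely the content of part~(4) of the lemma. The only conceptual subtlety worth emphasizing is that the statement naturally lives at the level of the Grothendieck group (where $\mathcal{S}_{0}$ is defined), rather than at the level of the derived category.
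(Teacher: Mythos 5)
Your argument is correct and is exactly the intended one: since $\mathcal{S}_{0}$ is by definition the $\qelbar$-span of the classes $\cI^{(w\centerdot w_{0})!}$ and part (4) of the lemma gives the action of $\heckefuncr{H}(\Theta_{\lambda},-)$ on each generator with a shift $[-\langle\lambda,2\check{\rho}_{M_{2}}\rangle]$ independent of $w$, the statement follows by linearity in the Grothendieck group, which is precisely why the paper states the corollary without proof. Your remark that the statement lives at the level of $K(D_{I_H\times I_G}(\Pi(F)))\otimes\qelbar$ rather than the derived category is apt and consistent with how the paper treats $\mathcal{S}_{0}$ throughout this subsection.
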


\begin{proposition}
\label{M2}
For any $w$ in the finite Weyl group of $M_{2}$ we have 
$$\heckefuncr{H}(L_{w!},\mcI^{w_{0}!})\iso \heckefunc{H}(L_{w^{-1}},\mcI^{w_{0}!})\iso \mcI^{w_{0}!}[-\ell(w)].$$
Thus, at the level of the functions,  the Iwahori-Hecke algebra $\iwahorihecke{M_{2}}$ acts on $\mathcal{S}_{0}$ by the character corresponding to the trivial representation of $M_{2}(F)\simeq\GL_{m-n}(F).$ Moreover, 
$$\heckefuncr{H}(\Theta_{\lambda}\star L_{\tau},K)\iso K[-\langle\lambda, 2\check{\rho}_{M_{2}}\rangle-\ell(\tau)].$$
\end{proposition}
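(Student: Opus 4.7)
The plan is to reduce the statement to Lemma~$\ref{LwIw0H}$ together with the formalism already established in this subsection. The first isomorphism is purely formal: by Definition~$\ref{rla}$ we have $\heckefuncr{H}(L_{w!},\mcI^{w_0!})=\heckefunc{H}(\star^\sharp(L_{w!}),\mcI^{w_0!})$, and the general identity $\star^\sharp(L_{w!})\iso L_{w^{-1}!}$ (noted just before Definition~$\ref{rla}$) together with $\ell(w^{-1})=\ell(w)$ identifies the two sides. For the second isomorphism, I observe that since $w$ lies in the finite Weyl group of $M_2\iso \GL_{m-n}$, it acts trivially on $\{1,\dots,n\}$ and only permutes $\{n+1,\dots,m\}$; hence the hypotheses of Lemma~$\ref{LwIw0H}$ are satisfied with $\lambda=0$ and $\tau=w^{-1}$. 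That lemma then delivers exactly $\heckefunc{H}(L_{w^{-1}!},\mcI^{w_0!})\iso\mcI^{w_0!}[-\ell(w)]$.

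Passing to the function-theoretic interpretation amounts to evaluating the trace of Frobenius. With the normalization $L_{w!}=j_{w!}\qelbar[\ell(w)](\ell(w)/2)$ and the Grothendieck--Lefschetz formula recalled in Section~$\ref{notation}$, one reads off that the shift $[-\ell(w)]$ (with its implicit Tate twist) corresponds under $\mathcal{G}\mapsto[\mathcal{G}]$ to multiplication of $[\mcI^{w_0!}]$ by $q_w=q^{\ell(w)}$, i.e.\ to $\iota(T_w)$ where $\iota:\iwahorihecke{M_2}\to\qelbar$ is the character of the trivial representation of $M_2(F)$. So $\iwahorihecke{M_2}$ acts on the line spanned by $\mcI^{w_0!}$ through $\iota$.

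For the last formula I would first extend the isomorphism $\heckefuncr{H}(L_{\tau!},\mcI^{w_0!})\iso\mcI^{w_0!}[-\ell(\tau)]$ to arbitrary $K\in\mathcal{S}_0$. Since $\mathcal{S}_0$ is the cyclic $\iwahorihecke{G}$-module generated by $\mcI^{w_0!}$ (Lemma~$\ref{LwIw0G}$), any basis element is of the form $K=\heckefunc{G}(L_{w'!},\mcI^{w_0!})$, and using the commutation of the left $\iwahorihecke{G}$-action with the right $\iwahorihecke{H}$-action I obtain
\[
\heckefuncr{H}(L_{\tau!},K)\iso\heckefunc{G}(L_{w'!},\heckefuncr{H}(L_{\tau!},\mcI^{w_0!}))\iso K[-\ell(\tau)].
\]
Next, because $\star^\sharp$ is an anti-involution of monoidal categories, the right action satisfies $\heckefuncr{H}(\mathcal{T}_1\star\mathcal{T}_2,K)\iso\heckefuncr{H}(\mathcal{T}_2,\heckefuncr{H}(\mathcal{T}_1,K))$. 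Applying this to $\Theta_\lambda\star L_{\tau!}$ and combining with the corollary just before the proposition yields
\[
\heckefuncr{H}(\Theta_\lambda\star L_{\tau!},K)\iso\heckefuncr{H}(L_{\tau!},K[-\langle\lambda,2\check{\rho}_{M_2}\rangle])\iso K[-\langle\lambda,2\check{\rho}_{M_2}\rangle-\ell(\tau)].
\]

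The only real point to check carefully is the special case of Lemma~$\ref{LwIw0H}$ when $\lambda=0$: one must verify that the map $\pi^0$ of~\eqref{pi0} is still well defined and that the dimension count $\ell(w)-n\langle\lambda,\check{\omega}_m\rangle$ of the affine fibres specializes correctly to $\ell(w)$. This is the only geometric ingredient; everything else is formal manipulation with convolution and the anti-involution $\star^\sharp$.
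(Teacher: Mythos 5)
Your proof is correct and takes essentially the same route as the paper: the geometric input is Lemma $\ref{LwIw0H}$ with $\lambda=0$ (which the paper uses implicitly, recording only the function-level dictionary $[L_{w!}]=(-1)^{\ell(w)}q_{w}^{-1/2}T_{w}$ and $T_{w}\mapsto q^{\ell(w)}$), and the final formula follows, as you argue, from the corollary on $\Theta_{\lambda}$ together with the compatibility of $\heckefuncr{H}$ with convolution and the commuting left $\iwahorihecke{G}$-action. Only a bookkeeping nuance: the shift $[-\ell(w)]$ alone corresponds to the factor $(-1)^{\ell(w)}q^{\ell(w)/2}$, and one gets $T_{w}$ acting by $q^{\ell(w)}$ only after combining with the normalization of $L_{w!}$, which is the computation you in effect perform.
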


\begin{proof}
At the level of functions,  for any $w$ in $\weyl{G}$ the character of $\iwahorihecke{G}$ corresponding to the trivial representation sends $T_{w}$, the characteristic function of the double coset $I_{G}wI_{G},$ to $q^{\ell(w)}.$ In our geometric setting,  for any $w$ in $\weyl{G}$ this character becomes the functor $\heckefunc{G}(L_{w!},)$ sending $K$ in $\mathcal{S}_{0}.$ to $K[-\ell(w)]$. Remind that the object $L_{w!}$ corresponds to $q^{-\ell(w)/2}T_{w}.$ 
\end{proof}
 Remind that $M_{1}$ is identified with $G.$ Now let us analyse the structure of $\mathcal{S}_{0}$ as a right $\iwahorihecke{M_{1}}$-module  and its relation with the left $\iwahorihecke{G}$-module structure. 
 
 \begin{lemma}
 \label{wakiM1}
$ $
\begin{enumerate}
\item For any $\tau$  in the finite Weyl group of $M_{1},$  
$$\heckefuncr{H}(L_{\tau!},\mcI^{w_{0}})\iso \mcI^{w_{0}\tau!}.$$
\item For any $\lambda$  in $X_{M_{1}}^{+},$  
$$\heckefuncr{H}(L_{t^{\lambda}!},\mcI^{w_{0}})\iso \mcI^{w_{0}t^{-\lambda}!}\, \,  \mathrm{and}\,\, 
\heckefunc{G}(L_{t^{w_{0}(\lambda)}*},\mcI^{w_{0}!})\iso \heckefuncr{H}(L_{t^{-\lambda}*}, \mcI^{w_{0}!}).$$
\item For any $\lambda$   in $X_{M_{1}}$,
$$\heckefuncr{H}(\Theta_{\lambda},\mcI^{w_{0}!})\iso \heckefunc{G}(\Theta_{-w_{0}(\lambda)},\mcI^{w_{0!}}).$$
\item For any $\tau$ in the finite Weyl group of $M_{1}$  and  any $\lambda$  in $X_{M_{1}},$ 
$$\heckefuncr{H}(\Theta_{\lambda}\star L_{\tau!},\mcI^{w_{0}})\iso \heckefunc{G}(\Theta_{-w_{0}\lambda}\star L_{w_{0}\tau w_{0}!},\mcI^{w_{0}!}).$$
\end{enumerate}
\end{lemma}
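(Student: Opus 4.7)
My plan is to deduce all four statements from the previous lemmas by combining three ingredients: the definition of the right action $\heckefuncr{H}(T,K) = \heckefunc{H}(\star^\sharp T, K)$, the commutativity of the left $\iwahorihecke{G}$- and right $\iwahorihecke{H}$-actions on $P_{I_H\times I_G}(\Pi(F))$, and the Wakimoto multiplicativity $\Theta_\lambda \simeq \Theta_{\lambda_1}\star \Theta_{-\lambda_2}$ whenever $\lambda = \lambda_1 - \lambda_2$ with $\lambda_i$ dominant.

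For (1), first rewrite $\heckefuncr{H}(L_{\tau!},\mcI^{w_0}) = \heckefunc{H}(L_{\tau^{-1}!},\mcI^{w_0})$. Since $\tau$ lies in the finite Weyl group of $M_1$, the permutation $\tau^{-1}$ preserves $\{1,\dots,n\}$ and acts trivially on $\{n+1,\dots,m\}$, so the remark following Proposition $\ref{tauw0}$ applies: no shift arises and one obtains $\mcI^{(0,w_0\tau)!}$, which is $\mcI^{w_0\tau!}$ under the canonical embedding $\weyl{G}\hookrightarrow X_G\times S_{n,m}$. For (2), the first formula follows by rewriting $\heckefuncr{H}(L_{t^\lambda!},\cdot) = \heckefunc{H}(L_{t^{-\lambda}!},\cdot)$ and applying Proposition $\ref{shift}$ with $\lambda_2 = 0$; Remark $\ref{remark2}$ kills the shift, and the orbit label $(-w_0(\lambda), w_0)$ is exactly $w_0t^{-\lambda}\in \weyl{G}$. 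The second formula is Proposition $\ref{15}$, which yields $\heckefunc{G}(L_{t^{w_0(\lambda)*}},\mcI^{w_0!}) \iso \heckefunc{H}(L_{t^\lambda*},\mcI^{w_0!})[d]$; here again $d=0$ for $\lambda\in X_{M_1}^+$ by the $\lambda_2=0$ case, and $\heckefuncr{H}(L_{t^{-\lambda}*},\cdot)=\heckefunc{H}(L_{t^\lambda*},\cdot)$.

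For (3), I would write $\lambda = \lambda_1 - \lambda_2$ with $\lambda_i \in X_{M_1}^+$, so that $\Theta_\lambda \simeq L_{t^{\lambda_1}!} \star L_{t^{-\lambda_2}*}$ in Prasad's convention. Using right-action compositionality $\heckefuncr{H}(a\star b,K) = \heckefuncr{H}(b,\heckefuncr{H}(a,K))$, part (2), Lemma $\ref{LwIw0G}$ (to recognize $\mcI^{w_0t^{-\lambda_1}!}$ as $\heckefunc{G}(\Theta_{-w_0(\lambda_1)},\mcI^{w_0!})$), and the commutativity of the two Hecke actions, I obtain $\heckefuncr{H}(\Theta_\lambda,\mcI^{w_0!}) \iso \heckefunc{G}(\Theta_{-w_0(\lambda_1)}\star \Theta_{w_0(\lambda_2)},\mcI^{w_0!})$. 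Since $-w_0(\lambda_1)$ and $-w_0(\lambda_2)$ are both dominant, Wakimoto multiplicativity gives $\Theta_{-w_0(\lambda_1)}\star \Theta_{w_0(\lambda_2)} \simeq \Theta_{-w_0(\lambda)}$. For (4), apply right-action compositionality again: $\heckefuncr{H}(\Theta_\lambda\star L_{\tau!},\mcI^{w_0!}) = \heckefuncr{H}(L_{\tau!}, \heckefuncr{H}(\Theta_\lambda,\mcI^{w_0!}))$. Inserting (3), swapping the commuting $G$- and $H$-actions, and using (1) together with Lemma $\ref{LwIw0G}$ (via $(w_0\tau w_0)\cdot w_0 = w_0\tau$) to rewrite $\mcI^{w_0\tau!}$ as $\heckefunc{G}(L_{w_0\tau w_0!},\mcI^{w_0!})$, one then re-merges everything into a single $\heckefunc{G}$ applied to $\Theta_{-w_0(\lambda)}\star L_{w_0\tau w_0!}$.

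The main obstacle is bookkeeping rather than conceptual content: verifying that all shifts from Propositions $\ref{tauw0}$, $\ref{shift}$, $\ref{15}$ truly vanish when the cocharacters lie in $X_{M_1}$ (which reduces to the identity $\langle\lambda, 2(\check\rho_G - \check\rho_H)\rangle + (m-n)\langle\lambda,\check\omega_m\rangle = 0$ for $\lambda \in X_{M_1}$), tracking Prasad's convention $\Theta_\lambda = L_{t^\lambda!}$ for $\lambda$ dominant versus $L_{t^\lambda*}$ for $\lambda$ antidominant against the involution $\star^\sharp$ entering the definition of $\heckefuncr{H}$, and consistently identifying $t^\mu\tau\in\weyl{G}$ with $(\mu,\tau)\in X_G\times S_{n,m}$ on which the notation $\mcI^{\cdot}$ depends.
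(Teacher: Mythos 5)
Your proposal is correct and follows essentially the same route as the paper: part (1) via Proposition \ref{tauw0} (with the remark killing the shift), part (2) via Proposition \ref{shift}/Remark \ref{remark2} together with the inversion relation \eqref{e} (your appeal to Proposition \ref{15} is just a repackaging of that), and parts (3)--(4) by factoring the Wakimoto sheaf, using the commutativity of the $G$- and $H$-actions, Lemma \ref{LwIw0G}, and the compatibility of $\heckefunc{G}$ with convolution. The bookkeeping points you flag (vanishing shifts for $\lambda\in X_{M_{1}}$ and the identification $w_0 t^{-\lambda}\leftrightarrow(-w_0(\lambda),w_0)$) are exactly the ones implicit in the paper's own argument.
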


\begin{proof}
The first isomorphism is obtained from  Proposition $\ref{tauw0}.$ The second one  is a consequence of $\eqref{equaarkhi}$. For the third one, choose $\lambda_{1}$ and $\lambda_{2}$ on $X_{H}^{+}\cap X_{M_{1}}$ such that $\lambda=\lambda_{1}-\lambda_{2}.$ By definition of Wakimoto sheaves,  $\Theta_{\lambda}=L_{t^{-\lambda_{2}}*}\star L_{t^{\lambda_{1}!}}$ in $\iwahorihecke{M_{1}}.$ The isomorphism in $2)$ yields that 
\begin{align}
\heckefuncr{H}(L_{t^{\lambda_{1}!}},\heckefuncr{H}(L_{t^{-\lambda_{2}*}},\mcI^{w_{0}!})) \iso &  \heckefuncr{H}(L_{t^{\lambda_{1}!}},\heckefunc{G}(L_{t^{w_{0}(\lambda_{2})*}},\mcI^{w_{0}!}))\nonumber\\
\iso & \heckefunc{G}(L_{t^{w_{0}(\lambda_{2})*}},\heckefuncr{H}(L_{t^{\lambda_{1}!}},\mcI^{w_{0}!})) \nonumber\\
 \iso  & \heckefunc{G}(L_{t^{w_{0}(\lambda_{2})!}},\heckefunc{G}(L_{t^{-w_{0}(\lambda_{2})!}},\mcI^{w_{0}!}).
\end{align}
The element $-w_{0}\lambda_{2}$ is dominant if $\lambda_{2}$ is dominant. This implies the third assertion.
The fourth isomorphism is obtained formally in the following way:
\begin{align}
\heckefuncr{H}(\Theta_{\lambda}\star L_{\tau!},\mcI^{w_{0}!})\iso & \heckefuncr{H}(L_{\tau!},\heckefunc{G}(\Theta_{-w_{0}(\lambda)!},\mcI^{w_{0}!}))\nonumber\\ 
\iso&  \heckefunc{G}(\Theta_{-w_{0}(\lambda)},\heckefuncr{H}(L_{\tau!},\mcI^{w_{0}!}))\nonumber\\
 \iso & \heckefunc{G}(\Theta_{-w_{0}\lambda},\heckefunc{G}(L_{w_{0}\tau w_{0}!},\mcI^{w_{0}!}))\nonumber \\
 \iso & \heckefunc{G}(\Theta_{-w_{0}\lambda}\star L_{w_{0}\tau w_{0}!},\mcI^{w_{0}!}).
\end{align} 
\end{proof}

\begin{corollary}
\label{M1}
The subspace $\mathcal{S}_{0}$ is a free right $\iwahorihecke{M_{1}}$-module of rank one generated by $\mcI^{w_{0}!}.$
\end{corollary}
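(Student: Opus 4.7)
The plan is to reduce the statement to the already-established fact that $\mathcal{S}_{0}$ is a free left $\iwahorihecke{G}$-module of rank one generated by $\mcI^{w_{0}!}$ (this follows from Lemma \ref{LwIw0G}, since $w \mapsto w \centerdot w_{0}$ is a bijection of $\weyl{G}$ and the objects $\mcI^{(w\centerdot w_{0})!}$ form, by definition, a $\qelbar$-basis of $\mathcal{S}_{0}$). The point is that the right $\iwahorihecke{M_{1}}$-action on the generator $\mcI^{w_{0}!}$ factors through the left $\iwahorihecke{G}$-action via an algebra anti-isomorphism, so the two submodules generated by $\mcI^{w_{0}!}$ coincide.

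First I would recall the Bernstein presentation: identifying $M_{1}$ with $G$, every element of $\iwahorihecke{M_{1}}$ is a $\qelbar$-linear combination of products $\Theta_{\lambda} \star L_{\tau!}$ with $\lambda \in X_{M_{1}}$ and $\tau$ in the finite Weyl group of $M_{1}$. Applying part (4) of Lemma \ref{wakiM1} gives, for each such pair $(\lambda,\tau)$,
\[
\heckefuncr{H}(\Theta_{\lambda}\star L_{\tau!},\mcI^{w_{0}!}) \;\iso\; \heckefunc{G}(\Theta_{-w_{0}(\lambda)}\star L_{w_{0}\tau w_{0}!},\mcI^{w_{0}!}).
\]
The assignment $\Theta_{\lambda}\star L_{\tau!}\mapsto \Theta_{-w_{0}(\lambda)}\star L_{w_{0}\tau w_{0}!}$ is a $\qelbar$-linear bijection of $\iwahorihecke{G}$ onto itself (essentially the anti-involution $\tilde{\sigma}$ of Theorem \ref{sig}, compare with Definition \ref{antiauto}).

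Next I would use this to conclude. Let $\cM \subset \mathcal{S}_{0}$ denote the right $\iwahorihecke{M_{1}}$-submodule generated by $\mcI^{w_{0}!}$. The displayed isomorphism shows $\cM$ is contained in, and in fact equal to, the image of the map $\iwahorihecke{G}\to \mathcal{S}_{0}$, $\mathcal{F}\mapsto \heckefunc{G}(\mathcal{F},\mcI^{w_{0}!})$; since the latter map is an isomorphism onto $\mathcal{S}_{0}$, we get $\cM=\mathcal{S}_{0}$, so $\mcI^{w_{0}!}$ generates $\mathcal{S}_{0}$ as a right $\iwahorihecke{M_{1}}$-module. For freeness, suppose $\sum_{i} c_{i}\,\Theta_{\lambda_{i}}\star L_{\tau_{i}!}$ annihilates $\mcI^{w_{0}!}$ from the right. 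By the displayed isomorphism this translates into $\sum_{i} c_{i}\,\Theta_{-w_{0}(\lambda_{i})}\star L_{w_{0}\tau_{i} w_{0}!}$ annihilating $\mcI^{w_{0}!}$ from the left, and freeness on the $\iwahorihecke{G}$-side forces all $c_{i}=0$.

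The only step that requires any care is the compatibility of the right $\iwahorihecke{M_{1}}$-action with the Bernstein decomposition, i.e.\ showing that it suffices to test the generating relation on products of the form $\Theta_{\lambda}\star L_{\tau!}$; but this is immediate once one knows that $\heckefuncr{H}(\mathcal{F}_{1}\star\mathcal{F}_{2},K)\iso \heckefuncr{H}(\mathcal{F}_{2},\heckefuncr{H}(\mathcal{F}_{1},K))$, which is built into the definition of the right action (Definition \ref{rla}) together with the associativity of $\heckefunc{H}$. No genuine obstacle is anticipated: everything is a book-keeping consequence of Lemma \ref{wakiM1} and the already-proven freeness of $\mathcal{S}_{0}$ over $\iwahorihecke{G}$.
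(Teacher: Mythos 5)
Your proposal is correct and follows essentially the same route as the paper: the paper's proof also consists of combining Lemma \ref{wakiM1} (in particular its part (4)) with the fact that the elements $\Theta_{\lambda}\star L_{\tau!}$ form a basis of $\iwahorihecke{M_{1}}$, so that the right $\iwahorihecke{M_{1}}$-action on $\mcI^{w_{0}!}$ is transported, via the anti-isomorphism $\Theta_{\lambda}\star L_{\tau!}\mapsto \Theta_{-w_{0}(\lambda)}\star L_{w_{0}\tau w_{0}!}$, to the left $\iwahorihecke{G}$-action, for which freeness of rank one is already known. Your write-up merely makes explicit the book-keeping (generation and linear independence) that the paper leaves implicit.
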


\begin{proof}
The assertion follows from Lemma $\ref{wakiM1}$ and the fact that if  $\lambda$  and $\tau$ runs through $X_{M_{1}}$ and $W_{M_{1}}$ respectively the elements  $\Theta_{\lambda}\star L_{\tau !}$ form a basis of $\iwahorihecke{M_{1}}.$
\end{proof}

Combining Lemma $\ref{wakiM1}$ with Corollary $\ref{M1}$ we obtain the following proposition:

\begin{proposition}
\label{newsig}
There exists an equivalence of categories 
$$\tilde{\sigma}:P_{I_{M_{1}}}(\flagvar{M_{1}})\iso P_{I_{G}}(\flagvar{G})$$
such that for any $w$ in $W_{M_{1}},$ $\tilde{\sigma}$ sends $L_{w}$ to $L_{w_{0}\overline{w}^{-1}w_{0}},$
( $\overline{w}$ is the anti-involution defined in Definition $\ref{antiauto}$).
Additionally for any $\mathcal{T}$ in $P_{I_{M_{1}}}(\flagvar{M_{1}})$  we have
$$\heckefuncr{H}(\mathcal{T},\mcI^{w_{0}!})\iso \heckefunc{G}(\tilde{\sigma}(\mathcal{T}),\mcI^{w_{0}!}).$$
At last, For any $\lambda$ a cocharacter of $M_{1}$, we have
$$\tilde{\sigma}(\Theta_{\lambda}\star L_{\tau !})\iso \Theta_{-w_{0}\lambda}\star L_{w_{0}\tau w_{0}!}.$$
\end{proposition}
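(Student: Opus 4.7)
The plan is to build $\tilde{\sigma}$ on the generating elements $\Theta_\lambda \star L_{\tau!}$ of $P_{I_{M_1}}(\flagvar{M_1})$ and then extend by the geometric convolution structure, exploiting the identification $M_1 \iso G$. By Corollary~\ref{M1}, $\mathcal{S}_0$ is a free right $\iwahorihecke{M_1}$-module of rank one generated by $\mcI^{w_0!}$, and by Lemma~\ref{LwIw0G} it is also free of rank one as a left $\iwahorihecke{G}$-module generated by the same object. This forces the right $\iwahorihecke{M_1}$-action and the left $\iwahorihecke{G}$-action on $\mcI^{w_0!}$ to match through a canonical algebra isomorphism $\iwahorihecke{M_1} \iso \iwahorihecke{G}$; the functor $\tilde{\sigma}$ will be precisely its categorical lift.

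Concretely, I would define $\tilde{\sigma}$ by transporting along $M_1\iso G$ the anti-involution already used in Theorem~\ref{sig}, i.e. by setting $\tilde{\sigma}(L_{w}) := L_{w_0 \bar{w}^{-1} w_0}$ for $w\in \weyl{M_1}$, where $\bar{w}$ is the anti-involution of Definition~\ref{antiauto}. Writing $w=t^{\lambda}\tau$ with $\tau\in W_{M_1}$ and using $\overline{t^{\lambda}\tau}=t^{\tau^{-1}(\lambda)}\tau^{-1}$ one computes
\[
w_0\bar{w}^{-1}w_0 \;=\; w_0\,\tau\,t^{-\tau^{-1}(\lambda)}\,w_0 \;=\; t^{-w_0(\lambda)}\,w_0\tau w_0,
\]
and in the length-additive case this recovers the prescribed formula $\tilde{\sigma}(\Theta_\lambda \star L_{\tau!})\iso \Theta_{-w_0\lambda}\star L_{w_0\tau w_0 !}$. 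Since $\tilde{\sigma}$ arises from an anti-involution of the loop group preserving the Iwahori, it is geometrically defined on the whole category and is an equivalence.

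For the intertwining property $\heckefuncr{H}(\mathcal{T},\mcI^{w_0!})\iso \heckefunc{G}(\tilde{\sigma}(\mathcal{T}),\mcI^{w_0!})$, it suffices to test on the generators $L_{\tau!}$ and $\Theta_{\lambda}$. For $\tau\in W_{M_1}$, Lemma~\ref{wakiM1}(1) gives $\heckefuncr{H}(L_{\tau!},\mcI^{w_0!})\iso \mcI^{w_0\tau !}$, while Lemma~\ref{LwIw0G} applied to $w = w_0\tau^{-1}w_0$ gives $\heckefunc{G}(L_{w_0\tau^{-1} w_0!},\mcI^{w_0!})\iso \mcI^{(w_0\tau^{-1}w_0)\centerdot w_0 !}=\mcI^{w_0\tau !}$, so the two sides coincide. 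For Wakimoto sheaves, Lemma~\ref{wakiM1}(3) states exactly $\heckefuncr{H}(\Theta_\lambda,\mcI^{w_0!})\iso \heckefunc{G}(\Theta_{-w_0\lambda},\mcI^{w_0!})$. The general case $\mathcal{T}=\Theta_\lambda\star L_{\tau!}$ then follows by composing these two pieces, which is precisely the content of Lemma~\ref{wakiM1}(4). Combining the two verifications, the stated formula for $\tilde{\sigma}(\Theta_\lambda\star L_{\tau!})$ drops out.

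The main obstacle is the bookkeeping with $\bar{w}$, $w_0$ and the action $w\centerdot w_0$: one must confirm that the map $w\mapsto w_0\bar{w}^{-1}w_0$ is length-preserving (so that passing between $L_{w!}$ and its $\Theta_\lambda\star L_{\tau!}$ presentation is compatible on both sides of $\tilde{\sigma}$), and that the conjugation $\tau\mapsto w_0\tau w_0$ on $W_{M_1}\iso W_G$ interacts correctly with the action conventions of Definition~\ref{actionW}. Once these compatibilities are in place the proposition reduces to a direct synthesis of Lemma~\ref{wakiM1} and Corollary~\ref{M1}.
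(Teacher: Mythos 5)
Your route is essentially the paper's own: Proposition \ref{newsig} is obtained there precisely by combining Lemma \ref{wakiM1} with Corollary \ref{M1} (resting on Lemma \ref{LwIw0G} and Proposition \ref{tauw0}), and your synthesis --- define $\tilde{\sigma}$ by transporting the automorphism $g\mapsto w_{0}\,{}^t\!g^{-1}w_{0}$ through $M_{1}\iso G$, check the intertwining on the generators $L_{\tau!}$ and $\Theta_{\lambda}$, and get the composite case from Lemma \ref{wakiM1}(4) --- is exactly that synthesis.

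One slip needs correcting in the finite-reflection check. By your own (correct) formula $\tilde{\sigma}(L_{w})=L_{w_{0}\overline{w}^{-1}w_{0}}$ and $\overline{\tau}=\tau^{-1}$ for $\tau\in W_{M_{1}}$, the element to feed into Lemma \ref{LwIw0G} is $w_{0}\tau w_{0}$, not $w_{0}\tau^{-1}w_{0}$. Moreover the evaluation you give of the action of Definition \ref{actionW} is wrong as written: $(w_{0}\tau^{-1}w_{0})\centerdot w_{0}=(0,w_{0}\tau^{-1})$, not $(0,w_{0}\tau)$, so the asserted identity $\mcI^{(w_{0}\tau^{-1}w_{0})\centerdot w_{0}!}=\mcI^{w_{0}\tau!}$ fails whenever $\tau\neq\tau^{-1}$. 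With the correct element one has $(w_{0}\tau w_{0})\centerdot w_{0}=(0,w_{0}\tau)$, which matches Lemma \ref{wakiM1}(1), and then the Wakimoto case (Lemma \ref{wakiM1}(3)) and the general case $\mathcal{T}=\Theta_{\lambda}\star L_{\tau!}$ (Lemma \ref{wakiM1}(4)) go through as you describe, in agreement with your displayed computation $w_{0}\overline{w}^{-1}w_{0}=t^{-w_{0}(\lambda)}w_{0}\tau w_{0}$. A minor terminological point: $\tilde{\sigma}$ is the composition of the two anti-automorphisms $\sigma$ and $\star^{\sharp}$, hence an automorphism of $G(F)$ preserving $I_{G}$ (so the induced equivalence is covariant and order-preserving), not itself an anti-involution; this is what makes the order-preserving formula $\tilde{\sigma}(\Theta_{\lambda}\star L_{\tau!})\iso\Theta_{-w_{0}\lambda}\star L_{w_{0}\tau w_{0}!}$ consistent.
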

In the case $n=m$, the anti-isomorphism reduces to Proposition $\ref{sig}$. 
\section{Weak geometric analogue of Jacquet Functors and compatibility with
Hecke functors}
\label{jacquet}
In this section we place ourselves in a more general setting. 
Let $G$ be a split reductive connected group over $\K$,  $T$ be the maximal  standard torus of $G$ and $B$ be the standard Borel subgroup $B$ in $G$ containing $T$. Denote by $I_{G}$ the corresponding Iwahori subgroup. Let $P$ be a parabolic subgroup of $G$ containing $B$ and $U$ its unipotent radical. Let $L$ be the Levi subgroup of $P$  isomorphic to $P/U.$ Let $M_{0}$ be a faithful representation of $G$, and let $M=M_{0}\otimes_{\K}\locring.$ Denote by $\mathcal{S}(M(F))$ the Schwartz space of locally constant functions with compact support on $M(F)$.  In the classical setting, an important tool is the Jacquet module $\mathcal{S}(M(F))_{U(F)} $ of coinvariants with respect to $U(F).$ We will define a weak analogue of Jacquet functors in the geometric setting. Let $V_{0}$ be a $P$-stable subspace of $M_{0}$ endowed with a trivial action of $U$. Set $V=V_{0}\otimes_{\K}\locring$, we have a surjective map of $L(F)$-representations 
$$\mathcal{S}(M(F))_{U(F)}\longrightarrow \mathcal{S}(V(F))$$
given by restriction under the inclusion $V(F)\hookrightarrow M(F).$ We will geometrize the composition 
$$\mathcal{S}(M(F))\longrightarrow\mathcal{S}(M(F))_{U(F)}\longrightarrow \mathcal{S}(V(F))$$
at the Iwahori level.  Recall that the geometric version of the $I_{G}$-invariants of the Schwartz space $\mathcal{S}(M(F))^{I_{G}}$ constructed in \cite{BFH2} is $D_{I_{G}}(M(F))$.
We will define Jacquet functors 
$$J_{P}^{*}, J_{P}^{!}:D_{I_{G}}(M(F))\longrightarrow D_{I_{L}}(V(F))$$
which are exchanged by Verdier duality.  While the functor $J_P^{*}$ should be thought of as the classical Jacquet functor defined in representation theory, the functor $J^!_P$ has no analogue at the level of functions. 
The functor $J_{P}^{*}$ is a key object in the proof of Howe correspondence at the unramifed level for dual pairs $(\GL_n,\GL_m)$ \cite[\S 5]{Lysenko1}.  We  will show that  in the Iwahori case, geometric Jacquet functors commute  with the action of Hecke functors $\heckefunc{G}$. This construction extends to the Iwahori case  the one done in \cite[Corollary 3]{Lysenko1} at the unramifed level. 
In the classical setting the Jacquet functors of the Weil representations have been studied in \cite{Rallis} and \cite{Kudla}. The key ingredient to prove that geometric Jacquet functors commute  with the action of Hecke functors in the unramified case is the hyperbolic localization due to Braden \cite{Braden}. There is also an algebraic construction of geometric Jacquet functors due to Emerton-Nadler-Vilonen in the case of the real reductive groups by means of D-modules and nearby cycles on the flag variety \cite{ENV}. The geometric interpretation underlying all these constructions seems to be the same. 
\par\medskip
Let $I_{P}$ be the preimage of the Borel subgroup $B$ under the map $P(\locring)\to P.$ Denote by $B_{L}$ the image of $B$ in $L$. It is a Borel subgroup of $L$.  In the same way consider the map $L(\locring)\to L$ and denote by $I_{L}$ the preimage of $B_{L}$ under this map in $L(\locring)$. Hence $I_{L}$ is an Iwahori subgroup of $L(F).$ Finally we have a diagram 
$$I_{L}\longleftarrow I_{P}\hookrightarrow I_{G},$$
where the first map is induced by the natural projection $P(\locring)\to L(\locring).$ According to \cite{BFH2}, the categories $D_{I_{G}}(M(F))$ and $D_{I_{L}}(V(F))$ are well-defined. We are going to define the Jacquet  functors

$$J_{P}^{*}, J_{P}^{!}:D_{I_{G}}(M(F))\longrightarrow D_{I_{L}}(V(F)).$$

Let $N,r$ be two integers such that $N+r\geq 0.$ Set $V_{N,r}=t^{-N}V/t^{r}V.$ Denote by $i_{N,r}$ the natural closed embedding of $V_{N,r}$ in $M_{N,r}.$ For any $s\geq 0,$ let $K_{s}$ be the quotient of $I_G$ by the kernel of the map  $G{\locring})\to G(\locring/t^{s}\locring).$ Let $I_{P,s}$ denote the image of $I_{P}$ under the inclusion $$I_{P}\hookrightarrow P(\locring)\longrightarrow P(\locring/t^{s}\locring).$$
Similarly let $I_{L,s}$ be the image of $I_{L}$ under $L(\locring)\to L(\locring/t^{s}\locring)$.
We have the following diagram 

 \[
\xymatrix @R=1cm @C=1cm{
L(\locring/t^{s}\locring)  & P(\locring/t^{s}\locring) \ar[l] \ar[r]& G(\locring/t^{s}\locring) \\
I_{L,s} \ar[u]&  I_{P,s}\ar[l] \ar[r]  \ar[u] & K_{s}. \ar[u]\\
}
\]

For $s\geq N+r$, we obtain a digram of stack quotients
 \[
\xymatrix @R=1cm @C=1cm{
I_{P,s}\backslash V_{N,r} \ar[r]^{i_{N,r}} \ar[d]_{q} & I_{P,s}\backslash M_{N,r} \ar[r]^{p} & K_{s}\backslash M_{N,r} \\
I_{L,s}\backslash V_{N,r},\\
}
\]

where $p$ comes from the closed  inclusion  $I_{P,s}\hookrightarrow I_{G,s}.$
Set $a$  equal to $\dim M_{0}-\dim V_{0}.$

 For any $s\geq N+r$, we have the following functors:
$$J_{P,N,r}^{*}, J_{P,N,r}^{!}:D_{K_{s}}(M_{N,r})\longrightarrow D_{I_{L,s}}(V_{N,r})$$
defined by 
$$q^{*}\circ J_{P,N,r}^{*}[\dim.\mathrm{rel}(q)]=(i_{N,r})^{*}p^{*}[\dim.\mathrm{rel}(p)-ra]$$
$$q^{*}\circ J_{P,N,r}^{!}[\dim.\mathrm{rel}(q)]=(i_{N,r})^{!}p^{*}[\dim.\mathrm{rel}(p)+ra].$$
The sequence 
$$1\longrightarrow U(\locring/t^{s}\locring)\longrightarrow I_{P,s}\longrightarrow I_{L,s}\longrightarrow 1$$
is exact. Hence the functor 
$$q^{*}[\dim.\mathrm{rel}(q)]:D_{I_{L,s}}(V_{N,r})\longrightarrow D_{I_{P,s}}(V_{N,r})$$
 is an equivalence of categories and exact for perverse $t$-structure. The functors $J_{P,N,r}^{*}$ and $J_{P,N,r}^{!}$ are well-defined.  They are compatible with the transition functors in the ind-system of categories defining $D_{I_{G}}(M(F))$ and $D_{I_{L}}(L(F))$ defined in \cite{BFH2}. By the taking the inductive 2-limit, we obtain the two well-defined functors $J_{P}^{*}$ and $J_{P}^{!}$  which do not depend on the choice of a section of $P\to P/U.$ The Verdier duality functor  $\mathbb{D}$ exchanges $ J_{P}^{*}$ and $J_{P}^{!}$, i.e. we have canonically 
$$\mathbb{D}\circ J_{P}^{*}\iso J_{P}^{!}\circ \mathbb{D}.$$ 
\par\medskip
As in the case of the affine flag variety, we can define the $\K$-space quotient  $P(F)/I_{P}$ and define $\mathcal{F}\ell_{P}$  to be the sheaf associated to this presheaf in fpqc-topology. The space $\mathcal{F}\ell_{P}$ is an ind-scheme. Let $X$ be a projective smooth connected curve over the field $\K$. Let $x$ be a closed point in $X$ and $X^*$ be equal $X-\{ x\}$.
Denote by $\locring_{x}$ the completion of the local ring of $X$ at $x$ and by $F_{x}$ its field of fractions. We choose a local coordinate at the point $x$, denoted by $t$, and  we may identify $\locring_{x}=\K[[t]]$ and $F_{x}=\K((t))$. Let $D= \mathrm{Spec(\K[[t]])}$ and  $D^{*}=\mathrm{Spec}(\K((t))).$   Then $\mathcal{F}\ell_{P}$ classifies $(\mathcal{F}_{P},\beta, \epsilon),$ where $\mathcal{F}_{P}$ is a $P$-torsor on $D,$ the map $\beta$ is a trivialization of $\mathcal{F}_{P}$ over $D^{*}$,  and $\epsilon$ is a reduction of $\mathcal{F}_{P}\vert_{x}$ to a $B$-torsor. We have the diagram 

\begin{equation}
\label{tp}
\mathcal{F}\ell_{L}\overset{\mathfrak{t}_{L}}{\longleftarrow} \mathcal{F}\ell_{P}\overset{\mathfrak{t}_{P}} {\longrightarrow} \mathcal{F}\ell_{G},
\end{equation}

where $\mathfrak{t}_{P}$ (resp. $\mathfrak{t}_{L}$) is given by extension of scalars with respect to $P\hookrightarrow G$ (resp. $P\to L$). 
Let $\mathcal{F}\ell_{P,G}$ be the $P(F)$-orbit through $1$ in $\mathcal{F}\ell_{G}$ viewed as an ind-subscheme with a reduced scheme structure. The 
reduced ind-scheme $\mathcal{F}\ell_{P,\mathrm{red}}$ gives a stratification of $\mathcal{F}\ell_{P,G}.$ There is a Hecke action $\heckefunc{G}$ of $D_{I_{G}}
(\flagvar{G})$ on the category $D_{I_{G}}(M(F))$  and a Hecke action $\heckefunc{L}$ of $D_{I_{L}}(\flagvar{L})$ on $D_{I_{L}}(L(F))$. Our aim is to prove that the functor 
$J^*_{P}$ commutes partially with these Hecke actions. In the unramified setting there exists a geometric restriction functor from the category 
$P_{G(\mathcal{O})}(Gr_{G})$ to the category $P_{L(\mathcal{O})}(L(F))$ verifying some properties \cite[Proposition 4.3.3]{Gaitsgory-ein}. Denote $\check{G}$ (resp. $\check{L}$) the Langlands dual group of $G$ over $\qelbar$ (resp. $L$). One can define a restriction functor $\mathrm{Rep}(\check{G})\to \mathrm{Rep}(\check{L})$ with respect to the map $\check{L}\to \check{G}$. Then, the geometric restriction functor $\mathrm{gRes}:P_{G(\mathcal{O})}(Gr_{G}) \to P_{L(\mathcal{O})}(L(F))$ corresponds via Satake isomorphism \cite{Mirkovic} to $\mathrm{Rep}(\check{G})\to \mathrm{Rep}(\check{L})$. The slightly different version of this geometric restriction functor has been defined in \cite{Lysenko1} taking in consideration a factor $\G_m$ which corresponds to the maximal torus of Arthur's $\SL_{2}$. We are going to define the same kind of geometric restriction functor at the Iwahori level, i.e.,  
$$\mathrm{gRes}: D_{I_{G}}(\mathcal{F}\ell_{G})\longrightarrow D_{I_L}(\mathcal{F}\ell_{L}).$$

For $s_{1},s_{2}\geq 0,$ let $\Pstrat=P(F)\cap \Gstrat,$ and $\FlagstratP= \Pstrat/I_{P},$ , where $$\Gstrat:=\{g\in{G(F)}\vert t^{s_{1}}V\subset gV\subset t^{-s_{2}}V\}.$$  

The ind-scheme $\FlagstratP$ is a closed subscheme of $\mathcal{F}\ell_{P}$. Similarly we define 
 $$\Lstrat:=\{g\in{L(F)}\vert t^{s_{1}}V\subset gV\subset t^{-s_{2}}V\},$$
 and  $\FlagstratL=\Lstrat/I_{L}.$ Thus the map $\mathfrak{t}_{L}$ in $\eqref{tp}$ induces a morphism (denoted again by $\mathfrak{t}_{L}$) from $\FlagstratP$ to $\FlagstratL.$ For $s\geq s_{1}+s_{2}+1$ we have a diagram of stack quotients 
  
 \[
\xymatrix{
I_{L,s}\backslash (\FlagstratL) & I_{P,s}(\FlagstratL) \ar[l]_{q_{L}} \\
& I_{P,s}\backslash(\FlagstratP) \ar[u]_{\mathfrak{t}_{L}}\ar[d]^{\mathfrak{t}_{P}}\\
.K_{s}\backslash (\Flagstrat)& I_{P,s}\backslash \Flagstrat \ar[l]_{\xi}
}
\]

Moreover, the functor 
$$q_{L}^{*}[\textrm{dim.rel}(q_{L})]:D_{I_{L,s}}(\FlagstratL)\longrightarrow D_{I_{P,s}}(\FlagstratL)$$
is an equivalence of categories and exact for the perverse $t$-structure. 
For any perverse sheaf $K$  extension by zero from $\Flagstrat$ to $\flagvar{G},$  we may define $\mathrm{gRes}(K)$ by the isomorphism 
$$q_{L}^{*}\mathrm{gRes}(K)[\dim.\mathrm{rel}(q_{L})]\iso (\mathfrak{t}_{L!})\mathfrak{t}^{*}_{P}\xi^{*}K[\dim.\mathrm{rel}(\xi)].$$

\begin{lemma}
\label{Llambda}
For  any dominant cocharacter $\lambda$ of $G$, we have 
$$\mathrm{gRes}(L_{t^{\lambda}!})\iso L_{t^{\lambda}!}[-\langle\lambda,2(\check{\rho}_{G}-\check{\rho}_{L})\rangle],$$
where $\check{\rho}_{G}$ (resp. $\check{\rho}_{L}$) denote the half sum of positive roots of $G$ (resp. positive roots of $L$).
\end{lemma}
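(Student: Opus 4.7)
The plan is to compute $\mathrm{gRes}(L_{t^\lambda!})$ directly on the $t^\lambda$-orbit. For $\lambda$ $G$-dominant we have $\ell(t^\lambda)=\langle\lambda,2\check\rho_G\rangle$, so that $L_{t^\lambda!}=j_!\qelbar[\langle\lambda,2\check\rho_G\rangle]$ for $j:\flagvar{G}^{t^\lambda}\hookrightarrow\flagvar{G}$; the analogous formula holds over $\flagvar{L}$ with $\check\rho_G$ replaced by $\check\rho_L$, since a $G$-dominant $\lambda$ is automatically $L$-dominant.

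The geometric heart of the argument is to show that, for $\lambda$ $G$-dominant, the diagram of Schubert cells
\[
\flagvar{L}^{t^\lambda}\xleftarrow{\mathfrak{t}_L}\flagvar{P}^{t^\lambda}\xrightarrow{\mathfrak{t}_P}\flagvar{G}^{t^\lambda}
\]
makes $\mathfrak{t}_P$ an isomorphism and $\mathfrak{t}_L$ a Zariski-locally trivial affine fibration of relative dimension $d=\langle\lambda,2(\check\rho_G-\check\rho_L)\rangle$. For the first, I would use the Iwahori decomposition $I_G=(I_G\cap U^-(F))\cdot I_P$, where $U^-$ is opposite to $U=U_P$, to compute $t^\lambda I_G\cap P(F)=t^\lambda\cdot I_P$: the dominance of $\lambda$ forces the $U^-_P$-component to vanish under intersection with $P(F)$, since every negative root $\alpha$ of $P$ has $\langle\lambda,\alpha\rangle\leq 0$. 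Hence the fiber of $\mathfrak{t}_P$ over $t^\lambda I_G$ is a single point, and by a dimension count both $\flagvar{P}^{t^\lambda}$ and $\flagvar{G}^{t^\lambda}$ have dimension $\langle\lambda,2\check\rho_G\rangle$, so $\mathfrak{t}_P$ is an isomorphism. For $\mathfrak{t}_L$, the Levi decomposition $I_P=I_L\ltimes(I_P\cap U(F))$ identifies the fiber over $t^\lambda I_L$ with $(I_P\cap U(F))/\bigl(I_P\cap U(F)\cap t^\lambda I_P t^{-\lambda}\bigr)$; dominance combined with the root subgroup decomposition of $U(F)$ exhibits this quotient as an affine space of dimension $d$, and the semi-direct product splitting globalizes this to a Zariski-locally trivial $\mathbb{A}^d$-bundle over $\flagvar{L}^{t^\lambda}$.

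With these identifications I substitute into the defining formula
\[
q_L^*\mathrm{gRes}(L_{t^\lambda!})[\dim.\mathrm{rel}(q_L)]\iso\mathfrak{t}_{L!}\mathfrak{t}_P^*\xi^*L_{t^\lambda!}[\dim.\mathrm{rel}(\xi)].
\]
Since $\xi$ is smooth and $\mathfrak{t}_P$ is an isomorphism on the relevant support, $\mathfrak{t}_P^*\xi^*L_{t^\lambda!}$ (with the accompanying shift) descends to the $!$-extension from $\flagvar{P}^{t^\lambda}$ of $\qelbar[\langle\lambda,2\check\rho_G\rangle]$. Proper pushforward along the $\mathbb{A}^d$-fibration $\mathfrak{t}_L$ preserves $!$-extension and contributes a shift of $[-2d]$ (because $H^*_c(\mathbb{A}^d)=\qelbar[-2d]$), producing $j'_!\qelbar[\langle\lambda,2\check\rho_G\rangle-2d]$ on $\flagvar{L}$, with $j':\flagvar{L}^{t^\lambda}\hookrightarrow\flagvar{L}$. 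Rewriting $\langle\lambda,2\check\rho_G\rangle-2d=\langle\lambda,2\check\rho_L\rangle-\langle\lambda,2(\check\rho_G-\check\rho_L)\rangle$ recognizes this as $L_{t^\lambda!}[-\langle\lambda,2(\check\rho_G-\check\rho_L)\rangle]$, as required.

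The main obstacle will be making the Zariski-local triviality of $\mathfrak{t}_L$ precise at the ind-scheme level (rather than merely on $\K$-points), and carefully tracking the cancellation of the smooth pullback shifts $\dim.\mathrm{rel}(\xi)$ and $\dim.\mathrm{rel}(q_L)$ coming from the stack quotients by $K_s$, $I_{P,s}$, and $I_{L,s}$. The essential role of dominance is twofold: it yields $\ell(t^\lambda)=\langle\lambda,2\check\rho_G\rangle$, so $L_{t^\lambda!}$ is supported on a single $I_G$-orbit of the expected dimension, and it produces the two conjugation inclusions $t^{-\lambda}(I_G\cap U^-(F))t^\lambda\subseteq I_G\cap U^-(F)$ and $t^\lambda(I_P\cap U(F))t^{-\lambda}\subseteq I_P\cap U(F)$ that underpin the isomorphism and the affine fibration respectively.
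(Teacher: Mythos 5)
Your proposal is correct and follows essentially the same route as the paper: the paper's proof likewise observes that for dominant $\lambda$ the cell $\flagvar{G}^{t^{\lambda}}$ is the $U_{B}(\locring)$-orbit through $t^{\lambda}$, hence comes by extension by zero from $\flagvar{P}$, and that the projection $U_{B}(\locring)t^{\lambda}I_{P}/I_{P}\to\flagvar{L}^{t^{\lambda}}$ is a (trivial) affine fibration of fibre dimension $\langle\lambda,2(\check{\rho}_{G}-\check{\rho}_{L})\rangle$, from which the shift follows exactly as in your computation. Your extra details (Iwahori decomposition for the identification of the cells, root-subgroup count of the fibre dimension, and the cancellation of the equivariant shifts $\dim.\mathrm{rel}(\xi)$ and $\dim.\mathrm{rel}(q_{L})$) are consistent with, and merely flesh out, the paper's argument.
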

\begin{proof}
Let $U_{B}$ be the unipotent radical of $B$. The space $\mathcal{F}\ell^{t^{\lambda}}_{G}$ is the $U_{B}(\locring)$-orbit through $t^{\lambda}I_{G}$ on $\mathcal{F}\ell_{G}.$ Thus $L_{t^{\lambda}!}$ is the extension by zero from a connected component of $\mathcal{F}\ell_{P}.$  The map 
$U_{B}t^{\lambda}I_{P}/I_{P}\longrightarrow \flagvar{L}^{t^{\lambda}}$
 is a trivial affine fibration with  affine fibre of dimension 
 $\langle\lambda, 2(\check{\rho}_{G}-\check{\rho}_{L})\rangle$ and the result follows.
 
 \end{proof}

\begin{lemma}
For any $w$  in the finite Weyl group of $L$, we have
$$\mathrm{gRes}(L_{w})\iso L_{w},\hspace{2mm}\mathrm{gRes}(L_{w!})\iso L_{w!},\hspace{2mm}\mathrm{gRes}(L_{w*})\iso L_{w*}.$$
\end{lemma}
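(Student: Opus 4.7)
The plan is to exploit the fact that for $w$ in the finite Weyl group $W_L$, the Schubert variety $\overline{\flagvar{G}^w}$ lies entirely inside the finite-flag piece $G/B \subset \flagvar{G}$, and moreover inside the closed subvariety $P/B \subset G/B$. On this finite piece, the two maps in the definition of $\mathrm{gRes}$ simplify drastically: $\mathfrak{t}_P$ becomes a closed embedding while $\mathfrak{t}_L$ becomes an \emph{isomorphism}. This, in contrast with the $L_{t^{\lambda}!}$ case of the preceding lemma (where $\mathfrak{t}_L$ was a nontrivial affine fibration of relative dimension $\langle\lambda,2(\check{\rho}_G-\check{\rho}_L)\rangle$), should yield the identity functor, without any shift.

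First I would check that $\overline{\flagvar{G}^w} \subset P/B$ for $w \in W_L$. Writing the unipotent radical $U_B = U_L\cdot U$ and using that $W_L$ normalises both $L$ and $U$ (elements of $W_L$ permute the roots of $L$ among themselves, and likewise the roots of $U$), one obtains $wBw^{-1}\subset P$. Since $\flagvar{G}^w = I_GwI_G/I_G = BwB/B \subset G/B$ for $w$ in the finite Weyl group, and the $B$-orbit through $wB$ preserves $P/B$, the whole cell lies in $P/B$. Closure under specialisation then follows from the standard fact that the $W_G$-Bruhat order restricted to $W_L$ agrees with the $W_L$-Bruhat order, so every $v \leq w$ is again in $W_L$.

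Next I would describe $\mathfrak{t}_P$ and $\mathfrak{t}_L$ on the finite parts. The Iwasawa-type equality $G(\locring)\cap P(F) = P(\locring)$ gives $\mathfrak{t}_P^{-1}(G/B) = P/B$ inside $\flagvar{P}$, and on this preimage $\mathfrak{t}_P$ is just the natural closed embedding $P/B\hookrightarrow G/B$. The restriction $\mathfrak{t}_L : P/B \to L/B_L$ is the canonical isomorphism coming from $P(\locring)/I_P = P/B = L/B_L = L(\locring)/I_L$. Hence the partial composition $\mathfrak{t}_L\circ \mathfrak{t}_P^{-1}$ restricts to an isomorphism $\overline{\flagvar{G}^w}\iso \overline{\flagvar{L}^w}$ matching Schubert cells of the same element $w\in W_L$.

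Finally I would insert this into the defining relation $q_L^*\mathrm{gRes}(K)[\dimrel(q_L)] \iso \mathfrak{t}_{L!}\mathfrak{t}_P^*\xi^* K[\dimrel(\xi)]$. Taking $K=L_w$ (resp.\ $L_{w!}$) and using that $\xi^*L_w[\dimrel(\xi)]$ is, up to the equivariance structure, just $L_w$ again, that $\mathfrak{t}_P^*$ along a closed embedding preserves intermediate (resp.\ $!$-) extensions, and that $\mathfrak{t}_{L!}$ along an isomorphism on the support is transport of structure, one reads off that the right-hand side equals $L_w[\dimrel(\xi)]$ (resp.\ $L_{w!}[\dimrel(\xi)]$) on $\overline{\flagvar{L}^w}$. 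Applying the equivalence $q_L^*[\dimrel(q_L)]$ in reverse, we conclude $\mathrm{gRes}(L_w)\iso L_w$ and $\mathrm{gRes}(L_{w!})\iso L_{w!}$, with no leftover shift. The case of $L_{w*}$ is then obtained by Verdier duality from $L_{w!}$, using $\mathbb{D}(L_{w!})=L_{w*}$ and the fact that on the support of our complex $\mathfrak{t}_P$ and $\mathfrak{t}_L$ are so simple that $\mathrm{gRes}$ commutes with $\mathbb{D}$.

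The main technical obstacle is the bookkeeping in step four: one must verify that the shifts $[\dimrel(\xi)]$ and $[\dimrel(q_L)]$ really do cancel, given the stack-quotient formalism $[\Flagstrat/I_{P,s}]\to[\Flagstrat/K_s]$, $[\cdot/I_{P,s}]\to[\cdot/I_{L,s}]$; the crucial input — compared with Lemma~\ref{Llambda} — is that the fibre of $\mathfrak{t}_L$ over the support is now $0$-dimensional rather than an affine space of dimension $\langle\lambda,2(\check{\rho}_G-\check{\rho}_L)\rangle$, which is exactly what makes the shift disappear.
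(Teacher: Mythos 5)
Your proposal is correct and follows essentially the same route as the paper: observe that for $w$ in the finite Weyl group of $L$ the double coset $BwB$ (and the closure $\overline{BwB/B}$) lies in $P$, so $L_{w}$, $L_{w!}$, $L_{w*}$ live on $P/B\iso L/B_{L}$ inside $\flagvar{P}$, where $\mathfrak{t}_{L}$ is an isomorphism and $\mathrm{gRes}$ acts as the identity with no shift. The paper's proof is just a terser version of the same argument, so no changes are needed.
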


\begin{proof}
The $P$-orbit through $I_{P}$ gives a natural  closed subscheme $L/B_{L}\iso P/B\hookrightarrow \flagvar{P}.$
 For any $w$ in the finite Weyl group of $L,$  the double coset $BwB$ is contained in $P.$ Thus $L_{w!}$ initially defined  over $\mathcal{F}\ell_{G}$ is actually an extension by zero from a connected component of $\mathcal{F}\ell_{P}$. Hence $\mathrm{gRes}(L_{w!})\iso L_{w!}.$ Moreover, $\overline{BwB/B}$ in $G/B$ actually lies in $P/B,$ so $\mathrm{gRes}(L_{w})\iso L_{w}.$ The same result holds for $L_{w*}.$
  \end{proof}
 
\begin{theorem}
\label{proppri}
Let $\mathcal{T}$ be a perverse sheaf in $P_{I_{G}}(\mathcal{F}\ell_{G})$ which is an extension by zero from a connected component of $\mathcal{F}\ell_{P}$, and  $\mathcal{K}$ be in $D_{I_{G}}(M(F))$ then we have 
$$J_{P}^{*}\overset{\leftarrow}{H}_{G}(\mathcal{T},\mathcal{K})\iso \overset{\leftarrow}{H}_{L}(\mathrm{gRes}(\mathcal{T}),J_{P}^{*}\mathcal{K})[\langle \lambda, \check{\nu}-\check{\mu} \rangle],$$
where $\lambda$ is the cocharacter  whose image in $\pi_{1}(L)$ is $\theta,$ $\check{\nu}$ is the character by which $L$  acts on $\det(V_{0})$ and $\check{\mu}$ is the character by which $G$ acts on $\det(M_{0})$.
 
\end{theorem}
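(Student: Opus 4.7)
The plan is to compute both sides of the claimed isomorphism using the convolution diagrams for the Hecke functors and identify them via the fact that $U(F)$ acts trivially on $V(F)$. Recall that, for appropriate integers $N,r,s$, the functor $\heckefunc{G}(\mathcal{T},\mathcal{K})$ is computed by pulling $\mathcal{K}$ back along an action map $M(F)\newtimes \overline{\flagvar{G}^w}\to K_{s}\backslash M(F)$, forming the twisted exterior product with $\mathcal{T}$, and pushing forward along the projection $\pi_{G}:M(F)\newtimes \overline{\flagvar{G}^w}\to M(F)$; an analogous diagram with $V(F)$ and $\flagvar{L}$ in place of $M(F)$ and $\flagvar{G}$ defines $\heckefunc{L}$.

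The first step is to exploit the hypothesis that $\mathcal{T}$ is an extension by zero from a connected component of $\flagvar{P}$ via $\mathfrak{t}_{P}$. This lets me rewrite the twisted product for $G$ in terms of the pre-image of $\overline{\flagvar{G}^{w}}$ inside $\flagvar{P}$ with $\mathfrak{t}_{P}^{*}\mathcal{T}$ in place of $\mathcal{T}$. Next, applying $J_{P}^{*}$ amounts to the pullback $(i_{N,r})^{*}$ to $V(F)$ composed with descent by $U$; since the projection $\pi_{G}$ is proper, proper base change allows me to commute $(i_{N,r})^{*}$ with $(\pi_{G})_{!}$. This yields a twisted product over $V(F)$ of a family fibred over a subscheme of $\flagvar{P}$.

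The crucial geometric input is that, since $V_{0}$ is $P$-stable with trivial $U$-action, $U(F)$ acts trivially on $V(F)$. Hence for a pair $(v,gI_{P})$ with $v\in V(F)$, the element $g^{-1}v$ depends only on the image $\bar gI_{L}\in \flagvar{L}$ of $gI_{P}$ under $\mathfrak{t}_{L}$. Descending by $U$ therefore collapses the twisted product over $\flagvar{P}$ to one over $\flagvar{L}$, with the sheaf $\mathfrak{t}_{P}^{*}\mathcal{T}$ replaced by $(\mathfrak{t}_{L})_{!}\mathfrak{t}_{P}^{*}\mathcal{T}$; by the defining isomorphism $q_{L}^{*}\mathrm{gRes}(\mathcal{T})[\dimrel(q_{L})]\iso (\mathfrak{t}_{L})_{!}\mathfrak{t}_{P}^{*}\xi^{*}\mathcal{T}[\dimrel(\xi)]$ this is precisely $\mathrm{gRes}(\mathcal{T})$ up to a shift, and the resulting diagram coincides with the one computing $\heckefunc{L}(\mathrm{gRes}(\mathcal{T}),J_{P}^{*}\mathcal{K})$.

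The principal obstacle is the careful accounting of shifts. The functor $J_{P}^{*}$ introduces the shift $[-ra+\dimrel(p)-\dimrel(q)]$ with $a=\dim M_{0}-\dim V_{0}$, the normalizations of the twisted products for $G$ and $L$ differ by the relative dimensions of the projections to $\flagvar{G}$ and $\flagvar{L}$, and $\mathrm{gRes}$ carries its own shift, already visible in Lemma~\ref{Llambda}. All $r$-dependent terms must cancel since both sides are independent of $r$ after passage to the inductive $2$-limit defining $D_{I_{G}}(M(F))$ and $D_{I_{L}}(V(F))$. The residual finite shift records how $t^{\lambda}$ dilates the determinant lines $\det V_{0}$ and $\det M_{0}$: it contributes $\langle\lambda,\check{\nu}\rangle$ from the action on $V(F)$ and $-\langle\lambda,\check{\mu}\rangle$ from $M(F)$, giving the predicted shift $[\langle\lambda,\check{\nu}-\check{\mu}\rangle]$. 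A sanity check in the special case $P=G$ (where both sides agree trivially) and in the case $V=M$ (where the shift vanishes) confirms the normalization.
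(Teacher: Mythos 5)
Your proposal is correct and follows essentially the same route as the paper: reduce $\heckefunc{G}(\mathcal{T},\mathcal{K})$ to a convolution over $\flagvar{P}$ using that $\mathcal{T}$ is an extension by zero, commute $J_{P}^{*}$ with the proper pushforward, and use the trivial $U$-action on $V$ to descend the twisted product to $\flagvar{L}$, identifying the kernel with $\mathrm{gRes}(\mathcal{T})$ up to a shift. The only (cosmetic) difference is in pinning down the shift: the paper evaluates it on the special case $\mathcal{K}=I_{0}$, $\mathcal{T}=L_{t^{\lambda}!}$ by explicit dimension counts together with Lemma~\ref{Llambda}, whereas you obtain $[\langle\lambda,\check{\nu}-\check{\mu}\rangle]$ by bookkeeping of the normalizations (cancellation of the $r$-dependent terms and of $\langle\lambda,2(\check{\rho}_{G}-\check{\rho}_{L})\rangle$ against the shift in $\mathrm{gRes}$), which amounts to the same computation.
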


\begin{proof}
The connected components of $\mathcal{F}\ell_{P}$ are indexed by $\pi_{1}(L).$ For $\theta$ in $\pi_{1}(L),$ denote by $\mathcal{F}\ell_{P}^{\theta}$ for the corresponding connected component which is the preimage of $\flagvar{L}^{\theta}$ under the map $\mathfrak{t}_L$ defined in $\eqref{tp}$. 
\par\smallskip
Let $s_{1}$ and $s_{2}$ be two non negative  integers and $\mathcal{T}$ be the extension by zero from $\FlagstratP^{\theta}=\FlagstratP\cap \mathcal{F}\ell_{P}^{\theta}.$
For $N+r\geq 0$ and $s\geq \mathrm{max}\{N+r,s_{1}+s_{2}+1\}$ consider the diagram

 \[
\xymatrix @R=2cm @C=1cm{
&V_{N,r}\times\Pstrat \ar[d]^{q_{P}} \ar[r]^{act} &V_{N+s_{1},r-s_{1}} \ar[d]^{q_{U}} \\
V_{N,r} \ar[d]^{i_{N,r}} & V_{N,r}\times \FlagstratP^{\theta} \ar[l]_-{pr} \ar[r]^{act_{q,P}} \ar[d]^{i_{N,r}\times id} & I_{P,s}\backslash V_{N+s_{1},r-s_{1}} \ar[d]^{i_{N+s_{1},r-s_{1}}}\\
M_{N,r} \ar[d] & M_{N,r}\times \FlagstratP^{\theta} \ar[l]_-{pr} \ar[r]^{act_{q,P}}\ar[d] & I_{P,s}\backslash M_{N+s_{1},r-s_{1}} \ar[d]^{p}\\
M_{N,r}& M_{N,r}\times \Flagstrat^{\theta} \ar[l]_-{pr} \ar[r]^{act_{q}}& K_{s}\backslash M_{N+s_{1},r-s_{1}},
}
\]
where  the map $act$ sends $(m,p)$ to $p^{-1}m$, the map $q_{P}$ sends $(m,p)$ to $(m,pI_{P})$, and $q_{U}$ is the stack quotient under the action of $I_{P,s}.$

Moreover, the second line of this diagram fits in the following diagram
 \[
\xymatrix @R=2cm @C=1cm{
V_{N,r}\times\FlagstratP^{\theta}\ar[r]^{act_{q,P}} \ar[d]^{id\times \mathfrak{t}_{L}} & I_{P,s}\backslash V_{N+s_{1},r-s_{1}} \ar[d]^{q}\\
V_{N,r}\times \FlagstratL^{\theta}\ar[r]^{act_{q,P}} & I_{L,s}\backslash V_{N+s_{1},r-s_{1}}.
}
\]

At the level of reduced ind-schemes the map $\FlagstratP^{\theta}\longrightarrow \flagvar{G}$ is a locally closed embedding, thus the perverse sheaf $\mathcal{T}$ may be viewed as a complex over $\FlagstratP^{\theta}$. For a given $\mathcal{K}$ and  large enough $N,r,$ by definition, up to a shift independent of $\mathcal{K}$ and $\mathcal{T}$  we have
$$\heckefunc{G}(\mathcal{T},K)\iso pr_{!}(act^{*}_{q,P}(K)\otimes pr_{2}^{*}(\mathcal{T})),$$
where $pr:K_{s}\backslash (M_{N,r}\newtimes \Flagstrat)\longrightarrow K_{s}\backslash \Flagstrat$ is defined in Appendix $\ref{Appendix}$. Thus by definition of $\mathrm{gRes}$ and $J_{P}^{*}$ and the commutativity of the  diagram above, we get 
$$J_{P}^{*}\overset{\leftarrow}{H}_{G}(\mathcal{T},\mathcal{K})\iso \overset{\leftarrow}{H}_{L}(\mathrm{gRes}(\mathcal{T}),J_{P}^{*}\mathcal{K})[?],$$

To determine the shift, one may consider the following special case where $\mathcal{K}$ is the constant perverse sheaf $I_{0}$ on $M$ and $\mathcal{T}$ equals $L_{t^{\lambda!}},$ where  $\lambda$ is a dominant cocharacter of $G$.  For  $N,r$ large enough, we have the following diagram 
\begin{equation} 
\label{40}
\xymatrix {
M_{N,r} & M_{0,r}\newtimes\flagvar{P}^{t^{\lambda}} \ar[l]_{\alpha_{M}}\\
V_{N,r}\ar[u]^{i_{N,r}} & V_{0,r}\newtimes\flagvar{P}^{t^{\lambda}} \ar[u] \ar[l]_{\alpha_{V}}.
}
\end{equation}

 Remind that $\flagvar{P}^{t^{\lambda}}$ is the $U_{B}(\locring)$-orbit through 
 $t^{\lambda} I_{P}$ in $\flagvar{P}.$ The scheme 
 $M_{0,r}\newtimes\flagvar{P}^{t^{\lambda}}$ is the scheme classifying  pairs 
 $(gI_{P},m)$, where $gI_{P}$ lies in $\flagvar{P}$ and  $m$ is an element of $gM/t^{r}M.$ Similarly 
 the scheme $V_{0,r}\newtimes \flagvar{P}{t^{\lambda}}$ is the scheme classifying  pairs 
 $(gI_{P},v),$ where $gI_{P}$ lies in $\flagvar{P}^{t^{\lambda}}$ and $v$  is an element of  $gV/t^{r}V.$  For large enough $r,$ we have $gV\cap t^{r}M =t^{r}V.$ So the right 
 vertical arrow in Diagram $\eqref{40}$ is a closed immersion. Denote by $\mathrm{IC}$ 
 the IC-sheaf of $M_{0,r}\newtimes \flagvar{P}^{t^{\lambda}}.$
We have canonically 
$$\overset{\leftarrow}H_{G}(L_{t^{\lambda}!},I_{0})\iso \alpha_{M!}(\mathrm{IC}),$$
and additionally
$$\dim (M_{0,r}\newtimes \flagvar{P}^{t^{\lambda}})=\langle\lambda, 2\check{\rho}_{G}\rangle+r\dim M_{0}-\langle\lambda,\check{\mu}\rangle,$$ 

 Hence we have 
\begin{align}
\label{iso1}
 J_{P}^{*}\overset{\leftarrow}{H}_{G}(L_{t^{\lambda}!},I_{0})\iso & i_{N,r}^{*}\alpha_{M!}(\mathrm{IC})[-ra]\nonumber \\
  \iso & \alpha_{V!}\qelbar[\langle\lambda, 2\check{\rho}_{G}\rangle+r\dim V_{0}-\langle\lambda,\check{\mu}\rangle].
\end{align}

  The map $\alpha_{V}$ factors through 
  \begin{equation}
  \label{iso3}
  V_{0,r}\newtimes \mathcal{F}\ell_{P}^{t^{\lambda}}\longrightarrow V_{0,r}\newtimes \mathcal{F}\ell_{L}^{t^{\lambda}}\overset{\alpha_{L}}{\longrightarrow}V_{N,r},
  \end{equation}
where  the first map is a trivial affine fibration with an  affine fibre of dimension  $\langle\lambda,2(\check{\rho}_{G}-\check{\rho}_{L})\rangle.$
 This gives us 
$$
  \dim (V_{0,r}\newtimes \mathcal{F}\ell_{L}^{t^{\lambda}})=\langle\lambda,2\check{\rho}_{L}\rangle+r\dim V_{0}-\langle\lambda,\check{\nu}\rangle.$$

 By definition we have 
  \begin{equation}
  \label{iso2}
  \overset{\leftarrow }{H}_{L}(L_{t^{\lambda}!},I_{0})\iso \alpha_{L!}(\mathrm{IC}).
  \end{equation}
This gives us the desired shift as follows
\begin{align}
 J_{P}^{*}\overset{\leftarrow}{H}_{G}(L_{t^{\lambda}!},I_{0})  \iso &   \alpha_{V!}\qelbar[\langle\lambda, 2\check{\rho}_{G}\rangle+r\dim V_{0}-\langle\lambda,\check{\mu}\rangle]\\
  	\iso &  \alpha_{L!}\qelbar[\langle\lambda,\check{\nu}-\check{\mu}-2(\check{\rho}_{G}-\check{\rho}_{L})\rangle]\nonumber\\
  	\iso & \overset{\leftarrow}{H}_{L}(L_{t^{\lambda}!},I_{0})[\langle\lambda,\check{\nu}-\check{\mu}-2(\check{\rho}_{G}-\check{\rho}_{L})\rangle]\nonumber\\
  \iso & \overset{\leftarrow}{H}_{L}(\mathrm{gRes}(L_{t^{\lambda}!}),I_{0})[\langle\lambda,\check{\nu}-\check{\mu}\rangle]\nonumber,
 \end{align}
where the first isomorphism is due to $\eqref{iso1},$ the second is due to $\eqref{iso3},$  the third is due to $\eqref{iso2}$ and the last one is due to Lemma $\ref{Llambda}.$
This shift is compatible with \cite[Lemma 5]{Lysenko1}.
\end{proof}

Let $\delta_{U}:\mathbb{G}_{\m}\times M_{0}\to M_{0}$ be a linear action, whose fixed points set is $V_{0}.$ Assume $\delta_{U}$ contracts $M_{0}$ onto $V_{0}$. Let $r$ be a integer, denote by $\nu:\mathbb{G}_{m}\to L$ the cocharacter of the center of $L$ acting on $V$ by $x\to x^{r}$.
Now consider the case where  $\delta_{U}$ is the map sending $x$ in $\mathbb{G}_{\m}$ to $\nu(x)x^{-r}$.  For any $x$ in $\mathbb{G}_{\m}$ and $m$ in $M_{N,r}$ consider the action of $\mathbb{G}_{\m}$ on $M_{N,r}$ defined by $(x,m)=xm.$ Let $K$ be a $\mathbb{G}_{\m}$-equivariant perverse sheaf in $P_{I_{G}}(M(F))$ with respect to this action of $\G_{m}$ on $M_{N,r}.$ Then for any $w$ in $\weyl{G}$,  both $K$ and $\heckefunc{G}(L_{w},K)$ are $\mathbb{G}_{\m}$-equivariant with respect to the $\delta_{U}$-action on $M(F).$ We get a new version of Proposition $\ref{proppri}$ as follows: 
\begin{corollary}
\label{hyploc}
Let $K$ be a $\mathbb{G}_{\m}$-equivariant perverse sheaf in $P_{I_{G}}(M(F))$ for the $\delta_{U}$-action on $M_{N,r},$ for $N,r$ large enough. Assume that $\K$ admits a $\K^{'}$-structure for some finite subfield $\K^{'}$ of $\K$,
and as such is pure of weight zero. Then $J_{P}^{*}(K)$ is pure of weight zero.
\end{corollary}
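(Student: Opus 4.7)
The plan is to reduce the statement to Braden's hyperbolic localization theorem \cite{Braden}. Unravelling the definition of $J_P^*$, the functor $K\mapsto J_{P,N,r}^*(K)$ is, up to a cohomological shift by $-ra+\dim.\mathrm{rel}(p)-\dim.\mathrm{rel}(q)$ and the passage through the equivalence of categories $q^*[\dim.\mathrm{rel}(q)]$, nothing but the $*$-restriction $(i_{N,r})^*$ along the closed embedding $i_{N,r}:V_{N,r}\hookrightarrow M_{N,r}$. Since cohomological shifts do not affect weights, the question reduces to proving that $(i_{N,r})^*K$ is pure of weight zero for the pullback of $K$ to $M_{N,r}$.

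Next I would exploit the $\mathbb{G}_{\m}$-equivariance. By assumption, the action $\delta_U$ contracts $M_0$ onto its fixed point set $V_0$; consequently the induced action on $M_{N,r}$ contracts $M_{N,r}$ onto $V_{N,r}=(M_{N,r})^{\mathbb{G}_{\m}}$ (for $N,r$ large enough, the contraction is compatible with passage to the quotient because $\delta_U$ preserves the lattices $t^{-N}M$ and $t^rM$). In Braden's setup \cite{Braden}, for a $\mathbb{G}_{\m}$-equivariant constructible sheaf on a scheme with a contracting $\mathbb{G}_{\m}$-action, the attractor/repeller functors coincide with $i^*$ and $i^!$ respectively, where $i$ is the inclusion of the fixed locus. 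Combined with the fact that here the whole space is its own attractor, this gives a canonical isomorphism $(i_{N,r})^*K\iso (i_{N,r})^!K[2a](a)$, where $a$ is the codimension of $V_{N,r}$ in $M_{N,r}$.

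Braden's theorem (in the form stated for instance in \cite[\S 3]{Braden}, or in its modern weight-theoretic reformulation) then provides the crucial property: the hyperbolic localization functor preserves purity and weight. Explicitly, because $K$ is $\mathbb{G}_{\m}$-equivariant, pure of weight zero, and defined over the finite field $\K^{'}$, the object $(i_{N,r})^*K$ is also pure of weight zero. Transporting this back via $q^*[\dim.\mathrm{rel}(q)]$ (which is exact for the perverse $t$-structure and evidently preserves weights) yields that $J_{P,N,r}^*(K)$ is pure of weight zero. Passing to the $2$-colimit defining $J_P^*$ preserves this property, and we obtain the claim.

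The main obstacle in carrying this out rigorously is verifying that Braden's theorem applies \emph{equivariantly} in the Iwahori-stack setup used here: one must check that the $\mathbb{G}_{\m}$-action $\delta_U$ descends to the stack quotients $I_{P,s}\backslash M_{N+s_1,r-s_1}$ and $I_{L,s}\backslash V_{N+s_1,r-s_1}$ and that it commutes appropriately with the maps $p,q,i_{N,r}$ used to define $J_P^*$. This is essentially an exercise once one notes that $\delta_U$ normalizes $I_{P,s}$ (because $V_0$ is $P$-stable and the cocharacter $\nu$ lies in the center of $L$, hence in the normalizer of $I_P$), so all the relevant equivariances descend and Braden's theorem applies uniformly.
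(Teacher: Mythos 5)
Your overall strategy is the one the paper intends: the corollary is stated without a detailed argument, as the Iwahori analogue of \cite[Corollary 3]{Lysenko1}, and the intended mechanism is exactly what you describe — unwind $J_P^*$ to the $*$-restriction $i_{N,r}^*$ along $V_{N,r}\hookrightarrow M_{N,r}$ (the smooth pullbacks $p^*$, $q^*[\dimrel(q)]$ being weight-exact up to the suppressed twists), and then use $\mathbb{G}_m$-equivariance for the contracting action $\delta_U$ together with Braden \cite{Braden}. Your remarks on descending the $\mathbb{G}_m$-action to the quotients by $I_{P,s}$, $I_{L,s}$ are also fine. However, the pivotal intermediate claim is false as stated: there is no canonical isomorphism $(i_{N,r})^*K\iso (i_{N,r})^!K[2a](a)$ for a general equivariant pure perverse sheaf. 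For instance, take $M_0=\mathbb{A}^3$ with coordinates $(x,y,z)$, the action $t\cdot(x,y,z)=(x,ty,tz)$ contracting onto $V_0=\{y=z=0\}$ (so $a=2$), and $K=\IC(\{z=0\})=\qelbar[2](1)$ on the $\mathbb{G}_m$-stable plane $\{z=0\}$: then $i^*K=\qelbar_{V_0}[2](1)$ while $i^!K[2a](a)=\qelbar_{V_0}[4](2)$. The correct output of the contraction principle (equivalently, Braden's theorem in this degenerate attractor-equals-everything situation, applied also to the inverse action) is the pair of isomorphisms $i^*K\iso \pi_*K$ and $i^!K\iso \pi_!K$, where $\pi\colon M_{N,r}\to V_{N,r}$ is the contraction map; these identify $i^*K$ with a $*$-pushforward, not with a shifted-twisted $i^!K$.

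Once the false isomorphism is replaced by $i^*K\iso\pi_*K$, the purity argument you gesture at (and attribute wholesale to "Braden preserves purity") should be spelled out, since that is the actual content: $i^*$ preserves the condition ``weights $\leq 0$'' while $\pi_*$ preserves ``weights $\geq 0$'', so for $K$ pure of weight zero and weakly equivariant the isomorphism forces $i^*K$, hence $J_{P,N,r}^*(K)$, to be pure of weight zero; this is precisely the argument behind \cite[Corollary 3]{Lysenko1} that the corollary imports. Two minor points: ``cohomological shifts do not affect weights'' is literally false — shifts preserve purity but change the weight, so to land exactly in weight zero one must carry the half Tate twists along with the shifts in the normalization of $J_P^*$ (the paper suppresses twists, so this is cosmetic); and the phrase ``the attractor/repeller functors coincide with $i^*$ and $i^!$'' conflates $\pi_!$ with $\pi_*$ — the attractor functor here is $\pi_!\iso i^!$, and it is the inverse action (or the contraction lemma directly) that produces $i^*\iso\pi_*$. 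With these corrections your proof is complete and agrees with the paper's intended route.
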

 This is an analogue of \cite[Corollary 3]{Lysenko1} in the Iwahori case affirming that the geometric Jacquet functors preserve the pure preserve  sheaves of weight zero. 
 
 \section{Appendix A}
 \label{Appendix}
 The construction of Hecke functors has been done in \cite[\S 3]{BFH2}. We will recall here its main lines for sake of completeness. Let $G$ be a split connected reductive group over $\K$. Let $T_{G}$ be the maximal standard torus and $B_{G}$ be  the standard Borel subgroup containing $T_{G}$ in $G$. Denote by $I_{G}$ the corresponding Iwahori subgroup. Let $M_{0}$ be a  faithful finite-dimensional representation of $G$ and let $M=M_{0}\otimes _{\K}\mathcal{O}$. The definitions of the derived category  $D_{I_{G}}(M(F))$ of $\ell$-adic sheaves on $M(F)$ and the category $P_{I_{G}}(M(F))$ of $\ell$-adic perverse sheaves on $M(F)$ are given in \cite[\S $3$]{BFH2}.  For any two integers $N,r\geq 0$ with $N+r>0,$  set $M_{N,r}=t^ {-N}M/t^{r}M.$ The subgroup $G(\locring)$  acts on $M_{N,r}$ via its finite dimensional quotient $G(\locring/t^{N+r}\mathcal{O}).$ Denote by $I_{s}$ the kernel of the map $G(\locring)\longrightarrow G(\locring/t^ {s}\locring).$ The Iwahori subgroup $I_G$ acts on $M_{N,r}$ via its finite-dimensional quotient $I_G/I_{N+r}$. For $s>0$ denote by $K_s$ the quotient $I_G/I_{s}.$
 Let $s_1,s_2\geq 0$ and set 
\begin{equation}
\label{gstrat}
\Gstrat=\{ g\in G(F) \vert \hspace{2mm} t^{s_{1}}M\subset{gM}\subset{t^{-s_{2}}M} \}.
\end{equation}
Then $\Gstrat\subset G(F)$ is closed and stable under the left and right multiplication by $G(\locring).$ 
Further, $\Flagstrat=\Gstrat/ I_{G}$ is closed in $\flagvar{G}.$ For $s^{'}_{1}\geq s_{1}$ and $s^{'}_{2}\geq s_{2},$
 we have the closed embeddings ${}_{s_{1},s_{2}}\mathcal{F}\ell_{G}\hookrightarrow{{}_{s^{'}_{1},s_{2}^{'}}\mathcal{F}\ell_{G}}$
and the union of ${}_{s_{1},s_{2}}\mathcal{F}\ell_{G}$ is the affine flag variety $\flagvar{G}.$  The map sending $g$ to $g^{-1}$ yields an isomorphism between ${{}_{{}_{s_{1},s_{2}}}G(F)}$ and ${{}_{{}_{s_{2},s_{1}}}G(F)}.$ Denote by $\check\mu$ in $\check{X}^{+}$ the character by which $G$ acts on $\det(M_{0}).$  The connected components of the affine Grassmannian $Gr_{G}$ are indexed by the algebraic fundamental group $\pi_{1}(G)$ of $G$. For $\theta$  a cocharacter in $\pi_{1}(G)$, choose $\lambda$ in $X^{+}$ whose image in $\pi_{1}(G)$ equals $\theta.$ Denote by $Gr_{G}^{\theta}$ the connected component of $Gr_G$ containing $Gr_{G}^{\lambda}$. The affine flag manifold $\flagvar{G}$ is a fibration over $Gr_{G}$ with the typical fibre $G/B.$ Hence the connected components of the affine flag variety $\flagvar{G}$ are also indexed  by $\pi_{1}(G).$ For $\theta$ in $\pi_{1}(G),$ denote by $\flagvar{G}^{\theta}$ the preimage of $Gr_G^{\theta}$ in $\flagvar{G}$. Set $\Flagstrat^{\theta}=\flagvar{G}^{\theta}\cap{\Flagstrat}.$ According to \cite[Lemma 4.2]{BFH2}, There exists  an inverse image functor 
$$act_{q}^{*}:D_{I_G}(M(F))\times D_{I_G}(\flagvar{G}) \longrightarrow D_{I_G}(M(F)\times \flagvar{G})$$
which preserves perversity and is compatible with the Verdier duality in the following way:
for any $\mathcal{K}$ in $D_{I_{G}}(M(F))$ and $\mathcal{F}$ in $D_{I_{G}}(\flagvar{G})$ we have  $$\mathbb{D}(act_{q}^{*}(\mathcal{K},\mathcal{T}))\iso act_{q}^{*}(\mathbb{D}(\mathcal{K}),\mathbb{D}(\mathcal{T})).$$

Given $N,r,s_{1},s_{2}\geq 0$ with $r\geq{s_{1}}$ and $s\geq{\max\{N+r,s_{1}+s_{2}+1\}},$
one can define the following commutative diagram 
\begin{center}
\[
\xymatrix @R=2cm{
&& {M}_{{}_{N,r}}\times \Gstrat \ar[rr]^{act} \ar[d]_{q_G} && M_{{}_{N+s_{1},r-s_{1}}}\ar[d]^{q_{M}} \\
  {M}_{{}_{N,r}} \ar[d] &&  {M}_{{}_{N,r}}\times{\Flagstrat} \ar[d]  \ar[ll]_-{pr_{1}} \ar[rr]^{act_{q}}   && K_{s}\backslash{M_{N+s_{1},r-s_{1}}} \\
K_{s}\backslash {M}_{{}_{N,r}}  && K_s\backslash({{M}_{{}_{N,r}}\times\Flagstrat)}  \ar[ll]_-{pr} \ar[rru]^{act_{q,s}} \ar[rr] ^{pr_{2}} && K_s\backslash\hspace{1mm}(\Flagstrat)}
\]
\end{center}

The action map $act$ sends the couple $(v,g)$ to $g^{-1}v$. The maps  $pr_{1}$, $pr_2$ and $pr$ are projections. The map $q_{G}$ sends the couple $(v,g)$ to $(v,g{I_{G}}).$ All the vertical arrows are stack quotients for the action of the corresponding group. The group $K_{s}$ acts diagonally on $M_{N,r}\times \Flagstrat$ and the map $act_{q}$ is equivariant with respect to this action. This functor sends $(\mathcal{K},\mathcal{T})$ to 
$$K\tilde{\boxtimes}\mathcal{T}:=(act_{q,s}^{*}\mathcal{K})\otimes{pr_{2}^{*}}\mathcal{T}[\dim(K_s)-c+s_1\dim M_0]
$$
where $c$ equals $\langle\theta,\check\mu\rangle$ over $\Flagstrat^{\theta}.$  

For any $N,r,s_{1},s_{2}$ greater than  zero satisfying the condition $s\geq{\max\{N+r,s_{1}+s_{2}+1\}},$ consider the projection 
$$pr:K_s\backslash{(M_{N,r}\times{\Flagstrat})}\longrightarrow K_s\backslash M_{N,r}$$.

For any $\mathcal{K}$ in $D_{I_{{}_{G}}}(M(F))$ and $\mathcal{T}$ in $D_{I_{{}_{G}}}(\flagvar{G})$, the Hecke functor 
$$\overset{\leftarrow}{H_{G}}(\, ,\, ):D_{I_{G}}(\flagvar{G})\times D_{I_{G}\times I_{H}}(\Pi(F))\to D_{I_{G}\times I_{H}}(\Pi(F))$$  

is defined by 
$$
\heckefunc{G}(\mathcal{T}, \mathcal{K})=pr_{!}((K\tilde{\boxtimes}\mathcal{T}))
$$
Moreover, this functor is compatible with the convolution product on $D_{I_{G}}(\flagvar{G})$. Namely, given $\mathcal{T}_1, \mathcal{T}_2$ in $D_{I_G}(\flagvar{G})$ and $\mathcal{K}$ in $D_{I_G}(M(F))$, one has naturally
$$
\heckefunc{G}(\mathcal{T}_1, \heckefunc{G}(\mathcal{T}_2, \mathcal{K}))\iso \heckefunc{G}(\mathcal{T}_1\star  \mathcal{T}_2, \mathcal{K}).
$$ 

\paragraph{\textbf{An example of computation of Hecke functors }}
\label{Example}
Let $R,r\geq 0$ and $t^rM\subset V\subset t^{-R}M$ be an intermediate lattice stable under $I_G.$
Let $K\in{P_{I_{G}}}(M_{R,r})$ be a shifted local system on $V/t^{r}M\subset t^{-R}M/t^{r}M.$ We are going to explain the above construction explicitly in this case. Let $\mathcal{T}$ be in $D_{I_{G}}(\Flagstrat).$  Choose $r_{1}\geq r+s_{1}.$ If $g$ is a point in $\Flagstrat$ then $t^{r_{1}}M\subset gV.$ So we can define the scheme  $(V/t^ {r}M)\tilde{\times}\Flagstrat$
as  the scheme classifying pairs $(g\I{G},m)$ such that $gI_{G}$ is an element of $\Flagstrat$ and $m$ is in $(gV)/(t^{r_1}M).$ For a point $(m,g)$ of this scheme we have $g^ {-1}m$  in $V/t^{r}M.$ Assuming $s\geq R+r$ we get the digram 
$$M_{R+s_{2},r_{1}}\overset{p}{\longleftarrow}(V/t^{r}M)\tilde{\times}{\Flagstrat}\overset{act_{q,s}}{\longrightarrow} K_{s}\backslash (V/t^{r}M),$$
where $p$ is the map sending $(g\I{G},m)$ to $m$. For $gG(\locring)$ in ${Gr^{\theta}_G},$  the virtual dimension of $V/gV$ is $\langle\theta,\check{\mu}\rangle.$ The space $(V/t^ {r}M)\tilde{\times}\Flagstrat^{\theta}$ is locally trivial fibration over $\Flagstrat^{\theta}$ with fibre isomorphic to an affine space of dimension $\dim(V/t^{r_1}M)-\langle\theta,\check{\mu}\rangle.$ Since $K$ is a shifted local system, the tensor product 
$act_{q,s}^ {*}K \otimes pr_{2}^{*}\mathcal{T}$ is a shifted perverse sheaf. Let $K\tilde{\boxtimes}\mathcal{T}$ be the perverse sheaf $act_{q,s}^{*}K\otimes pr_{2}^{*}\mathcal{T}[\dim].$ The shift $[\mathrm{dim}]$ in the definition depends on the dimension of the connected component  and hence on $\check{\mu}$ as explained above and is such that the sheaf $act_{q,s}^{*}K\otimes pr_{2}^{*}\mathcal{T}[\dim]$ is perverse. Then  
$
\heckefunc{G} (\mathcal{T},K)=p_{!}(K\tilde{\boxtimes}\mathcal{T}).
$

\section{Appendix B}
\label{Appendix2}
The aim of this appendix is to compute the complex $\heckefunc{H}(L_{\tau},\mcI^{\mu!})$ in the category $D_{I_{H}\times I_{G}}(\Pi(F))$ for any $L_{\tau}$ in $P_{I_{H}}(\flagvar{H})$. We will first consider the case of $\tau$ being a simple reflection the finite Weyl group of $H$ then for $\tau$ being the unique simple affine reflection in $\weyl{H}$. The action of length zero elements being obvious, this completes the action of simple objects of $P_{I_{H}}(\flagvar{H})$ on $\mathcal{I}^{\mu}$ in the derived category $D_{I_{H}\times I_{G}}(\Pi(F))$. For any point $hI_{H}$ in $\overline{\flagvar{H}^{\tau}}$ we write $U^{'}_{i}=hU_{i}$ and we fix a complete flag $U_{1}^{'}\subset \dots \subset U_{m}^{'}$ on $U^{'}/tU^{'}.$ Let  $\overline{\Pi}_{1}^{\mu}\newtimes \overline{\flagvar{H}^{\tau}}$ be the scheme classifying  pairs $(v,hI_{H}),$ where  $hI_{H}$ is  in $\overline{\flagvar{H}^{\tau}}$ and a $v$ is a map from $L^{*}$ to $U^{'}/tU_{'}$ such that $v(e_{i}^{*})\in U_{\nu(i)}^{'}$ for all $i=1,\dots,n.$ Let 
\begin{equation}
\label{ferme}
\pi: \overline{\Pi}_{1}^{\mu}\newtimes \overline{\flagvar{H}^{\tau}}\to \Pi_{0,1}
\end{equation}
be the map sending $(v,hI_{H})$ to $v$. By definition we have 
$$\heckefunc{H}(L_{\tau},\mcI^{\mu})\iso \pi_{!}(\qelbar\bt L_{\tau}).$$

Let $\Pi_{1}^{\mu}\newtimes \flagvar{H}^{\tau}$ be the open subscheme in  $\overline{\Pi}_{1}^{\mu}\newtimes \overline{\flagvar{H}^{\tau}}$ consisting of pairs $(v,hI_{H})$ in $\Pi_{1}^{\mu}\newtimes \flagvar{H}^{\tau}$ such that $v(e_{i}^{*})\notin U_{\nu(i)-1}^{'}$ for all $i=1,\dots,n$. If $\pi^{0}$ is the restriction of $\pi$ to the open subscheme $ \Pi_{1}^{\mu}\newtimes \flagvar{H}^{\tau},$ then we have 
$$\heckefunc{H}(L_{\tau!},\mcI^{\mu !})\iso \pi_{!}^{0}(\qelbar\bt L_{\tau}).$$

For  $1\leq i<m$ we will denote by $\tau_{i}$ the simple reflection  $(i,i+1)$ in $W_{H}.$
\begin{proposition}
\label{gorthen0}
Let $i$ be an integer such that  $1\leq i< m$
 \par\smallskip
\begin{enumerate}
\item If $i\notin{I_{\mu}}$ then the complex $\overset{\leftarrow}{H}_{H}(L_{\tau_{i}},\mathcal{I}^{\mu})$ is canonically isomorphic to  $\mathcal{I}^{\mu}\otimes \mathrm{R}\Gamma(\mathbb{P}^{1},\qelbar)[1].$ 
\item If $i\in{I_{\mu}}$ then 
\[\overset{\leftarrow}{H}_{H}(L_{\tau_{i}},\mathcal{I}^{\mu})\iso \left\lbrace
\begin{array}{rcr}
           \mathcal{I}^{\tau_{i}\circ \nu}\oplus \mathcal{I}^{\tau_{i-1}\circ \nu} & \qquad \mathrm{if}\quad i>1\hspace{2mm} and \hspace{2mm} i-1\notin{I_{\mu}}\\
            \mathcal{I}^{\tau_{i}\circ \nu}\oplus \mathrm{\IC}(Y^{''})& \qquad otherwise,\\
\end{array}
\right.\]
\end{enumerate}
where $Y^{''}$ is a specific locally closed subscheme of $\Pi_{0,1}$ (whose construction will be given in the proof below).
\end{proposition}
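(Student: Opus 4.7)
The plan is to unfold the definition of $\heckefunc{H}(L_{\tau_i},\mathcal{I}^\mu)=\pi_!(\qelbar\bt L_{\tau_i})$ using the proper map $\pi$ of $\eqref{ferme}$, and to describe its image and fibres geometrically. First identify $\overline{\flagvar{H}^{\tau_i}}$ with the projective line $\mathbb{P}(U_{i+1}/U_{i-1})$: a point $hI_H$ is the datum of a hyperplane $U'_i$ with $U_{i-1}\subset U'_i\subset U_{i+1}$, the other $U'_j$ being equal to $U_j$. Consequently $\overline{\Pi}_1^\mu\newtimes\overline{\flagvar{H}^{\tau_i}}$ is the scheme parametrizing pairs $(v,U'_i)$ such that $v(e_j^*)\in U'_{\nu(j)}$ for all $j=1,\dots,n$.

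For part (1), assume $i\notin I_\mu$. Then $\nu(j)\neq i$ for every $j$, so $U'_{\nu(j)}=U_{\nu(j)}$ and the defining conditions on $v$ are independent of $U'_i$. Hence the scheme splits as a direct product $\overline{\Pi}_1^\mu\times\overline{\flagvar{H}^{\tau_i}}$ and $\pi$ is the first projection. By the K\"unneth formula one obtains $\pi_!(\qelbar\bt L_{\tau_i})\iso \mathcal{I}^\mu\otimes \mathrm{R}\Gamma(\mathbb{P}^1,\qelbar)[1]$.

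For part (2), assume $i\in I_\mu$ and set $j_0=\mu(i)$, so the only condition involving $U'_i$ is $v(e_{j_0}^*)\in U'_i$. A direct analysis of $\pi$ then shows that its image $Y$ is the locally closed subscheme of $\Pi_{0,1}$ cut out by $v(e_j^*)\in U_{\nu(j)}$ for $j\neq j_0$ and $v(e_{j_0}^*)\in U_{i+1}$. Let $Y'\subset Y$ be the closed subscheme defined by the extra condition $v(e_{j_0}^*)\in U_{i-1}$. Over a point $v\in Y\setminus Y'$, the hyperplane $U'_i$ is forced to equal $U_{i-1}+\K\cdot v(e_{j_0}^*)$, so $\pi$ is an isomorphism there, whereas over $v\in Y'$ the fibre is the whole $\mathbb{P}(U_{i+1}/U_{i-1})\iso\mathbb{P}^1$. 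One checks that $Y\setminus Y'$ is exactly the orbit $\Pi_{0,1}^{\tau_i\circ\nu}$, and a dimension count shows that $Y'$ has codimension $2$ in $Y$ precisely when $i>1$ and $i-1\notin I_\mu$ (otherwise the codimension drops to one). Hence in every case $\pi$ is semi-small, $L_{\tau_i}$ is $\tau_i$-equivariant with respect to the natural $\Gm$-action contracting $\mathbb{P}^1$, and the decomposition theorem of Beilinson--Bernstein--Deligne yields a canonical splitting
$$
\pi_!(\qelbar\bt L_{\tau_i})\iso \mathcal{I}^{\tau_i\circ\nu}\oplus \mathrm{IC}(\overline{Y'})
$$
with no shifts.

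It remains to identify $\mathrm{IC}(\overline{Y'})$. When $i>1$ and $i-1\notin I_\mu$, the scheme $Y'$ is, by inspection of the conditions, exactly the $I_H\times I_G$-orbit $\Pi_{0,1}^{\tau_{i-1}\circ\nu}$, and the corresponding summand is $\mathcal{I}^{\tau_{i-1}\circ\nu}$. In the remaining cases $Y'$ is no longer a single orbit and we denote the resulting IC-sheaf by $\mathrm{IC}(Y'')$ where $Y''=Y'$ is the locally closed subscheme just described. The main obstacle is the codimension computation for $Y'\subset Y$, which determines whether $\pi$ is semi-small and rules out extra cohomological shifts in the decomposition; once this is in hand the stratum identification and the K\"unneth computation are routine.
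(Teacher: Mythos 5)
Your strategy for both parts is the one the paper uses: part (1) via the product/K\"unneth observation (the paper phrases it as $p_{H!}(L_{\tau_i})\iso \mathrm{R}\Gamma(\mathbb{P}^1,\qelbar)[1]$, which is the same point), and part (2) by unfolding $\pi$, identifying its image, noting the fibres are a point off a smaller locus and $\mathbb{P}^1$ on it, splitting the pushforward and naming the summands. However, two steps in part (2) are wrong as written. First, the codimension claim: inside your $Y$, the locus $Y'$ is cut out by the single extra condition $v(e_{j_0}^*)\in U_{i-1}$, and since $U_{i-1}$ has codimension $2$ in $U_{i+1}$ these are two linear conditions, so $Y'$ has codimension exactly $2$ in $Y$ in \emph{all} cases; it never drops to $1$ when $i=1$ or $i-1\in I_{\mu}$. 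Your formulation is also internally inconsistent: if the codimension were $1$ somewhere, a proper map with $\mathbb{P}^1$-fibres over that locus would not be semi-small, contradicting your next sentence. The hypothesis $i>1$ and $i-1\notin I_{\mu}$ has nothing to do with semi-smallness or shifts; it only governs whether $Y'$ is an orbit closure, i.e.\ whether $\IC(Y')$ is one of the simple objects, namely $\mathcal{I}^{\tau_{i-1}\circ\nu}$ --- which is exactly how it enters the statement. Fortunately the truth (codimension always $2$, with $Y$ and $Y'$ affine spaces) is what makes the unshifted splitting $\pi_!(\qelbar\bt L_{\tau_i})\iso \IC(Y)\oplus\IC(Y')$ valid in every case, so the conclusion survives, but the justification needs to be corrected.

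Second, the identification $\IC(Y)\iso\mathcal{I}^{\tau_i\circ\nu}$ is justified by the claim that $Y\setminus Y'$ \emph{is} the orbit $\Pi_{0,1}^{\tau_i\circ\nu}$. That is false: $Y\setminus Y'$ is an open subset of the affine space $Y$ and contains smaller orbits as well (for instance points with $v(e_j^*)\in U_{\nu(j)-1}$ for some $j\neq j_0$). What is actually needed is that $\Pi_{0,1}^{\tau_i\circ\nu}$ is dense in $Y$, and this is where the paper's proof does its real work: it distinguishes the case $i+1\notin I_{\mu}$, where $Y$ is the orbit closure attached to the strictly decreasing map with image $(I_{\mu}\setminus\{i\})\cup\{i+1\}$, from the case $i+1\in I_{\mu}$, where $\tau_i\circ\nu$ has image $I_{\mu}$ but is no longer decreasing and one must check it still labels the dense $I_H\times I_G$-orbit in $Y$. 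Your proposal replaces this verification by an incorrect assertion, so the key identification of the main summand is left without proof. With the codimension statement fixed and this orbit-density check supplied (plus the easy identification of $Y'$ with $\overline{\Pi_{0,1}^{\tau_{i-1}\circ\nu}}$ when $i>1$, $i-1\notin I_{\mu}$), your argument coincides with the paper's.
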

  
  \begin{proof} 
  The scheme $\overline{\flagvar{H}^{\tau}}$ is the projective space of lines in $U_{i+1}/U_{i-1}.$ 
\begin{enumerate}
\item Consider the projection $p_{H}:\flagvar{H}\to Gr_{H}.$  Then $p_{H !}(L_{\tau_{i}})$ is canonically isomorphic to $\mathrm{R\Gamma}(\mathbb{P}^{1},\qelbar)[1].$ 
This implies that 
$$\heckefunc{H}(L_{\tau_{i}},\mcI^{\mu})\iso \mcI^{\mu}\otimes \mathrm{R}\Gamma(\mathbb{P}^{1},\qelbar)[1].$$

\item Let us describe the image of the map $\pi$ in $\eqref{ferme}$.  It  is contained in the closed subscheme $Y^{'}$ of $\Pi_{0,1}$ given by the following two conditions:
\begin{enumerate}
\item[a)] For $j\neq \nu^{-1}(i),$ $v(e_{j}^*)\in{U_{\nu(j)}}.$ 
\item[b)] For $j=\nu^{-1}(i),$ $v(e_{j}^{*})\in{U_{i+1}}.$
\end{enumerate}
Let $Y^{''}$  be the closed subscheme in $Y^{'}$ consisting of elements $v$ such that $v(e_j^{*})$ belongs to $U_{i-1}$ if $j=\nu^{-1}(i).$
Over a point of $Y^{''}$ the fibre of the map $\eqref{ferme}$ is $\mathbb{P}^{1}$ whence over a point of $Y^{'}-Y^{''}$ the fibre of the map  $\eqref{ferme}$ is a point. Thus 
$$\overset{\leftarrow}{H}_{H}(L_{\tau},\mathcal{I}^{\mu})\iso \mathrm{\IC}(Y^{'})\oplus \mathrm{\IC}(Y^{''}).$$
Now we need to identify $Y^{'}$ and $Y^{''}.$
\par\medskip
If $i+1\notin I_{\mu},$  let $I_{\mu^{'}}$ be the subset of  $\{1,\dots,m\}$  obtained from $I_{\mu}$ by throwing $i$ away and adding $i+1.$ In this case $Y^{'}$ is isomorphic to  $\overline{\Pi}_{0,1}^{\mu^{'}}$ so $\IC(Y^{'})$ is canonically isomorphic to $\mathcal{I}^{\mu^{'}}.$ 
Now let $\nu^{'}:\{1,\dots ,n\}\longrightarrow \{1,\dots, m\}$ be strictly decreasing with image $I_{\mu^{'}}.$ Considering $\nu^{'}$ as an element  of $S_{n,m}$, we get that $\Pi_{0,1}^{\nu^{'}}=\Pi_{0,1}^{\mu^{'}}.$
\par\medskip
If $i+1\in{I_{\mu}},$ let $\nu^{'}:\{1,\dots,n\}\longrightarrow \{1,\dots,m\}$ be the map $\nu$ composed with the permutation $\tau_{i}$.  The image of $\nu^{'}$ is the subset $I_{\mu}$(but $\nu^{'}$ is not strictly decreasing). Viewing $\nu^{'}$ as an element of $S_{n,m}$ enables us to identify  $Y^{'}$ with the closure of the $I_{H}\times I_{G}$-orbit $\Pi_{0,1}^{\nu^{'}}.$
\par\medskip
 Thus, in both cases $\nu^{'}=\tau_{i}\circ \nu$ is the composition $\{1,\dots,n\}\overset{\nu}{\longrightarrow }\{1,\dots , m\}\overset{\tau_{i}}{\longrightarrow}\{1,\dots,m\}$ and $\mathrm{\IC}(Y^{'})$ is isomorphic to $\mathcal{I}^{\tau_{i}\circ \nu}.$
\par\medskip
If $i>1$ and $i-1\notin I_{\mu}$ then $\tau_{i-1}\circ \nu :\{1,\dots, n\}\longrightarrow \{1,\dots ,m\}$ is strictly decreasing and $Y^{''}$ is isomorphic to the closure of $\Pi_{0,1}^{\tau_{i-1}\circ \nu}.$ So we get $\mathrm{\IC}(Y^{''})=\mathcal{I}^{\tau_{i-1}\circ \nu}.$  This proves the assertion.
\end{enumerate}

\end{proof}
 
\begin{proposition}
\label{gorthen1}
Let $i$ be an integer such that $1\leq i <m$
\par\smallskip
\begin{enumerate}
\item If  neither $i$ nor  $i+1$ is in  $I_{\mu}$ then 
$$\overset{\leftarrow}{H}_{H}(L_{\tau_{i}},\mathcal{I}^{\mu!})\iso \mathcal{I}^{\mu!}\otimes \mathrm{R}\Gamma(\mathbb{P}^{1},\qelbar)[1]$$

\item If $i$ is not in $I_{\mu}$ and  $i+1$ is in $I_{\mu}$ then the composition $\tau_{i}\circ \nu$ is again strictly decreasing and  there is a distinguished triangle
$$\mcI^{\mu!}[-1]\longrightarrow \overset{\leftarrow}{H}_{H}(L_{\tau_{i}},\mathcal{I}^{\mu!})\longrightarrow \mathcal{I}^{(\tau_{i}\circ \nu)!}\overset{+1}{\longrightarrow}$$

\item If $i$ is in $I_{\mu}$ and  $i+1$ is not an element of $I_{\mu}$ then there is a distinguished triangle 

$$\mathcal{I}^{(\tau_{i}\circ\nu)!}\longrightarrow \overset{\leftarrow}{H}_{H}(L_{\tau_{i}},\mathcal{I}^{\mu!})\longrightarrow\mathcal{I}^{\nu!}[1]\overset{+1}{\longrightarrow}.$$ 

\item If  both $i$ and $i+1$ are in $I_{\mu}$ then there is a distinguished triangle
$$\mathcal{I}^{(\tau_{i}\circ \nu)!}\longrightarrow \overset{\leftarrow}{H}_{H}(L_{\tau_{i}},\mathcal{I}^{\mu!})\longrightarrow\mathcal{I}^{\mu!}[1]\overset{+1}{\longrightarrow}.$$
\end{enumerate}
\end{proposition}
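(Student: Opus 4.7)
My strategy is to deduce all four cases from the computation of $\heckefunc{H}(L_{\tau_i!},\mathcal{I}^{\mu!})$ together with a single distinguished triangle. In $P_{I_H}(\flagvar{H})$, the open/closed decomposition $\overline{\flagvar{H}^{\tau_i}}\simeq\mathbb{P}^1=\flagvar{H}^{\tau_i}\sqcup\{e\}$ gives the triangle $L_{\tau_i!}\to L_{\tau_i}\to L_e[1]\overset{+1}{\to}$, and applying $\heckefunc{H}(\cdot,\mathcal{I}^{\mu!})$ together with the obvious $\heckefunc{H}(L_e,\mathcal{I}^{\mu!})\simeq\mathcal{I}^{\mu!}$ yields
$$
\heckefunc{H}(L_{\tau_i!},\mathcal{I}^{\mu!})\longrightarrow\heckefunc{H}(L_{\tau_i},\mathcal{I}^{\mu!})\longrightarrow\mathcal{I}^{\mu!}[1]\overset{+1}{\longrightarrow}.
$$
So it suffices to compute $\heckefunc{H}(L_{\tau_i!},\mathcal{I}^{\mu!})=\pi^0_!(\qelbar\bt L_{\tau_i!})$, where $\pi^0:\Pi_1^\mu\newtimes\flagvar{H}^{\tau_i}\to\Pi_{0,1}$ is the open restriction of the map $\pi$ from Proposition~\ref{gorthen0}.

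This is a local question about the varying line $U'_i\subset U_{i+1}/U_{i-1}$. Setting $j_0=\nu^{-1}(i+1)$ if $i+1\in I_\mu$ and $j_1=\nu^{-1}(i)$ if $i\in I_\mu$, the only $U'_i$-dependent constraints on $(v,hI_H)$ are $v(e^*_{j_0})\notin U'_i$ and $v(e^*_{j_1})\in U'_i$, together with the open condition $U'_i\neq U_i$.

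In Case~$(1)$ neither $j_0$ nor $j_1$ occurs, so $\Pi_1^\mu\newtimes\flagvar{H}^{\tau_i}\simeq\Pi_{0,1}^\mu\times\mathbb{A}^1$ with $\pi^0$ the first projection, giving $\heckefunc{H}(L_{\tau_i!},\mathcal{I}^{\mu!})\simeq\mathcal{I}^{\mu!}\otimes\mathrm{R}\Gamma_c(\mathbb{A}^1,\qelbar)[1]$; feeding this into the triangle above and using the standard relation between $\mathrm{R}\Gamma_c(\mathbb{A}^1,\qelbar)$, $\mathrm{R}\Gamma(\mathbb{P}^1,\qelbar)$ and $\qelbar$ yields the stated formula. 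In Cases~$(3)$ and~$(4)$ the $j_1$-condition, combined with $v(e^*_{j_1})\notin U_{i-1}$ and $U'_i\neq U_i$, forces $U'_i$ to be the unique line through $\bar v(e^*_{j_1})\in U_{i+1}/U_{i-1}$ and forces $v(e^*_{j_1})\notin U_i$; hence $\pi^0$ is an isomorphism onto its image, which I identify with the orbit $\Pi_{0,1}^{\tau_i\circ\nu}$ (strictly decreasing in Case~$(3)$, a valid element of $S_{n,m}$ in Case~$(4)$). Thus $\heckefunc{H}(L_{\tau_i!},\mathcal{I}^{\mu!})\simeq\mathcal{I}^{(\tau_i\circ\nu)!}$, and substitution into the triangle above directly yields the triangles of~$(3)$ and~$(4)$ (with $\mathcal{I}^{\nu!}=\mathcal{I}^{\mu!}$ in the Case~$(3)$ notation).

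Case~$(2)$ is the main obstacle. Only the $j_0$-condition is active; for $v$ with $\bar v(e^*_{j_0})\neq 0$ in $U_{i+1}/U_{i-1}$, the fibre of $\pi^0$ is $\mathbb{A}^1$ when $v(e^*_{j_0})\in U_i\setminus U_{i-1}$ (the allowed locus $U'_i\in\mathbb{P}^1\setminus\{U_i\}$) and $\mathbb{G}_m$ when $v(e^*_{j_0})\in U_{i+1}\setminus U_i$ (also excluding the line through $\bar v(e^*_{j_0})$). The image stratifies as $W_2=\Pi_{0,1}^\mu$ (open, fibre $\mathbb{G}_m$) and $W_1=\Pi_{0,1}^{\tau_i\circ\nu}$ (closed in $W_1\sqcup W_2$, fibre $\mathbb{A}^1$). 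The recollement triangle for this stratification, combined with the triangle above via the octahedron axiom, produces the triangle of~$(2)$. The most delicate points are tracking the perverse-normalization shifts in the twisted product $\qelbar\bt L_{\tau_i!}$ and verifying non-splitting of the resulting triangle, which is forced by compatibility with the IC-decomposition of Proposition~\ref{gorthen0} upon passing to perverse associated graded.
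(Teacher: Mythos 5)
Your reduction of the proposition to the computation of $\heckefunc{H}(L_{\tau_i!},\mathcal{I}^{\mu!})$, via the triangle $L_{\tau_i!}\to L_{\tau_i}\to L_{e}[1]\overset{+1}{\longrightarrow}$ and exactness of the Hecke functor in its first variable, is a legitimate alternative to the paper's argument, which instead computes $\heckefunc{H}(L_{\tau_i},\mathcal{I}^{\mu!})$ directly as (a shift of) the constant perverse sheaf on an explicit two-orbit locally closed subscheme $Y'$ and reads off each triangle from the open--closed decomposition of $Y'$. Your cases (1), (3), (4) are essentially complete: in (3) and (4) the map $\pi^0$ is indeed an isomorphism onto $\Pi_{0,1}^{\tau_i\circ\nu}$, so $\heckefunc{H}(L_{\tau_i!},\mathcal{I}^{\mu!})\iso\mathcal{I}^{(\tau_i\circ\nu)!}$ with no shift and your auxiliary triangle is literally the asserted one. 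In (1) note that the "standard relation" between $\mathrm{R}\Gamma_c(\mathbb{A}^1)$ and $\mathrm{R}\Gamma(\mathbb{P}^1)$ only gives a triangle with outer terms $\mathcal{I}^{\mu!}[-1]$ and $\mathcal{I}^{\mu!}[1]$; to get the direct sum you must add that the connecting map lies in $\Hom(\mathcal{I}^{\mu!},\mathcal{I}^{\mu!}[-1])=0$ (negative self-Exts of a perverse sheaf vanish), or simply observe that here the closed twisted product is already a product $\Pi_{0,1}^{\mu}\times\mathbb{P}^1$, which gives (1) outright.

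Case (2), however, has a genuine gap. Writing $A=\heckefunc{H}(L_{\tau_i!},\mathcal{I}^{\mu!})$ and $X=\heckefunc{H}(L_{\tau_i},\mathcal{I}^{\mu!})$, the knowledge of $A$ (through its $\mathbb{G}_m$/$\mathbb{A}^1$-fibration over $\Pi_{0,1}^{\mu}\sqcup\Pi_{0,1}^{\tau_i\circ\nu}$) together with the triangle $A\to X\to\mathcal{I}^{\mu!}[1]\overset{+1}{\longrightarrow}$ does not formally determine the triangle of (2): the octahedron only produces it once you know how the boundary map $\mathcal{I}^{\mu!}\to A$ interacts with the recollement filtration of $A$, namely that over the open stratum $\Pi_{0,1}^{\mu}$ it induces an isomorphism onto the lowest cohomology sheaf of $A$ there (the part coming from $H^1_c$ of the $\mathbb{G}_m$-fibres). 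If, say, this boundary map were zero, then $X$ would be $A\oplus\mathcal{I}^{\mu!}[1]$, whose stalks along $\Pi_{0,1}^{\mu}$ sit in three cohomological degrees, so it could not fit into the asserted triangle; hence this identification is genuinely needed and is not supplied by your sketch. (Your worry about non-splitting is beside the point: the proposition only asserts existence of a distinguished triangle.) The simplest repair is the paper's direct computation: with the full $\overline{\flagvar{H}^{\tau_i}}\cong\mathbb{P}^1$ the only $U'_i$-dependent condition is $v(e^*_{j_0})\notin U'_i$, so the fibre of $\pi$ over every point of $Y'=\Pi_{0,1}^{\mu}\sqcup\Pi_{0,1}^{\tau_i\circ\nu}$ is $\mathbb{A}^1$, whence $X\iso\IC(Y')[-1]$ (extension by zero of the shifted constant sheaf), and the triangle of (2) is then exactly the open--closed recollement triangle for $Y'$ applied to $X$ itself, with no octahedron needed.
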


\begin{proof}
\begin{enumerate}

\item
This is straightforward as in Proposition $\ref{gorthen0}.$

\item 

Let $Y^{'}$ be the locally closed subscheme of $ \Pi_{0,1}$ given by the conditions :
\begin{enumerate}
\item[a)] For $1\leq j\leq n,$ $v(e_{j}^{*})\in{U_{\nu (j)}}.$ 
\item[b)] For $j\neq\nu^{-1}(i+1),$ $v(e_{j}^{*})\notin{U_{\nu (j)-1}}.$ 
\item[c)] For $j=\nu^{-1}(i+1),$ $v(e_{j}^{*})\notin {U_{\nu( j)-2}}.$
\end{enumerate}
The scheme $Y{'}$  is the union of two $I_{H}\times I_{G}$-orbits corresponding to $\nu$ and $\tau_{i}\circ \nu.$ Moreover, we have $$\dim(\Pi_{0,1}^{\nu})=1+\dim(\Pi_{0,1}^{\tau_{i}\circ \nu}).$$ 
Hence
$$\heckefunc{H}(L_{\tau_{i}},\mcI^{\mu !})\iso \IC(Y^{'})[-1]$$
and the assertion follows. 
\item 
Let $Y^{'}$ be the scheme classifying elements $v$ in $\Pi_{0,1}$ such that :
\begin{enumerate}
\item[a)] For $j\neq\nu^{-1}(i),$ $v(e_{j}^{*})\in U_{\nu( j)}$ and $v(e_{j}^{*})\notin U_{\nu( j)-1}.$
\item[b)] For $j=\nu^{-1}(i),$ $v(e_{j}^*)\in{U_{i+1}}$ and $v(e_{j}^{*})\notin{U_{i-1}}.$ 
\end{enumerate}

Then $Y^{'}$ is the union of two orbits $\Pi_{0,1}^{\nu}$ and $\Pi_{0,1}^{\tau_{i}\circ \nu}.$ Thus $\overset{\leftarrow}{H}_{H}(L_{\tau_{i}},\mcI^{\mu !})$ is the extension by zero of $\IC(Y^{'})$ from $Y^{'}$ to $\Pi_{0,1}.$ Hence we have a distinguished triangle 
$$\mathcal{I}^{(\tau_{i}\circ \nu)!}\longrightarrow \mathrm{\IC}(Y^{'})\longrightarrow \mathcal{I}^{\nu!}[1]\overset{+1}{\longrightarrow}.$$
\item 
Let $Y^{'}$ be a locally closed subscheme $\Pi_{0,1}$ be the scheme given by the conditions:
\begin{enumerate}
\item [a)] For $\nu(j)\neq i, i+1,$  $v(e_{j}^{*})\in{U_{\nu( j)}\setminus U_{\nu (j)-1}}.$  
 \item [b)] For $j=\nu^{-1}(i),$ $v(e_{j}^{*})$ and $v(e_{j-1}^*)$ belong to $U_{i+1},$ and their classes modulo  $U_{i-1}$ form a basis of $U_{i+1}/U_{i-1}.$
 \end{enumerate}
 Then $\overset{\leftarrow}{H}_{H}(L_{\tau_{i}},\mathcal{I}^{\mu})$ is isomorphic to $\IC(Y^{'})$ extended by zero to $\Pi_{0,1}.$ The scheme $Y^{'}$ is the union of two orbits $\Pi_{0,1}^{\nu}$ and $\Pi_{0,1}^{\tau_{i}\circ \nu},$ and we have a distinguished triangle 
 $$\mathcal{I}^{(\tau_{i}\circ \nu)!}\longrightarrow \IC(Y^{'})\longrightarrow \mathcal{I}^{\nu!}[1]\overset{+1}{\longrightarrow}.$$
\end{enumerate} 
\end{proof}

\begin{proposition}
\label{grothen3}
Let $w$ be the affine simple reflection, i.e. $w=t^{\lambda}\tau$ where $\lambda=(-1,0\dots,0,1)$ and $\tau=(1,m)$ is the longest element of $W_{H}.$ 
\par\smallskip 
 \begin{enumerate}
  \item If  neither $1$ nor $m$ lies in $I_{\mu}$ then 
  $$\overset{\leftarrow}{H}_{H}(L_{w},\mathcal{I}^{\mu})\iso \mathcal{I}^{\mu!}\otimes R\Gamma(\mathbb{P}^{1},\qelbar)[1].$$
  \item If $1$ is not in $I_{\mu}$ and $m$ lies in $I_{\mu},$  let $\lambda^{'}=(-1,0,\dots,0),$ and $w'=(\lambda^{'},\tau\circ \nu)$ be an element of $X_{G}\times S_{n,m}.$ There is a distinguished triangle 
  $$\mathcal{I}^{w^{'}!}\longrightarrow \overset{\leftarrow}{H}_{H}(L_{w},\mathcal{I}^{\mu!})\longrightarrow\mathcal{I}^{\mu!}[1] \overset{+1}{\longrightarrow}.$$
  \item
  If $1$ is in $I_{\mu},$ and  $m$ is not in $I_{\mu},$  let $\lambda^{'}=(0,\dots,0,1)$, and  $w^{'}=(\lambda^{'},\tau\circ \nu)$ be an element of $X_{G}\times S_{n,m},$ then there is a distinguished triangle
  $$\mathcal{I}^{\mu!}[-1]\longrightarrow \overset{\leftarrow}{H}_{H}(L_{w},\mathcal{I}^{\mu!)}\longrightarrow \mathcal{I}^{\mu!}[1]\overset{+1}{\longrightarrow}.$$
  \item If $1$ and $m$ are both in $I_{\mu},$ let $\lambda^{'}=(-1,0,\dots,0,1)$ and  $w^{'}=(\lambda^{'},\tau\circ \nu)$ then there is a distinguished triangle 
  $$\mathcal{I}^{w^{'}!}\longrightarrow\heckefunc{H}(L_{w},\mathcal{I}^{\mu!})\longrightarrow\mathcal{I}^{\mu!}[1]\overset{+1}{\longrightarrow}.$$ 
 \end{enumerate}
\end{proposition}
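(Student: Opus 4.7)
The plan is to adapt the explicit geometric analysis already used in Propositions $\ref{gorthen0}$ and $\ref{gorthen1}$ to the affine simple reflection $w=t^{\lambda}\tau$. Since $\ell(w)=1$, the Schubert variety $\overline{\flagvar{H}^{w}}$ is isomorphic to $\mathbb{P}^{1}$ and $L_{w}$ is the constant perverse sheaf $\qelbar[1]$ on $\mathbb{P}^{1}$. Geometrically, a point $hI_{H}$ in $\overline{\flagvar{H}^{w}}$ corresponds to choosing a line inside a two-dimensional quotient involving the lattice directions labeled by $1$ and $m$ (combined with the shift $t^{\lambda}$), while $U'_{j}=hU_{j}$ remains the standard flag for $1<j<m$.

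First I would choose $N,r$ large enough (say $N=r=1$ suffices after adjusting with the shift by $\lambda$) and set up the convolution scheme $\overline{\Pi}^{\mu}_{N,r}\newtimes \overline{\flagvar{H}^{w}}$ classifying pairs $(v,hI_{H})$, with projection $\pi$ to $\Pi_{N,r}$; by definition $\heckefunc{H}(L_{w},\mcI^{\mu!})\iso \pi_{!}(\qelbar\bt L_{w})$. Then I would restrict to the open subscheme $\Pi^{\mu}_{N,r}\newtimes \flagvar{H}^{w}$ used in the construction of $\mcI^{\mu!}$, obtaining the map $\pi^{0}$ and describing its image as a locally closed subscheme $Y'\subset \Pi_{N,r}$.

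Case (1), when neither $1$ nor $m$ lies in $I_{\mu}$, is the easy case: the flag constraints defining $\overline{\Pi}^{\mu}_{N,r}$ involve neither $U'_{1}$ nor $U'_{m-1}$, so they are insensitive to the $\mathbb{P}^{1}$-parameter, and $\pi$ becomes the projection away from $\overline{\flagvar{H}^{w}}$. A K\"unneth argument yields $\mcI^{\mu!}\otimes \R\Gamma(\mathbb{P}^{1},\qelbar)[1]$, exactly as in case (1) of Proposition $\ref{gorthen0}$. In cases (2), (3), (4) I expect the image $Y'$ of $\pi^{0}$ to be the union of two $I_{H}\times I_{G}$-orbits, one of dimension $\dim \Pi^{\mu}_{N,r}+1$ (which corresponds to the predicted element $w'=(\lambda',\tau\circ\nu)\in X_{G}\times S_{n,m}$) and the original $\Pi^{\mu}_{N,r}$ itself, with $\pi^{0}$ a proper (actually an isomorphism onto its image on the generic orbit, possibly with $\mathbb{P}^{1}$-fiber over special loci). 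In each case the coweight $\lambda'$ predicted by the statement should drop out from bookkeeping: the shift by $t^{\lambda}$ creates or absorbs a power of $t$ in position $1$ or $m$ according to whether $1$ or $m$ already belongs to $I_{\mu}$, explaining why $\lambda'$ takes the three values $(-1,0,\dots,0)$, $(0,\dots,0,1)$, or $(-1,0,\dots,0,1)$. The desired distinguished triangle is then the standard open/closed localization triangle associated with the stratification of $Y'$ by these two orbits, applied to $\IC(Y')$, together with the identification of the dense open stratum as the $(w')$-orbit.

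The main obstacle will be the combinatorial verification of which orbit is hit in each case and computing the exact codimension for the closed stratum, in particular checking that the shift of $L_{w}$ together with $\dim\flagvar{H}^{w}=1$ produces the claimed triangles (as opposed to splittings as in Proposition $\ref{gorthen0}$), and confirming that $\tau\circ\nu$ gives the correct element of $S_{n,m}$ in each of the three non-trivial cases. Once the combinatorics is pinned down, the derived-category statements follow formally, by applying the adjunction triangle for the inclusion $\Pi^{(w')!}_{N,r}\hookrightarrow \overline{Y'}$ and dualizing where necessary.
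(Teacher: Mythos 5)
Your overall strategy is the same as the paper's: form the convolution scheme over $\overline{\flagvar{H}^{w}}\cong\mathbb{P}^1$, identify the image of $\pi^{0}$ as a union of two $I_H\times I_G$-orbits, and read off the triangle from the open/closed decomposition; case (1) is indeed a K\"unneth/projection argument as in Proposition~$\ref{gorthen0}$, and in cases (2) and (4) your picture is accurate (there $\pi^{0}$ is an isomorphism onto its image $Y'$, whose dense stratum is the $w'$-orbit and whose closed stratum is the $\mu$-orbit). However, your uniform recipe breaks down in case (3), and as stated it would produce the wrong triangle there. When $1\in I_{\mu}$ and $m\notin I_{\mu}$, none of the membership conditions $v(e_j^*)\in U'_{\nu(j)}$ involves the moving lattice $U'_m=t^{-1}U'_0$ (since $\nu(j)\le m-1$ for all $j$); the only interaction with the $\mathbb{P}^1$-parameter is the open condition $v(e_n^*)\notin U'_0$. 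Consequently the line $U'_0$ is a free parameter over every point of the image: $\pi^{0}$ is an $\mathbb{A}^1$-fibration (not generically an isomorphism, and with no $\mathbb{P}^1$-fibres over special loci) onto a subscheme $Y'\subset\operatorname{Hom}_{\locring}(L^*,U_{m-1}/U_{-1})$ whose \emph{dense open} stratum is the original $\mu$-orbit and whose \emph{closed} stratum is the new $w'$-orbit, the one through $v(e_n^*)=tu_m$, $v(e_j^*)=u_{\nu(j)}$ for $j<n$. So $\heckefunc{H}(L_{w},\mcI^{\mu!})\iso \IC(Y')[-1]$ extended by zero, and the localization triangle has $\mcI^{\mu!}[-1]$ as the sub-term and the $w'$-term as the quotient — the opposite ordering (and an extra homological shift) compared with your claim that "the dense open stratum is the $(w')$-orbit" and that $\pi^{0}$ is generically an isomorphism in all three nontrivial cases. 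The asymmetric shape of the triangle in case (3) of the statement (the $[-1]$ on the first term) is precisely the signal of this affine fibration.

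A secondary inaccuracy: the possibility of $\mathbb{P}^1$-fibres over special loci, which you allow for, is a feature of the computations with the IC-objects $\mcI^{\mu}$ (as in Proposition~$\ref{gorthen0}$), not of the $!$-extensions treated here; for $\mcI^{\mu!}$ the relevant map is either an isomorphism onto $Y'$ (cases (2) and (4)) or a global $\mathbb{A}^1$-fibration (case (3)). Apart from case (3), your dimension bookkeeping for $\lambda'$ and the identification of $\tau\circ\nu$ agree with the paper, so the fix is localized: redo the orbit analysis in case (3) with the observation above rather than forcing it into the pattern of cases (2) and (4).
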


\begin{proof}
Denote by $U_{-1}\subset U_{0}\subset U_{1}\subset \dots \subset U_{m}=U\subset U_{m+1}$ the standard flag of lattices in $U(F)$. Assume $m>1.$ A point $hI_{H}$ in $\overline{\flagvar{H}^{w}}$ is given by a line $U_{0}^{'}/U_{-1}$ in $U_{1}/U_{-1}.$ We set $U_{m}^{'}=t^{-1}U^{'}_{0}.$ Let $\Pi^{\mu}\newtimes \overline{\flagvar{H}^{w}}$ be the scheme classifying pairs $(v,hI_{H}),$ where $hI_{H}$ is  in  $\overline{\flagvar{H}^{w}}$ and $v$ is a map from $L^{*}$ to $U_{m+1}/U_{-1}$
verifying:
\begin{enumerate}
\item [a)] For $\nu(j)\neq m$, $v(e_{j}^{*})\in U_{\nu(j)}.$
\item [b)] For  $\nu(j)\neq 1$, $v(e_{j}^{*})\in U_{\nu(j)}-U_{\nu(j)-1}.$
\item [c)] For $m\in I_{\mu}$, $v(e_{1}^{*})\in U_{m}^{'}-U^{'}_{m-1}$.

(The condition $m\in I_{\mu}$  is equivalent to $\nu(1)=m).$
\item[d)] If $1\in I_{\mu}$ then $v(e_{n}^{*})\in U_{1}-U^{'}_{0}.$

(The condition $1\in I_{\mu}$ is equivalent to $\nu(n)=1).$
\end{enumerate}
Now let 
\begin{equation}
\label{map1m}
\pi: \Pi^{\mu}\newtimes \overline{\flagvar{H}^{w}}\longrightarrow \mathrm{Hom}_{\locring}(L^{*},U_{m+1}/U_{-1})
\end{equation}
be the projection sending a couple $(v,hI_{H})$ to $v.$  The scheme  $\Pi^{\mu}\newtimes \overline{\flagvar{H}^{w}}$ is smooth. Write $\mathrm{\IC}$ for the intersection cohomology sheaf of $\Pi^{\mu}\newtimes \overline{\flagvar{H}^{w}}$. The sheaf $\IC$ is nothing but the constant sheaf shifted to be perverse. Then 
$$\heckefunc{H}(L_{w},\mcI^{\mu!})\iso \pi_{!}(\IC).$$
We can now prove the assertions.
\begin{enumerate}
\item This is straightforward as in Proposition $\ref{gorthen0}.$
\item The space $\mathrm{Hom}_{\locring}(L^{*},U_{m+1}/U_{1})$ is an $\locring$-module on which $t$ acts trivially, hence it is a vector space. By definition $\heckefunc{H}(L_{w},\mathcal{I}^{\mu !})$ may be considered as a complex on $\mathrm{Hom}_{\locring}(L^{*},U_{m+1}/U_{1}).$ Let $Y^{'}$ be the subscheme of $\mathrm{Hom}_{\locring}(L^{*},U_{m+1}/U_{1})$ given by the conditions:
\begin{enumerate}
\item[a)] For $j>1$, $v(e_{j}^{*})\in {U_{\nu(j)}-U_{\nu(j)-1}}.$
\item[b)] The vector $v(e_{1}^{*})$ does not vanish in $U_{m+1}/U_{m-1}.$
\end{enumerate}
Then $\heckefunc{H}(L_{w},\mathcal{I}^{\mu!})$ is isomorphic to $\IC(Y)^{'}$ extended by zero to $\mathrm{Hom}_{\locring}(L^{*},U_{m+1}/U_{1}).$
 The subscheme $Y^{'}$ is the union of two $I_{H}\times I_{G}$-orbits, the closed orbit corresponds to $\Pi_{N,r}^{\mu}$ and the open orbit passes through maps $v$ given by 
  $$v(e_{n}^{*})=u_{\nu (n)},\dots,v(e_{2}^{*})=u_{\nu( 2)},v(e_{1}^{*})=t^{-1}u_{1}.$$
  The map $v$ can be written as  follows: 
  \begin{enumerate}
  \item [a)] For $j\in{I_{\mu}}-\{m\},$ $v(u_{j}^{*})=e_{\mu( j)}.$  
  \item[b)] For all other $k\neq 1,$ $v(u_{k})=0;$ and  $v(u^{*}_{1})=t^{-1}e_{1}.$ 
\end{enumerate}  
   So this open orbit corresponds to the element $w^{'}=(\lambda^{'},\tau\circ \nu)$ in $X_{G}\times S_{n,m},$ where $\lambda^{'}=(-1,0,\dots,0).$ This leads to the desired distinguished triangle. 
 \item 
  In this case  $\heckefunc{H}(L_{w},\mathcal{I}^{\mu!})$ is naturally a complex over $Hom_{\locring}(L^{*},U_{m-1}/U_{-1}).$ The space $Hom_{\locring}(L^{*},U_{m-1}/U_{-1})$ is an $\locring$-module on which $t$ acts trivially, hence is a vector space. Denote by $Y'$ the subscheme of $Hom_{\locring}(L^*,U_{m-1}/U_{-1})$ given by the conditions:
 \begin{enumerate}
 \item[a)] $v(e_{n}^{*})\in{U_{1}-U_{-1}}.$ 
 \item [b)] $v(e_{j}^{*})\in{U_{\nu( j)}-U_{\nu (j)-1}}$ for $1\leq j < n$. 
 \end{enumerate}
Then the fibres of the map $\eqref{map1m}$  identify with $\mathbb{A}^{1}.$ So $\overset{\leftarrow}{H}_{H}(L_{w},\mathcal{I}^{\mu !})$ is the sheaf $\IC(Y^{'})[-1]$ extended by zero to $Hom_{\locring}(L^{*},U_{m-1}/U_{-1}).$ The scheme $Y^{'}$ is the union of two $I_{H}\times I_{G}$-orbits, the open corresponding to $\nu$ and the closed one passing though $v$ given by 
  $$v(e_{n}^{*})=tu_{m},v(e_{n-1}^{*})=u_{\nu(n-1)},\dots ,v(e_{1}^{*})=u_{\nu (1)}. $$
  Let $w^{'}=(\lambda^{'},\tau\circ \nu)$ be in  $X_{G}\times S_{n,m}$ with $\lambda^{'}$ being equal to $(0,\dots,0,1).$ Then we have a distinguished triangle
$$\mathcal{I}^{\mu}[-1]\longrightarrow{\overset{\leftarrow}{H}_{H}(L_{w},\mathcal{I}^{\mu !})}\longrightarrow {\mathcal{I}^{w^{'}}}\overset{+1}{\longrightarrow}.$$
\item   
We have $\nu (1)=m$ and $\nu(n)=1.$ Let $Y^{'}$ be the subscheme of $Hom_{\locring}(L^{*},U_{m+1}/U_{-1})$ classifying maps $v$ satisfying the conditions:

\begin{enumerate}
\item [a)] For all $1< j< n,$  $v(e^{*}_{j})\in{U_{\nu( j)}}-U_{\nu (j)-1}.$  
\item [b)] $v(e_{n}^{*})\in{U_{1}-U_{-1}}.$ 
\item[c)] $v(e_{1}^{*})\in{U_{m+1}-U_{ m-1}}.$ 
\item[d)] $\{v(e_{n}^{*}),tv(e_{1}^{*})\}$ are linearly independent in $U_{1}/U_{-1}.$
\end{enumerate}

Then the map $\eqref{map1m}$ is an isomorphism onto $Y'.$ Hence $\overset{\leftarrow}{H}_{H}(L_{w},\mathcal{I}^{\mu !})$ identifies with $\IC(Y')$ extended by zero from $Y'$ to $Hom_{\locring}(L^{*},U_{m+1}/U_{-1}).$
The scheme $Y'$ contains the closed subscheme which  is the $I_{H}\times I_{G}$-orbit  corresponding to $\nu$. The complement of the latter scheme in $Y^{'}$ is the $I_{H}\times I_{G}$-orbit passing through $v$ given by 
$$v(e_{1}^{*})=t^{-1}u_{1}, v(e_{2})=u_{\nu(2)},\dots, v(e_{n-1}^{*})=u_{\nu(n-1)}, v(e_{n}^{*})=tu_m.$$
Let $w^{'}=(\lambda,\tau_{i}\circ \nu)$ where $\lambda=(-1,0,\dots,0,1 ).$ Then there is a distinguished triangle $$\mathcal{I}^{w^{'} !}\longrightarrow \IC(Y^{'})\longrightarrow \mathcal{I}^{\mu !}[1]\overset{+1}{\longrightarrow}.$$
\end{enumerate}
\end{proof}


\end{document}